\theoremstyle{definition}
\newtheorem{Def}{Definition}[section]
\newtheorem{ex}[Def]{Example}
\newtheorem{rem}[Def]{Remark}
\theoremstyle{plain}
\newtheorem{prop}[Def]{Proposition}
\newtheorem{thm}[Def]{Theorem}
\newtheorem*{thm*}{Theorem}
\newtheorem{lem}[Def]{Lemma}
\newtheorem{cor}[Def]{Corollary}
\newtheorem*{cor*}{Corollary}
\newtheorem*{con*}{Conjecture}
\newtheorem*{qu}{Question}
\newtheorem*{verm*}{Vermutung}
\newcommand{\Pic}{\operatorname{Pic}}
\newcommand{\Sym}{\operatorname{Sym}}
\newcommand{\Proj}{\operatorname{Proj}}
\newcommand{\codim}{\operatorname{codim}}
\newcommand{\lk}{\operatorname{lk}} 
\newcommand{\cE}{{\mathcal E}}
\newcommand{\cO}{{\mathcal O}}
\newcommand{\cV}{{\mathcal V}}
\newcommand{\D}{{\mathbb D}}
\newcommand{\B}{{\mathbb B}}
\newcommand{\G}{{\mathbb G}}
\newcommand{\C}{{\mathbb C}}
\newcommand{\R}{{\mathbb R}}
\newcommand{\pp}{\mathbb{P}}
\newcommand{\Z}{{\mathbb Z}}
\title[Real-fibered morphisms and del Pezzo surfaces]{Real-fibered morphisms of del Pezzo surfaces and conic bundles}
\author{Mario Kummer}
\address{Technische Universit\"at Dresden, Germany} 
\email{mario.kummer@tu-dresden.de}
\author{C\'edric Le Texier}
\address{Universitetet i Oslo, Norway}
\email{cedricle@math.uio.no}
\author{Matilde Manzaroli}
\address{Universitetet i Oslo, Norway}
\email{manzarom@math.uio.no}
\thanks{The research of the last two authors is funded by the Trond Mohn Stiftelse (TMS) project “Algebraic and topological cycles in complex and tropical geometry”}
\DeclareMathOperator{\Bl}{Bl}
\newcommand{\comment}[1]{}
\begin{document}
 
 \subjclass[2010]{Primary: 14P25, 14J26}

\begin{abstract}
It goes back to Ahlfors that a real algebraic curve admits a real-fibered morphism to the projective line if and only if the real part of the curve disconnects its complex part. Inspired by this result, we are interested in characterising real algebraic varieties of dimension $n$ admitting real-fibered morphisms to the $n$-dimensional projective space. We present a criterion to classify real-fibered morphisms that arise as finite surjective linear projections from an embedded variety which relies on topological linking numbers. We address special attention to real algebraic surfaces. We classify all real-fibered morphisms from real del Pezzo surfaces to the projective plane and determine which such morphisms arise as the composition of a projective embedding with a linear projection. Furthermore, we give some insights in the case of real conic bundles. 

\end{abstract}

 \maketitle
 \tableofcontents

\section{Introduction}
This work concerns the study of \textit{real-fibered} morphisms from real algebraic varieties to projective spaces of same dimension. Apart from the case of real algebraic curves, the topology of the real part of higher dimensional real algebraic varieties admitting real-fibered morphisms is bound to be a disjoint union of spheres and real projective spaces. We mainly focus on real del Pezzo surfaces and real conic bundles whose real classification is well known. The study of real algebraic varieties dates back to the 19th century. One of the first significant results was the classification of real cubic surfaces presented in \cite{Schl63}. Then, in \cite{Zeut74}, the study of real plane algebraic curves of degree $4$ and their bitangents was carried out (which is equivalent to the study of real del Pezzo surfaces of degree $2$). The first systematic study of real algebraic varieties was pursued by Harnack, Klein, Hilbert and Comessatti \cite{Harn76,Hilb02,Come13,Come14,Klei73}. In particular, Comessatti classified real rational algebraic $\mathbb{R}$-minimal surfaces. Moreover, since we can obtain any real rational surface as a sequence of real blow-ups of a real rational $ \mathbb{R} $-minimal surface, Comesatti's approach in \cite{Come14} leads to a complete classification of real del Pezzo surfaces.\\
\par
Let $X$ be a non-singular algebraic variety of dimension $n$ (by variety we will always mean an integral and separated scheme of finite type over $\R$). 
In this article, we suppose that all varieties have non-empty real part unless otherwise stated.
\begin{Def}
\label{real-fibered}
Let $X$ and $Y$ be non-singular algebraic varieties of dimension $n$. We say that a real morphism $f : X \rightarrow Y$ is \emph{real-fibered}, if {$X(\R)$ is non-empty} and $f^{-1}(Y(\mathbb{R}))=X(\mathbb{R})$.
\end{Def} 
As already mentioned, we are particularly interested in real-fibered morphisms $X\to\pp^n$ where $\pp^n$ is the scheme $\pp^n_\R=\textrm{Proj}(\R[x_0,\ldots,x_n])$. 
According to a result by Ahlfors \cite[\S4.2]{Ahlf50}, it is known which projective irreducible smooth curves $C$ admit a real-fibered morphism $C\to\pp^1$. This is the case if and only if $C$ is of \emph{Type I} or \emph{separating} in the sense that its real points $C(\mathbb{R})$ disconnect its set of complex points $C(\mathbb{C})$. Note that $C$ is separating whenever $C$ is an \emph{$M$-curve}, i.e. the number $r$ of connected components of $C(\R)$ equals $g+1$ where $g$ is the genus of $C$. On the other hand, if $C$ is separating then $r$ has the same parity as $g+1$. Separating curves and their real-fibered morphisms to $\pp^1$ have been studied by several authors, see for example \cite{gabard, cophuis, cop, ore, sep}.
While any separating curve $C$ admits real-fibered morphisms to $\pp^1$ of arbitrary large degree, the situation is much more rigid for varieties of higher dimension. This is mainly due to the fact that for any real-fibered morphism $X\to Y$ of smooth varieties the restriction to the real parts is an unramified covering map \cite[Thm.~2.19]{realfib}. Among others, this implies that for any smooth variety $X$ of dimension $n\geq2$, the topology of $X(\R)$ already determines the degree of any real-fibered morphism $X\to\pp^n$. More precisely, if $X$ is a smooth variety of dimension $n\geq2$ and $f:X\to\pp^n$ a real-fibered morphism, then $X(\R)$ is homeomorphic to a disjoint union of $s$ spheres and $r$ real projective spaces such that $\deg(f)=2s+r$ (or $X(\R)$ is empty).
However, there is no topological characterisation known, similar to Ahlfors' above result on curves of Type I, of those $n$-dimensional varieties that admit a real-fibered morphism $X\to\pp^n$. A possible approach to extend the notion of being Type I to all varieties is presented in \cite{dividingsurfaces}, where Viro introduces the definition of \textit{bound in complexification} for a smooth $n$-dimensional variety $X$: this is the case if $X(\R)$ realises the trivial element in the homology group $H_{n}(X_{\mathbb{C}}; \mathbb{Z}/2\mathbb{Z})$. In this article, we go towards a different direction and we give criteria to characterise smooth $n$-dimensional varieties admitting real-fibered morphisms $X\to\pp^n$. 
Moreover, we completely characterise \emph{del Pezzo surfaces} which admit real-fibered morphisms to $\pp^2$. 

In the following, for a non-singular algebraic variety $X$ of dimension $n$, we write \emph{real Picard group} for the Picard group of $X_{\R}$, respectively Picard group for the Picard group of $X_{\C}$. We say that a morphism of real varieties is a \emph{finite} real-fibered morphism if it is real-fibered and a finite morphism in the sense of \cite[p.~84]{Hart77}. We first present the following characterisation of {finite} real-fibered morphisms $X\to\pp^2$ from a del Pezzo surface $X$.
\begin{thm}
\label{thm : RealFibDelPezzo}
 Let $X$ be a del Pezzo surface such that each connected component of $X(\R)$ is homeomorphic to either the sphere or the real projective plane. There is a finite real-fibered morphism $X\to\pp^2$ if and only if we have one of the following:
 \begin{enumerate}
  \item $X$ has real Picard rank $1$;
  \item $X$ is a conic bundle of real Picard rank $2$;
  \item $X$ is the blow-up of one of the above surfaces at one or two real points.
 \end{enumerate}
\end{thm}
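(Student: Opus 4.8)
The plan is to analyze the two implications separately, relying on the structural fact (stated in the introduction) that a real-fibered morphism $f\colon X\to\pp^2$ restricts to an unramified covering $X(\R)\to\pp^2(\R)=\pp^2$ of degree $2s+r$, where $X(\R)$ is a disjoint union of $s$ spheres and $r$ real projective planes. First I would settle the "if" direction. For real Picard rank $1$, the anticanonical class (or an appropriate multiple) generates, and one checks that a suitable linear system gives a finite morphism to $\pp^2$ whose real locus is the expected covering; here the key input is that $X(\R)$ is connected and, by the sphere/$\pp^2$ hypothesis, is either $S^2$ (double cover case) or $\R\pp^2$ (degree-one case, i.e. $X$ rational over $\R$ with $X(\R)=\R\pp^2$). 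For a conic bundle of real Picard rank $2$, the conic bundle structure $X\to\pp^1$ composed with a real-fibered $\pp^1\to\pp^1$ does not directly land in $\pp^2$, so instead I would use the conic bundle together with a section or a degree argument to build the morphism to $\pp^2$ explicitly — concretely, a conic bundle of Picard rank $2$ with the stated real topology admits a finite real-fibered morphism to a Hirzebruch surface or directly to $\pp^2$ of degree $2$ via the relative anticanonical map. Finally, for a blow-up $\widetilde X\to X$ at one or two real points of a surface $X$ in case (1) or (2), I would pull back the morphism: blowing up a real point of the base changes the real topology by a connected sum with $\R\pp^2$ locally, and one checks the composition $\widetilde X\to X\to\pp^2$ can be modified (or that $\widetilde X$ itself maps) to a real-fibered finite morphism — this uses that blowing up one real point of $\pp^2$ still admits such a morphism, which can be checked directly on the Hirzebruch surface $\mathbb{F}_1$.

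For the "only if" direction, suppose $f\colon X\to\pp^2$ is finite real-fibered. I would first reduce to the case where $X$ is $\R$-minimal or a minimal conic bundle by running the real minimal model program: write $X$ as an iterated blow-up at real points of such a surface $X_0$, and argue that the morphism descends, so that $X_0$ again admits a finite real-fibered morphism to $\pp^2$ (the subtle point being that one may only be allowed to blow down finitely many, in fact at most two, $(-1)$-curves before the descent obstruction appears — this is where the restriction to "one or two real points" in (3) comes from). The core of the argument is then to classify which $\R$-minimal del Pezzo surfaces and minimal real conic bundles carry a finite real-fibered morphism to $\pp^2$: for these one uses Comessatti's classification together with the degree formula $\deg(f)=2s+r$ and the covering-space constraint, plus the linking-number criterion alluded to in the abstract (presumably proved earlier in the paper) to rule out the surfaces of higher real Picard rank that are not conic bundles — e.g. del Pezzo surfaces of degree $\le 7$ with $X(\R)$ a disjoint union of spheres but with real Picard rank $\ge 2$ and no conic bundle structure.

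The main obstacle, as I see it, is the "only if" direction's descent/minimality step: controlling exactly when a finite real-fibered morphism on a blow-up forces one on the blow-down, and pinning down that precisely one or two real blow-ups (and no more, and no blow-ups at non-real points or infinitely near points) are compatible with the existence of such a morphism. This requires a careful local analysis near the exceptional curves — checking that the covering $X(\R)\to\pp^2$ obtained by restriction, when $X(\R)$ contains $\R\pp^2$ summands coming from blow-ups, is incompatible with too many such summands — and it is here that the homological/linking-number machinery of the earlier sections should do the decisive work. A secondary difficulty is the explicit construction in case (2): producing the degree-$2$ finite real-fibered morphism from a Picard-rank-$2$ conic bundle to $\pp^2$ in a way that is uniform across Comessatti's list of such surfaces, rather than case-by-case.
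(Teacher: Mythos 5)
Your overall shape (necessary conditions from the covering-space structure, then a classification, then constructions) resembles the paper's at the coarsest level, but there are genuine gaps and errors in both directions. For the ``only if'' direction, the descent step is the problem: there is no reason a finite real-fibered morphism on a blow-up should descend to the blow-down, and the paper does not attempt this. Instead it works directly on each surface in Comessatti's list: for a finite real-fibered $f$ the class $D=f^*\cO_{\pp^2}(1)$ must satisfy a short list of numerical conditions ($D.D=2s+r$; $r\le D.K+4\le 2s+r$ via Harnack's inequality plus Riemann--Roch and Kodaira vanishing; $D.K\equiv r \bmod 4$ because the hyperplane-section curve is separating; $D.L>0$ for all lines $L$), and a finite search in the real Picard lattice shows that the excluded surfaces --- which are precisely the blow-ups at \emph{pairs of complex conjugate points}, e.g. $Q^{3,1}(0,2)$ or $\D_4(1,2)$ --- admit no such $D$ at all. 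Your picture of the obstruction (too many real blow-ups, detected by linking numbers) is off target: three or more real blow-ups already violate the topological hypothesis of the theorem (a Klein bottle component appears), and the linking-number machinery is used in the paper for the \emph{positive} direction, not to rule anything out.

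For the ``if'' direction there are concrete factual errors. Real Picard rank $1$ does not imply $X(\R)$ connected ($\G_2$ has four spheres, $\B_1$ has four spheres and a projective plane, both of rank one), so your dichotomy ``double cover onto $S^2$ or degree one onto $\R\pp^2$'' does not handle the rank-one case; the required degrees there are $8$ and $9$. A minimal conic bundle with $s$ spheres needs a morphism of degree $2s$ (so $4$ for $\D_4$ and $6$ for $\D_2$), not degree $2$, and the relevant divisor is $(s-2)F-K$, not the relative anticanonical class. Moreover $\Bl_p\pp^2$ at a real point has real part a Klein bottle, which is not a covering space of $\R\pp^2$, so it admits no real-fibered morphism to $\pp^2$; the blow-ups in item (3) are blow-ups of the conic bundles and of $\G_2$, turning spheres into projective planes, never of $\pp^2$ itself. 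Finally, the construction of the morphisms, which you defer, is the substantial part: the paper establishes it by exhibiting the candidate divisor with an $M$-curve hyperplane section and invoking a hyperbolicity criterion for $M$-curves together with the linking-number reduction from surfaces to their hyperplane-section curves.
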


Since the blow-up at a pair of complex conjugate points is always real-fibered, we obtain the following.

\begin{cor}
\label{cor:non-finite}
Let $X$ be a del Pezzo surface with $X(\R)\neq\emptyset$. 
There is a (possibly non-finite) real-fibered morphism $X\to\pp^2$ if and only if each connected component of $X(\R)$ is homeomorphic to either the sphere or the real projective plane.
\end{cor}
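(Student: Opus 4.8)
The forward implication holds for any smooth projective surface. If $f\colon X\to\pp^2$ is a real-fibered morphism, then by \cite[Thm.~2.19]{realfib} its restriction $f|_{X(\R)}\colon X(\R)\to\pp^2(\R)$ is an unramified covering map onto $\pp^2(\R)\cong\R\pp^2$; since $\pi_1(\R\pp^2)\cong\Z/2\Z$, every connected component of $X(\R)$ is a connected covering space of $\R\pp^2$ and hence homeomorphic either to $\R\pp^2$ (the trivial cover) or to $\sph^2$ (the universal cover).

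For the converse, suppose every connected component of $X(\R)$ is homeomorphic to $\sph^2$ or $\R\pp^2$. The plan is to reduce to Theorem~\ref{thm : RealFibDelPezzo} by establishing the following structural statement: such an $X$ is either one of the surfaces in the list (1)--(3) of that theorem, or the blow-up of such a surface at finitely many pairs of complex conjugate points. Granting this, let $\pi\colon X\to X'$ be the resulting blow-down, with $X'$ as in (1)--(3). As recalled just before the corollary, the blow-up at a pair of complex conjugate points is real-fibered, and a composition of real-fibered morphisms is again real-fibered (if $f^{-1}(Y(\R))=X(\R)$ and $g^{-1}(Z(\R))=Y(\R)$ then $(g\circ f)^{-1}(Z(\R))=X(\R)$, which is nonempty by hypothesis); hence $\pi$ is real-fibered. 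As $X'$ is a del Pezzo surface with $X'(\R)=X(\R)$, Theorem~\ref{thm : RealFibDelPezzo} provides a finite real-fibered morphism $g\colon X'\to\pp^2$, and then $g\circ\pi\colon X\to\pp^2$ is the desired (in general non-finite) real-fibered morphism.

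To prove the structural statement, I would pass to a minimal model. By Comessatti's classification of real rational surfaces \cite{Come14}, there is a birational morphism $X\to X_0$ onto an $\R$-minimal real rational surface, realised as a composition of contractions of real $(-1)$-curves and of complex conjugate pairs of disjoint $(-1)$-curves; since contracting $(-1)$-curves preserves the del Pezzo property, $X_0$ is an $\R$-minimal del Pezzo surface, hence has real Picard rank $1$ or is a conic bundle of real Picard rank $2$ --- that is, $X_0$ belongs to item (1) or (2) of the list. Let $Z_\R$ be the set of real points and $Z_{\mathrm{cc}}$ the set of complex conjugate pairs blown up to recover $X$ from $X_0$; since $X$ is del Pezzo these are distinct points in general position. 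Blowing up a real point performs a connected sum of a component of the real locus with $\R\pp^2$, while blowing up a complex conjugate pair leaves the real locus unchanged; since a connected sum with $\R\pp^2$ can never turn a torus, a Klein bottle, or a surface of higher genus into a sphere or a projective plane, the hypothesis forces every component of $X_0(\R)$ to be a sphere or a projective plane, forces each point of $Z_\R$ to lie on a sphere component of $X_0(\R)$, and forces these sphere components to be pairwise distinct. Hence $|Z_\R|$ is at most the number of sphere components of $X_0(\R)$, while $1\le K_X^2=K_{X_0}^2-|Z_\R|-2\,|Z_{\mathrm{cc}}|$ since $X$ is del Pezzo. Reading these two inequalities against the (short) list of $\R$-minimal real del Pezzo surfaces and the homeomorphism types of their real loci --- those with at least three sphere components have $K^2\le 2$, so $|Z_\R|\le K^2-1\le 1$, while the remaining ones have at most two sphere components, so again $|Z_\R|\le 2$ --- one obtains $|Z_\R|\le 2$ in every case. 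Therefore $X':=\Bl_{Z_\R}X_0$ is the blow-up of a surface of real Picard rank $1$, or of a conic bundle of real Picard rank $2$, at at most two real points, and so lies in (1)--(3), while $X=\Bl_{Z_{\mathrm{cc}}}X'$. This is the structural statement.

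The main obstacle is the bound $|Z_\R|\le 2$: this is the one place where the topological hypothesis on $X(\R)$ is used essentially, and making it rigorous amounts to running through the Comessatti classification of $\R$-minimal real del Pezzo surfaces together with the topology of their real loci (equivalently, excluding that $X$ be an iterated blow-up at three or more real points of a low-degree del Pezzo surface, which the del Pezzo property of $X$ itself forbids). The other ingredients --- the covering-space argument, the effect of a blow-up on the real locus, and the stability of the real-fibered property under composition --- are routine.
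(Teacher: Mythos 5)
Your proposal is correct and follows the same overall strategy as the paper: blow down the pairs of complex conjugate points to land on a surface covered by \Cref{thm : RealFibDelPezzo}, and compose the resulting finite real-fibered morphism with the (automatically real-fibered) blow-up. The one place you diverge is in how the structural statement is justified: the paper simply reads it off from its explicit enumeration (\Cref{tb:list} together with the list of real del Pezzo surfaces in \Cref{sec:pre}), whereas you rederive it from the $\R$-minimal model together with the connected-sum description of real blow-ups and the degree count $K_X^2=K_{X_0}^2-|Z_\R|-2|Z_{\mathrm{cc}}|\geq 1$; this is a clean, self-contained substitute for the table, at the cost of still having to run through Comessatti's short list of $\R$-minimal surfaces at the end. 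Two small remarks: for the forward implication you invoke the covering-map theorem directly on a possibly non-finite $f$, while the paper's \Cref{top_real_morphism} first restricts to the quasi-finite locus before applying it --- since that lemma is already established for arbitrary real-fibered morphisms from surfaces, you may as well cite it; and your parenthetical claim that the contraction of a real $(-1)$-curve or a conjugate pair of disjoint $(-1)$-curves again yields a del Pezzo surface is true but deserves the one-line intersection-theoretic check rather than being asserted.
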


A concept closely related to real-fibered morphisms is the notion of hyperbolic varieties.

\begin{Def}
\label{hyperbolic}
Let $X\subset\pp^N$ be an embedded variety and $E\subset\pp^N$ a linear subspace  of dimension $d=\codim(X,\pp^N)-1$ with $E\cap X=\emptyset$. Then $X$ is \emph{hyperbolic} with respect to $E$ if for all linear subspaces $E'\supset E$ of dimension $d+1$ we have that $E'\cap X$ consists only of real points. 
\end{Def}

Note that $X$ is hyperbolic with respect to $E$ if and only if the linear projection $\pi_E: X\to\pp^{\dim X}$ from center $E$ is real-fibered.
Hyperbolic embeddings of curves were studied for instance in \cite{ore, sep}. For example it was shown in \cite{sep} that any embedding of a separating curve via a complete linear system of large enough degree is hyperbolic. Hyperbolic curves also played an important role in the recent classification of maximally writhed real algebraic links \cite{maxwrith}. For higher dimensional varieties we give the following characterisation of hyperbolic varieties that allows us to reduce the problem to smaller dimensions. From now on, we will write $X(\R ) \simeq s S^k \sqcup r \R \pp^k$ in order to express that the real part is homeomorphic to the disjoint union of $s$ $k$-spheres and $r$ real projective spaces of dimension $k$.

\begin{thm}
\label{thm: alt_linking1}
Let $X \subset \mathbb{P}^n$ be a smooth variety of dimension $k\geq2$. Let $H\subset\pp^n$ be a hyperplane such that $C=X \cap H$ is a smooth $(k-1)$-variety. Assume that each connected component of $X(\mathbb{R})$ contains exactly one connected component of $C(\mathbb{R})$. Moreover, let $E\subset H$ be a linear space of dimension $n-k-1$ with $X\cap E=\emptyset$. Then the following are equivalent:
\begin{enumerate}
 \item $X$ is hyperbolic with respect to $E$.
 \item $X$ satisfies $X(\R ) \simeq s S^k \sqcup r \R \pp^k$ such that $\deg(X)=2s+r$.
	The class of each connected component that is homeomorphic to a real projective space is nontrivial in $H_{k}(\mathbb{P}^{n}(\mathbb{R}); \Z_2)$ and $C\subset H=\pp^{n-1}$ is hyperbolic with respect to $E$.
\end{enumerate}
\end{thm}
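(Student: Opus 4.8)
Throughout, I would work with the finite surjective morphism $\pi_E\colon X\to\pp^k$ given by linear projection from $E$ (it is a morphism since $X\cap E=\emptyset$, of degree $\deg X$), so that by definition $X$ is hyperbolic with respect to $E$ precisely when $\pi_E$ is real-fibered. Writing $H_0=\pi_E(H)\subset\pp^k$ for the image hyperplane, one has $\pi_E^{-1}(H_0)=X\cap H=C$ scheme-theoretically, hence $\pi_E^{*}\cO_{\pp^k}(H_0)=\cO_X(C)$ with tautological section vanishing on the reduced divisor $C$; moreover $\pi_E$ restricts on $C$ to the projection appearing in ``$C$ hyperbolic with respect to $E$ inside $H$'', and $\pi_E$ is flat by miracle flatness. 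Finally $\pi_E$ makes $\pp^n(\R)\setminus E(\R)$ a fibre bundle over $\pp^k(\R)$ with contractible fibres, so both $\pi_E$ and the inclusion $\pp^n(\R)\setminus E(\R)\hookrightarrow\pp^n(\R)$ induce isomorphisms on $H_k(\,\cdot\,;\Z_2)=\Z_2$.

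For (1)$\Rightarrow$(2): if $\pi_E$ is real-fibered, then by the result that real-fibered morphisms of smooth varieties restrict on real points to unramified coverings (\cite[Thm.~2.19]{realfib}) together with the structure statement recalled in the introduction, $X(\R)\simeq sS^k\sqcup r\R\pp^k$ with $\deg X=2s+r$. Each $\R\pp^k$-component $W$ is a connected covering of $\pp^k(\R)\simeq\R\pp^k$, hence (as $k\geq2$) a homeomorphism; pushing its fundamental class through $H_k(\pp^n(\R);\Z_2)\xleftarrow{\cong}H_k(\pp^n(\R)\setminus E(\R);\Z_2)\xrightarrow[\pi_E]{\cong}H_k(\pp^k(\R);\Z_2)$ shows $[W]\neq0$ in $H_k(\pp^n(\R);\Z_2)$. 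And $C$ is hyperbolic with respect to $E$: any $x\in C(\C)$ with $\pi_E(x)\in\pp^{k-1}(\R)=H_0(\R)\subset\pp^k(\R)$ lies in $X(\R)$ by real-fiberedness, hence in $X(\R)\cap C(\C)=C(\R)$.

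For (2)$\Rightarrow$(1), it suffices to prove that the ramification locus $R\subset X$ of $\pi_E$ satisfies $R(\R)=\emptyset$. Indeed, then $\pi_E\colon X(\R)\to\pp^k(\R)$ is a proper local homeomorphism, hence a covering, whose degree equals $2s+r=\deg X$ by the hypothesis together with the classification of connected coverings of $S^k$ and $\R\pp^k$; since the scheme-theoretic fibre of $\pi_E$ over a real point has length $\deg X$ and already contains $2s+r$ distinct real points, it is reduced and entirely real, i.e. $\pi_E$ is real-fibered. As a first input, ``$C$ hyperbolic with respect to $E$'' together with \cite[Thm.~2.19]{realfib} gives that $\pi_E\colon C(\R)\to\pp^{k-1}(\R)=H_0(\R)$ is an unramified covering of degree $\deg C=\deg X$; combined with $\pi_E^{*}\cO_{\pp^k}(H_0)=\cO_X(C)$ reduced and flatness of $\pi_E$, the fibre of $\pi_E$ over any $q\in H_0(\R)$ consists of $\deg X$ distinct real points and $\pi_E$ is étale there. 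Hence $R(\R)\cap C(\R)=\emptyset$, the image $\pi_E(R(\R))$ avoids $H_0(\R)$, and $\pi_E$ is already real-fibered over a neighbourhood of the hyperplane $H_0(\R)$.

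The remaining, and main, difficulty is to propagate real-fiberedness from this neighbourhood of $H_0(\R)$ to all of $\pp^k(\R)$ — equivalently, to exclude a nonempty $R(\R)$, which by purity of the branch locus is a divisor and would create fold walls inside $\pp^k(\R)\setminus H_0(\R)\cong\R^k$ across which two real points of a single component of $X(\R)$ collide into a conjugate pair; mod-$2$ degree considerations alone cannot see this, so a signed invariant is needed. I would handle it via the linking-number criterion established earlier in the paper: the hypotheses $\deg X=2s+r$, nontriviality of every $\R\pp^k$-component in $H_k(\pp^n(\R);\Z_2)$, and that each component of $X(\R)$ meets $C(\R)$ in exactly one component together force the topological linking number of $X(\R)$ with $E(\R)$ in $\pp^n(\R)$ to attain its maximal value $\deg X$, which by that criterion is equivalent to $\pi_E$ being real-fibered. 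Alternatively one argues componentwise: for each component $W$ of $X(\R)$ with its unique slice $C_W=W\cap H(\R)$, the covering $\pi_E\colon C_W\to H_0(\R)$ has some degree $d_W$ with $\sum_W d_W=\deg X$, and a fold/degree count for the map $\pi_E|_W$ of closed $k$-manifolds — using that $W$ is a sphere or a homologically nontrivial $\R\pp^k$ — forces $\pi_E|_W$ to be fold-free and to cover $\pp^k(\R)$ with degree $d_W$, so $R(\R)\cap W=\emptyset$.
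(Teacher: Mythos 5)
Your direction $(1)\Rightarrow(2)$ is fine and close in spirit to the paper's (the paper also deduces hyperbolicity of $C$ by restricting the pencil of $(n-k)$-planes through $E$ to $H$, and the homological claim by the same covering-space/lifting argument it uses elsewhere for \Cref{S2}). The problem is in $(2)\Rightarrow(1)$: you correctly isolate the crux --- ruling out real ramification away from $H_0(\R)$, equivalently showing that the linking numbers of the components of $X(\R)$ with $E(\R)$ sum to $\deg X$ --- but then you simply assert that ``the hypotheses \dots together force the topological linking number \dots to attain its maximal value,'' which is precisely the statement that needs proof. Worse, the list of hypotheses you claim forces this omits the one hypothesis that constrains the position of $E$ at all, namely that $C$ is hyperbolic with respect to $E$ inside $H$; without it the assertion is false (move $E$ inside $H$ and the linking numbers drop). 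The paper's proof consists exactly of the argument you skip: it takes compatible double covers $p:S^{n-1}\to H(\R)$ and $q:S^n\to\pp^n(\R)$ with $j\circ p^{-1}=q^{-1}\circ i$, notes that for a linear $(n-k)$-space $L\subset H$ containing $E$ a hemisphere $W\subset p^{-1}(L(\R))$ with boundary $p^{-1}(E(\R))$ meets $p^{-1}(C_i)$ in $|\lk(C_i,E(\R))|$ points (summing to $\deg C$ by \Cref{LinkLem} applied to the hyperbolic curve/variety $C$ in $H$), observes that these same points lie on $q^{-1}(X_i)\cap j(W)$, and concludes $\sum_i|\lk(X_i,E(\R))|\geq\deg C=\deg X$, with equality because the sum is bounded above by $\deg X$; \Cref{LinkLem} then gives hyperbolicity of $X$ directly, with no need to discuss the ramification divisor at all. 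This comparison of linking numbers computed in $H$ versus in $\pp^n$, using that each $X_i$ contains exactly one $C_i$, is the missing idea.

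Your proposed ``alternative'' componentwise argument does not close the gap either: a ``fold/degree count'' for $\pi_E|_W:W\to\pp^k(\R)$ cannot force fold-freeness (a covering-degree or mod-$2$ degree constraint is compatible with the presence of fold walls, as you yourself note when dismissing mod-$2$ arguments, and for $k$ even a signed degree to $\pp^k(\R)$ is not even defined). So as written the proof of $(2)\Rightarrow(1)$ is incomplete at its central step.
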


This allows us to characterise del Pezzo surfaces which can be embedded in some projective space as a hyperbolic variety.

\begin{thm}
\label{thm : HypDelPezzo}
 Let $X$ be a del Pezzo surface such that each connected component of $X(\R)$ is homeomorphic to either the sphere or the real projective plane. There is an embedding $X\hookrightarrow\pp^n$ such that the image is a hyperbolic variety if and only if we have one of the following:
 \begin{enumerate}
  \item $X$ has real Picard rank $1$;
  \item $X$ is a conic bundle of real Picard rank $2$;
  \item $X$ is the blow-up of one of the above surfaces at one real point.
 \end{enumerate}
\end{thm}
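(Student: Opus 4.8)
Since the statement coincides with Theorem~\ref{thm : RealFibDelPezzo} except that case (3) is restricted from one or two real points to one, the plan is to re-use the proof of Theorem~\ref{thm : RealFibDelPezzo} and to settle the two new points: why a hyperbolic embedding enforces the stronger restriction in case (3), and why the surfaces still allowed by case (3) of the present theorem do admit hyperbolic embeddings.

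\emph{Necessity.} If $X\hookrightarrow\pp^n$ has hyperbolic image with respect to a center $E$, then the linear projection $\pi_E\colon X\to\pp^2$ is a finite surjective real-fibered morphism, so by Theorem~\ref{thm : RealFibDelPezzo} the surface $X$ has real Picard rank $1$, or is a conic bundle of real Picard rank $2$, or is the blow-up of such a surface at one or two real points. Now I would use that every del Pezzo surface of real Picard rank $2$ carries a conic bundle structure (one of its two extremal contractions is not a blow-down and not the contraction to a Severi--Brauer surface, hence a conic bundle over a curve of genus $0$); consequently, if $X=\Bl_{\{p_1,p_2\}}X_0$ with $X_0$ of real Picard rank $1$, then $X=\Bl_{p_1}(\Bl_{p_2}X_0)$ is already the blow-up at one real point of the real Picard rank $2$ conic bundle $\Bl_{p_2}X_0$, so $X$ appears in case (3) of the present theorem. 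Thus the only surfaces to be excluded are the blow-ups $X=\Bl_{\{p_1,p_2\}}X_0$ of a conic bundle $X_0$ of real Picard rank $2$, which have real Picard rank $4$. For such $X$ the topological hypothesis forces $p_1,p_2$ to lie on two distinct spherical components of $X_0(\R)$, since blowing up a real point on a component $\Sigma$ replaces it by $\Sigma\#\R\pp^2$, and only $\Sigma\simeq S^2$ keeps this inside the class of spheres and real projective planes. To conclude, set $L=\cO_X(1)$; as a general hyperplane section and a hyperplane section through $E$ are linearly equivalent, $L=\pi_E^*\cO_{\pp^2}(1)$, so by the projection formula $L^2=\deg(\pi_E)$, which equals $2s+r$ when $X(\R)\simeq sS^2\sqcup r\R\pp^2$ (the real locus maps by an unramified covering onto $\pp^2(\R)$, cf.\ \cite{realfib}). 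Thus $L$ would be a very ample class with the unusually small self-intersection $L^2=2s+r$ on a del Pezzo surface $X$ of degree $K_X^2$; combining this with the Hodge index inequality $(L\cdot K_X)^2\ge L^2K_X^2$, with Riemann--Roch $h^0(L)=1+\tfrac12(L^2-L\cdot K_X)$, and with the real classification of del Pezzo surfaces of Picard rank $4$, one runs into a contradiction. The prototype is the del Pezzo surface of degree $2$ with $X(\R)\simeq 2\,\R\pp^2$, obtained from a degree-$4$ conic bundle with $X_0(\R)\simeq 2S^2$ by blowing up one point on each sphere: here $L^2=2=K_X^2$, so equality holds in the Hodge index inequality, whence $L\sim -K_X$, which is base-point free but not very ample.

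\emph{Sufficiency.} For each $X$ in cases (1)--(3) I would exhibit a hyperbolic embedding using Theorem~\ref{thm: alt_linking1} as the engine: it suffices to produce a very ample line bundle $L$ on $X$ with $L^2=2s+r$ such that a general $C\in|L|$ is a smooth curve meeting each connected component of $X(\R)$ in exactly one oval, such that $(C,L|_C)$ embeds $C$ as a hyperbolic curve in the hyperplane $H\subset\pp^{h^0(L)-1}$ cutting it out, and such that every $\R\pp^2$-component of $X(\R)$ has nontrivial class in $H_2(\pp^{h^0(L)-1}(\R);\Z_2)$; then the implication (2)$\Rightarrow$(1) of Theorem~\ref{thm: alt_linking1} shows that $\phi_L(X)$ is hyperbolic with respect to the center $E\subset H$ realising the hyperbolicity of $C$, which is disjoint from $\phi_L(X)$ because it is already disjoint from $C=\phi_L(X)\cap H$. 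For $X=\pp^2$ the identity works, and a quadric with $X(\R)\simeq S^2$ embeds into $\pp^3$ as an ovaloid, hyperbolic with respect to any interior point. In the remaining real Picard rank $1$ and conic bundle cases I would take $L$ to be a suitable small multiple of the ample generator, respectively the generator of the real Picard group for which $L^2=2s+r$, checking with the real classification that such $L$ exists and is very ample; a general $C\in|L|$ is then a separating curve, being cut out compatibly with the real structure, and one invokes the fact that complete embeddings of separating curves of sufficiently high degree are hyperbolic \cite{sep}, treating the finitely many low-degree cases by hand. For $X=\Bl_pX_0$ with $X_0$ as in (1) or (2), the point $p$ again lies on a spherical component (otherwise a Klein bottle component appears), and one takes $L=\sigma^*L_0-E$ with $L_0$ the line bundle built for $X_0$: this is very ample with $L^2=L_0^2-1=2(s_0-1)+(r_0+1)=2s+r$, and the verification of the hypotheses of Theorem~\ref{thm: alt_linking1} is inherited from that for $X_0$ together with the analysis of the new $\R\pp^2$-component carried by $E$.

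\emph{Main obstacle.} I expect the genuinely hard step to be the uniform exclusion, in the necessity part, of the two-point blow-ups of real Picard rank $2$ conic bundles: the inequalities coming from Hodge index and Riemann--Roch only cut the candidates for $L$ down to a short list, and eliminating each one requires the full real classification of del Pezzo surfaces of Picard rank $4$ together with control of which very ample classes can occur as $\pi_E^*\cO_{\pp^2}(1)$. Of comparable difficulty is the verification, in the sufficiency part, of the real-topological hypotheses of Theorem~\ref{thm: alt_linking1} for the constructed linear systems --- that a general member is separating with exactly one oval on each component and that the $\R\pp^2$-components map onto $\pp^2(\R)$ with nontrivial homology class --- which again rests on the explicit description of the real loci of del Pezzo surfaces.
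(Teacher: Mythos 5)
Your overall strategy coincides with the paper's: reduce to Theorem~\ref{thm : RealFibDelPezzo}, note that a hyperbolic embedding makes $L=\pi_E^*\cO_{\pp^2}(1)$ a very ample divisor satisfying the necessary conditions of \Cref{thm:nec}, and exclude the two-point blow-up $\mathbb{D}_4(2,0)^1_1$ because its only candidate divisor is $-K$, which is not very ample. Your Hodge-index derivation of $L\equiv -K$ there (from $L^2=2=K^2$ together with $L.K=-2$, which follows from \Cref{thm:nec}(3)) is correct and is a clean substitute for reading this off \Cref{tb:list}. However, the auxiliary claim that \emph{every} del Pezzo surface of real Picard rank $2$ carries a conic bundle structure is false: $\G_2(1,0)$ has real Picard group $\langle -K,E\rangle$ with $(-K)^2=1$, $(-K).E=1$, $E^2=-1$, an intersection form with no nonzero isotropic class, so it admits no conic bundle structure (and were your claim true, case (3) of the theorem would be subsumed by case (2)). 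You use this claim to absorb two-point blow-ups of rank-$1$ surfaces into case (3); the correct resolution is that no such surface arises at all under the hypotheses --- blowing up two real points of $\pp^2$ or $Q^{3,1}$ creates a component that is neither a sphere nor $\R\pp^2$, and two-point blow-ups of $\G_2$ or $\B_1$ have $K^2\leq 0$ --- a check your argument replaces by a wrong general statement.

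On sufficiency your engine (Theorem~\ref{thm: alt_linking1} fed with a hyperbolic hyperplane-section curve) is again the paper's, but two steps are asserted rather than proved. That ``a general $C\in|L|$ is separating, being cut out compatibly with the real structure'' is not an argument; the paper gets one oval on each component by passing the hyperplane through a chosen point of every spherical component and then invoking Harnack together with the sectional genus $s+r-1$ (\Cref{lem:alt2}), after which $C$ is an $M$-curve and hence separating. Likewise, the hyperbolicity of $C\subset\pp^{s+1}$ is not a ``sufficiently high degree'' consequence of \cite{sep}: here $\deg C=2s+r=g+s+1$ is small relative to $g$, and the paper proves hyperbolicity directly (\Cref{lem:alt3}) by decomposing the hyperplane divisor as $D_0+\sum_i P_i$ with $|D_0|$ inducing a real-fibered degree-$(s+r)$ map to $\pp^1$ and applying the nonspeciality and interlacing results of \cite{nsd,gabard,sep}. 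Similarly, very ampleness of $\sigma^*L_0-E$ in the blow-up step is not automatic (for the two-point blow-up this very recipe yields the non-very-ample $-K$) and must be checked case by case, as the paper does via \cite{veryrocco}. These gaps are all filled by material already in the paper, but as written your proposal does not close them.
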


Furthermore, we characterise these embeddings.

\begin{thm}\label{thm:main}
 Let $X\subset\pp^n$ be a smooth nondegenerate real del Pezzo surface embedded via a complete linear system. There exists a linear subspace $E\subset\pp^n$ of codimension $3$ such that $X$ is hyperbolic with respect to $E$ if and only if:
 \begin{enumerate}
  \item $X(\R ) \simeq s S^2 \sqcup r \R \pp^2$;
  \item $\deg(X)=2s+r$;  and
  \item the genus of a hyperplane section on $X$ equals $s+r-1$.
 \end{enumerate}
 In this case we further have $r\in\{0,1\}$ and $n=s+2$.
\end{thm}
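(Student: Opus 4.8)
For the del Pezzo surface $X$ with very ample hyperplane class $D$ one has $h^i(\cO_X)=0=h^i(X,D)$ for $i>0$, so Riemann--Roch gives $n+1=h^0(X,D)=1+\tfrac12(D^2-D\cdot K_X)$, while adjunction gives $2g-2=D^2+D\cdot K_X$ for the genus $g$ of a hyperplane section $C\in|D|$. Eliminating $D\cdot K_X$ yields $n=D^2-g+1$ unconditionally; hence, granting (2), condition (3) is \emph{equivalent} both to $n=s+2$ and to $D\cdot K_X=r-4$. This already disposes of the assertion $n=s+2$ once (3) is known, and it also shows that every hyperbolic embedding of a fixed $X$ has degree $2s+r$, so such an embedding is essentially unique. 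The bound $r\le1$ I would take from the classification of del Pezzo surfaces whose real locus is a disjoint union of spheres and real projective planes (by \Cref{cor:non-finite} exactly the surfaces in play): such a real locus has at most one non-orientable component. The overall plan is then to prove both implications by using \Cref{thm: alt_linking1} to pass from $X$ to a hyperplane section.

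For ($\Rightarrow$), suppose $X\subset\pp^n$ is hyperbolic with respect to a linear space $E$ of codimension $3$. Then $E\cap X=\emptyset$ and $\pi_E\colon X\to\pp^2$ is a finite real-fibered morphism of degree $D^2$; by \cite[Thm.~2.19]{realfib} its restriction to real parts is an unramified covering of $\pp^2(\R)$, so each component of $X(\R)$ is a sphere or a real projective plane and $D^2=\deg\pi_E=2s+r$, giving (1) and (2). For (3), I would choose via Bertini (the base locus $E\cap X$ being empty) a generic hyperplane $H\supset E$ making $C=X\cap H$ smooth. Since $E\subset H$ one has $C=\pi_E^{-1}(\ell)$ for the line $\ell=\pi_E(H)\subset\pp^2$; over a sphere component of $X(\R)$ the covering $\pi_E$ is the orientation double cover $S^2\to\R\pp^2$ and over a real projective plane component it is a homeomorphism, so in each case the preimage of the circle $\ell(\R)$ is a single circle. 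Thus each component of $X(\R)$ contains exactly one component of $C(\R)$, and $C(\R)$ has $s+r$ components; the hypotheses of \Cref{thm: alt_linking1} hold, and since $X$ is hyperbolic that theorem yields that $C\subset H=\pp^{n-1}$ is hyperbolic with respect to $E$ and that each real projective plane component is nontrivial in $H_2(\pp^n(\R);\Z_2)$. As $h^1(\cO_X)=0$, $C$ is embedded in $\pp^{n-1}$ by the complete linear system of $D|_C$, of degree $2s+r$, with $h^0(\cO_C(1))=n$ and $h^1(\cO_C(1))=0$. Being real-fibered over $\pp^1$, $C$ is separating with $s+r$ real components, so $g(C)=2g(C^+)+s+r-1$ for a half $C^+$, and it remains to show $g(C^+)=0$, i.e.\ that $C$ is an $M$-curve. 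This is the crux of the forward direction: it does not follow from the real topology and the separating property alone (plane curves hyperbolic via a complete system need not be $M$-curves), and one must use hyperbolicity of the \emph{surface} together with completeness of the totally real pencil cut on $C$ by the hyperplanes through $E$. Concretely I would use that the ramification curve of $\pi_E$ lies in $|K_X+3D|$ and, the real covering being unramified, has empty real part, translate this into a $\bmod 2$ intersection identity in $H_*(X(\C);\Z_2)$, and combine it with the numerics above to force $g(C)=s+r-1$; alternatively one can read this off the explicit description of finite real-fibered morphisms $X\to\pp^2$ underlying \Cref{thm : RealFibDelPezzo}. Then (3) holds, and $n=s+2$ and $r\le1$ follow as above.

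For ($\Leftarrow$), assume (1)--(3). By (1) and \Cref{cor:non-finite} there is a (possibly non-finite) real-fibered morphism $f\colon X\to\pp^2$, and by \Cref{thm : RealFibDelPezzo}/\Cref{thm : HypDelPezzo} $X$ has real Picard rank $1$, or is a conic bundle of real Picard rank $2$, or is the blow-up of one of these at a single real point; moreover, by the numerics above (2),(3) pin down $(D^2,D\cdot K_X)$, so $D$ is, up to an automorphism of $X_\R$, the unique class carrying a hyperbolic embedding, and it suffices to realise hyperbolicity for the given $|D|$. I would invoke \Cref{thm: alt_linking1} in reverse: produce a hyperplane $H\subset\pp^n$ with $C=X\cap H$ smooth meeting each component of $X(\R)$ in exactly one circle, check that the real projective plane component (when $r=1$) is nontrivial in $H_2(\pp^n(\R);\Z_2)$, and produce a codimension-$2$ subspace $E\subset H$ with $E\cap X=\emptyset$ and $C$ hyperbolic with respect to $E$; since $\dim E=(n-1)-2=n-3$, $E$ has codimension $3$ in $\pp^n$, and \Cref{thm: alt_linking1} then shows $X$ hyperbolic with respect to $E$. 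To find $H$, I would take $C=f^{-1}(\ell)$ for a generic real line $\ell$ (which meets each component of $X(\R)$ in one circle), note that by (2),(3) and Riemann--Roch the class of $C$ is a hyperplane section of the given embedding, and hence that $C$ is an $M$-curve with $s+r$ real components embedded in $\pp^{n-1}$ by a complete linear system of degree $2s+r$; then the center $E$ comes from the hyperbolicity results for separating curves embedded by complete linear systems (cf.\ \cite{sep}) after checking the numerical hypotheses. The main obstacle on this side is the construction of such an $H$ — a generic hyperplane section typically carries many ovals on each component of $X(\R)$, so $H$ must be chosen with care — and, relatedly, the verification that the resulting complete embedding of $C$ is hyperbolic; together with the homological nontriviality of the $\R\pp^2$-component and, on the forward side, the $M$-curve property of $C$, these are the points I expect to require the most work.
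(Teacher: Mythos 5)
Your skeleton---reduce to a hyperplane section via \Cref{thm: alt_linking1} and invoke the curve results of \cite{sep}---is the paper's, and you correctly isolate the hard points; but two of them are left with genuine gaps. First, the crux of the forward direction that you flag (hyperbolicity forces the sectional genus to equal $s+r-1$; equivalently $D.K=r-4$, equivalently $n\geq s+2$) is not proved. Separation of the hyperplane section plus Harnack only give $D.K\geq r-4$ and a parity condition; the reverse inequality is exactly what is missing, and the ramification-divisor sketch ($R\in|K_X+3D|$ with $R(\R)=\emptyset$) does not produce it---Riemann--Hurwitz applied to $C\to\ell$ just reproduces adjunction. The paper obtains this step from \Cref{cor:list}, which is extracted from the finite enumeration in \Cref{tb:list} of all divisor classes satisfying the necessary conditions of \Cref{thm:nec} (the decisive extra input being $\ell(D)=\tfrac12 D.(D-K)+1\geq3$ from Kodaira vanishing, with the Hodge index theorem making the search finite); nothing in your proposal substitutes for that case analysis. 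Second, your source for $r\leq1$ is a false statement: the del Pezzo surface $\mathbb{D}_4(2,0)^1_1$ has real part homeomorphic to two disjoint copies of $\R\pp^2$, so such a real locus can have two non-orientable components. The correct reason $r\leq1$ holds for \emph{embedded} hyperbolic del Pezzo surfaces is again read off \Cref{tb:list}: the unique admissible divisor on that surface is $-K$, which is not very ample (the anticanonical map is $2:1$ onto $\pp^2$), so no hyperbolic embedding with $r=2$ exists.

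The backward direction is essentially the paper's \Cref{thm:suff} (via \Cref{lem:alt2} and \Cref{lem:alt3}), but your construction of $H$ as $f^{-1}(\ell)$ presumes that the class of $f^{-1}(\ell)$ is the given hyperplane class $D$; the numerics $(D^2,D.K)$ do not pin $D$ down (on $\mathbb{D}_2$ both $F-K$ and $-F-3K$ have $D^2=6$ and $D.K=-4$), and ``unique up to automorphism'' needs the Geiser/Bertini involutions, which is more than you establish. The paper sidesteps this entirely: it chooses $H$ through one general point on each sphere component (possible since $n>s$), gets the $\R\pp^2$ component to meet $C(\R)$ for free from the homology argument of \Cref{lem:notrv}, and then uses the genus hypothesis $g=s+r-1$ together with Harnack to conclude that each component of $X(\R)$ carries exactly one circle of $C(\R)$---which is precisely the answer to your worry that a hyperplane section ``typically carries many ovals.''
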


Part $(2)$ of the Theorems \ref{thm : RealFibDelPezzo} and \ref{thm : HypDelPezzo} motivates the question of which real conic bundles $X$ (over $\pp^1$) with real Picard rank 2 admit a real-fibered morphism to $\pp^2$, respectively which ones admit a hyperbolic embedding.
In order to treat this question, we will consider those surfaces $X$ as the zero set of a section of $\mathcal{O}_{\pp (\mathcal{E})} (2)$, for $\pp (\mathcal{E} )$ a projective plane bundle over $\pp^1$.
This will allow us to construct hyperbolic conic bundles with an arbitrary large number of components homeomorphic to a sphere.
Finally, for all pairs $(s,r)$ of nonnegative integers we decide whether there exist a smooth hyperbolic surface with real part homeomorphic to $s S^2 \sqcup r \R \pp^2$, except for the case $s=2 , r \geq 2$. We provide a construction when $s\geq 3 , r\geq 0$ using the tautological embedding of the projective bundle $\pp (\mathcal{E})$. 

The paper will be organised as follows.
We start by recalling several facts and notations about real del Pezzo surfaces 
in \Cref{sec:pre}.
We then give in \Cref{sec:nec} a classification of ample divisors $D$ on real del Pezzo surfaces $X$ satisfying some necessary conditions for the associated morphism $f : X \rightarrow \pp^2$ to be finite real-fibered. In \Cref{sec:linking}, we give a criterion for a real variety $X\subset \pp^n$ of dimension $k$ to be hyperbolic with respect to a given linear subspace $E$ of dimension $n-k-1$, in terms of linking numbers (generalising the criterion for real curves given in \cite{sep}). 
This will allow us to prove Theorem \ref{thm: alt_linking1}. In the subsequent Section \ref{sec:hyppezzo} we apply this to prove the Theorems \ref{thm : RealFibDelPezzo}, \ref{thm : HypDelPezzo} and \ref{thm:main} as well as Corollary \ref{cor:non-finite}. For some examples of real del Pezzo surfaces $X$ we construct in \Cref{explicit} an explicit linear subspace $E \subset \pp^n$ of codimension 3 such that $X$ is hyperbolic with respect to $E$. 
Finally, in \Cref{conic_bundles}, we treat the case of hyperbolic minimal conic bundles and the question of which topological types are realisable as real part of a hyperbolic surface.  

\section{Preliminaries and notation}
\label{sec:pre}
Let $X$ be a real smooth irreducible surface. By a surface we always mean a projective and irreducible variety of dimension $2$. We will denote by $-K_{X}$, or simply $-K$ if there is no ambiguity, the anti-canonical class of $X$. 
\begin{Def}
\label{DP_r_minimal}
If $-K$ is ample, then $X$ is a del Pezzo surface of degree $K^{2}$. 
\end{Def}
Let $X$ be a del Pezzo surface. From the complex view point $X_{\mathbb{C}}$ is either $\pp^{2}_{\mathbb{C}}$ blown up in $r$ points in general position, where $r\leq 8$, or $\pp^{1}_{\mathbb{C}} \times \pp^{1}_{\mathbb{C}}$. In the former case one has $K^{2}=9-r$ and $K^{2}=8$ in the latter. In particular $K^{2}\leq 9$. 

\begin{Def}
Let $X$ be a smooth irreducible surface. 
\begin{enumerate}
 \item If every (real) birational morphism from $X$ into a smooth surface is an isomorphism, then we say that $X$ is \emph{minimal (over $\R$)}.
 \item If $X$ is a conic bundle of real Picard rank two, we say that $X$ is a \emph{minimal conic bundle}.
\end{enumerate}
\end{Def}

Every real del Pezzo surface is one of the following or a blow-up of one of the following at a zero dimensional real subvariety: 
\begin{itemize}
\item The projective plane $\pp^2$ which is a del Pezzo surface of degree $9$ whose real part is the real projective plane $\R\pp^2$.
\item The quadric hypersurface $Q^{n,4-n}=\cV(\sum\limits_{i=1}^{n}x_{i}^{2}-\sum\limits_{j=n+1}^{4}x_{i}^{2})$, $n\in\{0,1,2\}$, in $\pp^{3}$ which is a del Pezzo surface of degree $8$. Its real part is empty when $n=0$ and homeomorphic to the sphere $S^2$ resp. the torus $S^1\times S^1$ when $n=1$ or $n=2$ respectively.
\item The direct product $\pp^1\times C$ where $C$ is a smooth rational curve without real points. This is a del Pezzo surface of degree $8$ whose real part is empty.
\item A minimal conic bundle $\D_4$ which is a del Pezzo surface of degree $4$ whose real part is homeomorphic to a disjoint union of $2$ spheres.
\item A minimal conic bundle $\D_2$ which is a del Pezzo surface of degree $2$ whose real part is homeomorphic to a disjoint union of $3$ spheres.
\item A minimal surface $\G_2$ which is a del Pezzo surface of degree $2$ whose real part is homeomorphic to a disjoint union of $4$ spheres.
\item A minimal surface $\B_1$ which is a del Pezzo surface of degree $1$ whose real part is homeomorphic to a disjoint union of $4$ spheres and a real projective plane.
\end{itemize}

Let $X$ be a real surface such that all connected components of $X(\R)$ are homeomorphic to each other. We indicate by $X(a,2b)$ the real surface obtained by blowing up the real surface $X$ in $a$ real points and $b$ pairs of complex conjugated points. If $a=1$, the topology of $X(a,2b)(\R)$ does not depend on the real point at which we blow up. If $a=2$, we denote by $X(2,2b)^2_0$ resp. $X(2,2b)^1_1$ the surfaces obtained by blowing up two real points from the same or two points from different connected components of $X(\R)$ respectively. The case $a\geq3$ will not occur in this work.

\section{Necessary conditions for surfaces}\label{sec:nec}
We first derive some necessary conditions on the ample divisor classes on a surface that arise from a finite real-fibered morphism to $\pp^2$ (\Cref{top_real_morphism}, \Cref{thm:necsurf}). Then, we specify such conditions to the case of del Pezzo surfaces (\Cref{thm:nec}). The case of conic bundles is treated in \Cref{conic_bundles}.
\begin{lem}
\label{top_real_morphism}
 Let $X$ be a smooth irreducible surface and $f: X\to\pp^2$ a real-fibered morphism. Further let $X(\R)\neq\emptyset$. Then $f$ is generically finite to one. Let us denote its degree by $d$. 
Then $X(\R ) \simeq s S^2 \sqcup r \R \pp^2$ such that $d=r+2s$. 
The preimage of a general real line in $\pp^2$ is a smooth irreducible separating curve $C$ such that $C(\R)$ has $s+r$ connected components.
\end{lem}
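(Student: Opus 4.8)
The plan is to establish the three assertions in turn, reducing the separatedness of the hyperplane section to Ahlfors' theorem for curves.

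First I would show that $f$ is generically finite. A surface in the sense of this paper is projective, so $f$ is proper and $\overline{f(X)}$ is a closed irreducible subvariety of $\pp^2$. The key local observation is that for every real point $q\in\pp^2(\R)$ the fibre $f^{-1}(q)$ is zero-dimensional: by the real-fibered hypothesis each of its closed points lies in $X(\R)$ and hence has residue field $\R$, whereas a finite-type $\R$-scheme of positive dimension always has closed points with residue field $\C$. In particular $f$ is non-constant. If $\overline{f(X)}$ were a curve $Z$, then the generic fibre of $f\colon X\to Z$ would be $1$-dimensional, so by upper semicontinuity of fibre dimension \emph{every} fibre over $Z$ would be positive-dimensional, forcing $Z(\R)=\emptyset$, whence $X(\R)=f^{-1}(\pp^2(\R))=f^{-1}(Z(\R))=\emptyset$, contrary to hypothesis. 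Thus $\overline{f(X)}=\pp^2$, and since $\dim X=\dim\pp^2$ and we are in characteristic $0$, the morphism $f$ is dominant and generically finite; write $d=\deg f$, so that a general fibre consists of $d$ points. I expect this step to be the main obstacle — everything afterwards is essentially formal once $\cite[\text{Thm.~2.19}]{realfib}$ is in hand.

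Next I would read off the topology of $X(\R)$: by $\cite[\text{Thm.~2.19}]{realfib}$ the restriction $f_\R\colon X(\R)\to\pp^2(\R)=\R\pp^2$ is an unramified covering map, surjective since $\R\pp^2$ is connected and $X(\R)\neq\emptyset$. Its number of sheets is $d$, because over a real point $q$ outside the branch locus of $f$ (a proper subvariety of $\pp^2$, hence not containing the Zariski-dense set $\pp^2(\R)$) the fibre $f^{-1}(q)$ consists of $d$ points, all real by the argument above. As $\pi_1(\R\pp^2)=\Z/2$, the only connected finite covers of $\R\pp^2$ are $\R\pp^2$ itself (degree $1$) and $S^2$ (degree $2$, the orientation double cover); restricting $f_\R$ to each connected component of the compact surface $X(\R)$ gives a connected cover of $\R\pp^2$, so $X(\R)\simeq sS^2\sqcup r\R\pp^2$ for some $s,r$, and summing the numbers of sheets over the components yields $d=2s+r$.

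For the last statement I would take a general real line $L\subset\pp^2$ and set $C=f^{-1}(L)$. As a general member of the base-point-free linear system $|f^{*}\cO_{\pp^2}(1)|$, whose associated morphism $f$ has $2$-dimensional image, the curve $C$ is smooth and geometrically irreducible by Bertini (the loci of singular, resp.\ reducible, members form proper closed subsets of the dual plane, so are avoided by a general real $L$). Its real locus is $C(\R)=f_\R^{-1}(L(\R))$, where $L(\R)\cong\R\pp^1$ is a non-contractible loop in $\R\pp^2$ (its class generates $H_1(\R\pp^2;\Z/2)$): over an $\R\pp^2$-component of $X(\R)$ the map $f_\R$ is a homeomorphism and contributes one circle to $C(\R)$, while over an $S^2$-component it is the double cover and the preimage of a non-contractible loop under $S^2\to\R\pp^2$ is connected, again contributing one circle; hence $C(\R)$ has exactly $s+r$ components, and in particular $C(\R)\neq\emptyset$. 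Finally, restricting $f$ to $C$ gives a real morphism $g\colon C\to L\cong\pp^1$ with $g^{-1}(L(\R))=C(\R)$: the inclusion $\subseteq$ holds because $L(\R)\subset\pp^2(\R)$ and $f^{-1}(\pp^2(\R))=X(\R)$, and $\supseteq$ because $f$ sends $C(\R)\subset X(\R)$ into $\pp^2(\R)\cap L=L(\R)$. Thus $g$ is a real-fibered morphism from the smooth irreducible curve $C$ to $\pp^1$, so Ahlfors' theorem $\cite[\S4.2]{Ahlf50}$ shows that $C$ is separating — this is the one genuinely useful idea, that the preimage of a line inherits the real-fibered property.
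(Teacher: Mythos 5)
Your proof is correct and follows essentially the same route as the paper's: finiteness of fibres over real points plus semicontinuity gives generic finiteness, \cite[Thm.~2.19]{realfib} gives the unramified covering of $\R\pp^2$, covering-space theory gives $sS^2\sqcup r\R\pp^2$ with $d=2s+r$, and Bertini plus the fact that $f|_C$ is again real-fibered reduces the last claim to Ahlfors. The one small caveat is that you apply \cite[Thm.~2.19]{realfib} directly to $f$, which need not be finite; the paper first restricts to $X'=f^{-1}(U)$ over the open locus $U$ of finite fibres (where quasi-finite plus proper implies finite) before citing that theorem, and since you have already shown $X(\R)\subset X'$ and $\pp^2(\R)\subset U$, inserting that one localization step closes the gap.
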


\begin{proof}
 Since $f$ is real-fibered, the fiber $f^{-1}(f(p))$ for any $p\in X(\R)$ must be finite as any nonfinite variety has nonreal points. Since the dimension of a fiber is upper-semicontinuous \cite[Thm.~11.12]{Ha95}, this shows that $f$ is generically finite to one. Let $U\subset\pp^2$ be the open subset of all $p\in\pp^2$ such that $f^{-1}(p)$ is finite and let $X'=f^{-1}(U)$. The restriction $f':X'\to U$ of $f$ to $X'$ is \emph{quasi-finite} in the sense that each fiber is finite. Moreover, it is proper because $f$ is proper as a morphism between projective varieties. Thus $f'$ is finite by \cite[Thm.~8.11.1]{EGAIV3}. Furthermore, we have $X(\R)\subset X'$ and $\pp^2(\R)\subset U$.
 Thus by \cite[Thm.~2.19]{realfib} the map $X(\R)\to\pp^2(\R)$ obtained by restricting $f$ to the real part of $X(\R)$ is an unramified covering map. This implies that the real part $X(\R)$ is homeomorphic to the disjoint union of $s$ spheres and $r$ projective planes such that $d=r+2s$, see also \cite[Cor.~2.20]{realfib}. By Bertini's lemma \cite[Thm.~6.10]{bertini}, the preimage of a general real line in $\pp^2$ is a smooth irreducible curve $C$ with $C(\R)$ having $r+s$ connected components, one for each connected component of the covering space $X(\R)$. Furthermore, the restriction of $f$ to $C$ is again real-fibered. Thus $C$ is separating, see e.g. \cite[Thm.~2.8]{realfib}.
\end{proof}

\begin{ex}
 By the Noether--Lefschetz Theorem we can approximate the polynomial $x_1^4+x_2^4+x_3^4-x_0^4$ arbitrarily close by a homogeneous polynomial $p\in\R[x_0,x_1,x_2,x_3]$ of degree $4$ such that the zero set $X=\cV(p)\subset\pp^3$ is a smooth surface whose Picard group is generated by the class of a hyperplane section. In particular, there is no morphism $f:X\to\pp^2$ of degree $2$. Further we can choose $p$ in such a way that $X(\R)$ is homeomorphic to a sphere as this is the case for the real zero set of $x_1^4+x_2^4+x_3^4-x_0^4$. Thus by \Cref{top_real_morphism} there is no real-fibered morphism $f:X\to\pp^2$. In particular, for an abstract smooth surface $X$ the criterion $X(\R ) \simeq s S^2 \sqcup r \R \pp^2$, $r,s\in\Z_{\geq0}$, is necessary but not sufficient for admitting a real-fibered morphism  $f:X\to\pp^2$.
\end{ex}

\begin{thm}\label{thm:necsurf}
  Let $f: X\to\pp^2$ be a finite real-fibered morphism from a smooth surface $X$ and $D$ the corresponding ample divisor class. Then we have the following:
 \begin{enumerate}
  \item $X(\R ) \simeq s  S^2 \sqcup r  \R \pp^2$;
  \item $D.D=r+2s$;
  \item $r\leq D.K+4$;
  \item $D.K\equiv r \mod 4$;
  \item $D.L>0$ for all effective divisors $L\subset X_\C$.
 \end{enumerate}
\end{thm}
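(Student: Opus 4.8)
The plan is to extract each numbered item from the structural information already provided by \Cref{top_real_morphism}, together with standard intersection theory on the surface $X$. Since $f$ is finite real-fibered, in particular it is real-fibered, so \Cref{top_real_morphism} applies directly: it gives $X(\R)\simeq s S^2\sqcup r\R\pp^2$ with $d=r+2s$, where $d=\deg f$, and it tells us that the preimage $C$ of a general real line is a smooth irreducible separating curve with $s+r$ connected components. This immediately yields item (1). For item (2), the divisor class $D$ associated to $f$ is the pullback $f^*\ell$ of the class $\ell$ of a line in $\pp^2$; since $\ell^2=1$ in $\pp^2$ and $f$ has degree $d$, the projection formula gives $D.D=d\cdot(\ell.\ell)=d=r+2s$. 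For item (5), if $L\subset X_\C$ is effective then $f(L)$ is a curve in $\pp^2_\C$ (it cannot be a point, since $f$ is finite), hence $f_*L$ is a positive multiple of $\ell$; by the projection formula $D.L=f^*\ell.L=\ell.f_*L>0$.

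The substance of the theorem is in items (3) and (4), which I would obtain by applying the adjunction formula to the hyperplane section curve $C$. Writing $g$ for the genus of $C$, adjunction on $X$ gives $2g-2=C.(C+K)=D.D+D.K=(r+2s)+D.K$, so $D.K=2g-2-(r+2s)$. Now I invoke the separating-curve information: $C$ is a smooth irreducible separating curve whose real part has exactly $s+r$ connected components. By Ahlfors' theorem (recalled in the introduction), a separating curve of genus $g$ has a number of real components $\rho$ satisfying $1\le\rho\le g+1$ and $\rho\equiv g+1\pmod 2$. Applying this with $\rho=s+r$ gives $s+r\le g+1$, i.e. $g\ge s+r-1$, and $s+r\equiv g+1\pmod 2$. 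Substituting $g\ge s+r-1$ into $D.K=2g-2-(r+2s)$ yields $D.K\ge 2(s+r-1)-2-r-2s=r-4$, which is exactly item (3) rearranged. For item (4), from $g\equiv s+r+1\pmod 2$ we get $D.K=2g-2-(r+2s)\equiv -2g+r+2s\equiv r+2s\equiv r\pmod 2$; to upgrade this to a congruence mod $4$ I would use that $d=r+2s=D.D$ and the parity refinement more carefully, namely writing $D.K=2g-2-D.D$ and noting $2g-2\equiv 2(s+r)-2\cdot(\text{parity correction})$; the cleanest route is: since $s+r$ and $g+1$ have the same parity, write $g+1=s+r+2k$ for some integer $k\ge 0$, so $g=s+r-1+2k$ and hence $D.K=2g-2-(r+2s)=2(s+r-1+2k)-2-r-2s=r-4+4k$, giving both $D.K\equiv r\pmod 4$ and simultaneously re-deriving $D.K\ge r-4$.

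The main obstacle I anticipate is making the genus bound fully rigorous in the form needed for (3) and (4) simultaneously: one must be sure that the relation $g+1=s+r+2k$ with $k\in\Z_{\ge 0}$ is exactly what Ahlfors' theorem provides for the curve $C$ arising here, i.e. that $C$ is genuinely separating (not merely that $f|_C$ is real-fibered) and that its number of real ovals is precisely $s+r$ rather than merely at most that — both of which are asserted in \Cref{top_real_morphism}, so this should go through, but it is the one place where the argument genuinely uses the curve-level Ahlfors result rather than pure intersection theory. A minor secondary point to check is that $D$ is indeed base-point-free so that Bertini applies to give a \emph{smooth} section $C$ in the first place; but since $D=f^*\ell$ and $\ell$ is base-point-free on $\pp^2$, the pullback is base-point-free, and \Cref{top_real_morphism} has already done this work. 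Everything else is routine projection-formula bookkeeping.
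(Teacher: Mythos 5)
Your proposal is correct and follows essentially the same route as the paper: items (1) and (2) come from \Cref{top_real_morphism}, items (3) and (4) come from adjunction applied to the preimage of a general line combined with Harnack's bound $s+r\le g+1$ and the separating-curve parity $s+r\equiv g+1\pmod 2$, and item (5) from finiteness of $f$ (the paper simply notes that $D=f^*\cO_{\pp^2}(1)$ is ample, while you spell out the projection-formula computation, which amounts to the same thing). Your packaging of (3) and (4) together via $g+1=s+r+2k$ is a slightly cleaner bookkeeping of the same argument.
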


\begin{proof}
 Part $(1)$ and $(2)$ are part of the previous lemma. In order to prove the inequality of $(3)$, we note that by the preceding lemma the preimage of a general line in $\pp^2$ is a smooth separating curve whose real part has $r+s$ connected components. Its genus $g$ is according to the Adjunction Formula \cite[V, Prop.~1.5]{Hart77}: $$g=\frac{1}{2}(D.(D+K))+1=\frac{1}{2}(r+D.K)+s+1.$$Now Harnack's inequality says that $$r+s\leq g+1=\frac{1}{2}(r+D.K)+s+2$$which implies $r\leq D.K+4$. Since the curve is separating, we furthermore have that $g+r+s=\frac{1}{2}(r+D.K)+2s+r+1$ is odd which implies claim $(4)$. Part $(5)$ is clear because $f$ is finite and $D$ therefore ample. This remains true under a base change to $\C$.
\end{proof}

From \Cref{thm:necsurf} and the Kodaira Vanishing Theorem, one obtains the following statement for del Pezzo surfaces. The Kodaira Vanishing Theorem is used to prove the right-hand inequality in $(3)$ of \Cref{thm:nec}.
This allows us to narrow down the ample divisor classes that can possibly arise as the pull-back of a hyperplane section under a finite real-fibered morphism $X\to\pp^2$ to a finite list.

\begin{cor}\label{thm:nec}
 Let $f: X\to\pp^2$ be a finite real-fibered morphism from a del Pezzo surface $X$ and $D$ the corresponding ample divisor class. Then we have the following:
 \begin{enumerate}
  \item $X(\R ) \simeq s S^2 \sqcup r \R \pp^2$;
  \item $D.D=r+2s$;
  \item $r\leq D.K+4\leq r+2s$;
  \item $D.K\equiv r \mod 4$;
  \item $D.L>0$ for all lines $L\subset X_\C$.
 \end{enumerate}
 In particular, there are only finitely many possibilities for such $D$.
\end{cor}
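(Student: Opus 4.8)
The plan is to derive the statement directly from \Cref{thm:necsurf} by exploiting that $X$ is a del Pezzo surface, i.e.\ that $-K$ is ample. Parts $(1)$, $(2)$, the left-hand inequality in $(3)$, part $(4)$, and part $(5)$ are immediate: $(1)$, $(2)$, $(4)$ are verbatim from \Cref{thm:necsurf}, the left inequality $r\le D.K+4$ is part $(3)$ there, and $(5)$ follows from part $(5)$ of \Cref{thm:necsurf} since every line $L\subset X_\C$ is in particular an effective divisor. So the only genuinely new content is the right-hand inequality $D.K+4\le r+2s$ together with the finiteness statement. First I would record that $D.D=r+2s$ by $(2)$, so the claim to prove is simply $D.K+4\le D.D$, equivalently $D.(D-K)\ge 4$, equivalently (again by Adjunction, as in the proof of \Cref{thm:necsurf}) that the genus $g=\tfrac12 D.(D+K)+1$ of a general hyperplane-section curve $C$ satisfies $g\le s+r-1$; combined with Harnack's bound $r+s\le g+1$ this would actually pin down $g=s+r-1$, but for the corollary only the upper bound is needed.

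The key step is therefore to bound the genus of $C=f^{-1}(\text{line})$ from above using that $X$ is del Pezzo. Here is where the Kodaira Vanishing Theorem enters, exactly as the text announces. Write $C\in|D|$ with $D$ ample on the del Pezzo surface $X$. From the short exact sequence
\[
0\longrightarrow \cO_X(K_X)\longrightarrow \cO_X(K_X+D)\longrightarrow \omega_C\longrightarrow 0,
\]
(the last map being adjunction $\omega_C\cong\cO_X(K_X+D)|_C$), take cohomology. Since $-K_X$ is ample, Kodaira Vanishing gives $H^i(X,\cO_X(K_X+D))=0$ for $i>0$ whenever $D-(-K_X)=K_X+D$ is... one must be slightly careful: Kodaira Vanishing says $H^i(X,L\otimes\cO_X(K_X))=0$ for $i>0$ and $L$ ample, applied with $L=D$ this gives $H^i(X,\cO_X(K_X+D))=0$ for $i>0$. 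Also $H^i(X,\cO_X(K_X))=0$ for $i>0$ by Serre duality and $H^{2-i}(X,\cO_X)$, using $H^1(X,\cO_X)=H^2(X,\cO_X)=0$ for a rational surface. Feeding this into the long exact sequence yields $H^1(C,\omega_C)=0$ unless the connecting map forces otherwise; more cleanly, $H^0(C,\omega_C)=g$ sits in
\[
H^0(X,\cO_X(K_X+D))\twoheadrightarrow H^0(C,\omega_C)\to H^1(X,\cO_X(K_X))=0,
\]
so $g=h^0(C,\omega_C)\le h^0(X,\cO_X(K_X+D))$, and the latter I would compute (or bound) by Riemann--Roch on $X$: $\chi(\cO_X(K_X+D))=\chi(\cO_X)+\tfrac12(K_X+D).D=1+\tfrac12(D.D+D.K)$, and by the vanishing just established $h^0(X,\cO_X(K_X+D))=\chi(\cO_X(K_X+D))$ (note $h^0(X,\cO_X(K_X+D))$ could a priori be $0$ if $K_X+D$ is not effective, which only strengthens the bound). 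This gives $g\le 1+\tfrac12(D.D+D.K)$, i.e.\ $\tfrac12(D.D+D.K)+1\le 1+\tfrac12(D.D+D.K)$ — a tautology, which shows I have not yet used anything. The actual input must be that $-K_X$ is \emph{effective and nonzero} on a del Pezzo surface, so that the restriction map $H^0(X,\cO_X(D))\to H^0(X,\cO_X(D)|_C)$ combined with $h^0(\cO_X(D))-h^0(\cO_X(D-C))\cdots$ — the honest route is: the arithmetic genus of a curve in $|-K_X|$-multiple is controlled, but for a general ample $D$ one uses that $D+K_X$ is again... I would instead argue that the hyperplane section $C$ of the \emph{image under $f$} — or rather, that $g\le s+r-1$ is equivalent to $p_a(C)\le p_a$ of a plane curve of degree $d=2s+r$, namely $\binom{d-1}{2}$, which is false in general, so the del Pezzo hypothesis is essential and must enter through $-K_X$ ample giving, via Kodaira Vanishing applied to $\cO_X(D+K_X) = \cO_X(D)\otimes\omega_X$, the vanishing $H^1(X,\cO_X(D))=0$ hence $h^0(\cO_X(D))=1+\tfrac12 D.(D-K_X)=1+\tfrac12(D.D-D.K)$, while a general such $D$ maps $X$ by a degree-$2s+r$ map whose image linear system has dimension at least... and pulling back $\cO_{\pp^2}(1)$ shows $h^0(\cO_X(D))\ge 3$, giving $1+\tfrac12(D.D-D.K)\ge 3$? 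No — that gives a \emph{lower} bound on $D.D-D.K$. The correct statement is the reverse: the morphism $f$ being \emph{finite} of degree $2s+r$ onto $\pp^2$ means $D^2=2s+r$ already (part $(2)$), and the genus formula gives $2g-2=D.(D+K)$, so $g\le s+r-1 \iff D.(D+K)\le 2s+2r-4 \iff D.K \le r-2s+ (2s+r-4) = 2r-4$... let me just say: the clean equivalence is $D.K+4\le r+2s=D.D$, and since by Kodaira Vanishing $h^1(X,\cO_X(D))=0$ whence $h^0(X,\cO_X(D))=\chi(\cO_X(D))=1+\tfrac12 D.(D-K)$, and since $f$ is induced by a $3$-dimensional subsystem of $|D|$ we get $h^0(X,\cO_X(D))\ge 3$, i.e.\ $D.(D-K)\ge 4$, which is exactly the desired inequality $D.D-D.K\ge 4$. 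Wait — that is $-D.K$, not $+D.K$; so this shows $D.D + |D.K|$-type bound only if $D.K\le 0$, which holds since $-K$ is nef (indeed ample) on a del Pezzo surface and $D$ is effective, so $D.(-K)\ge 0$, i.e.\ $D.K\le 0\le r$, recovering $(4)$'s consistency but for $(3)$ we want an upper bound on $D.K$, which is $\le 0\le r\le r+2s$ once we know $D.D-D.K\ge4$ gives $D.K\le D.D-4 = r+2s-4\le r+2s$.

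Thus the chain is: (a) Kodaira Vanishing $\Rightarrow h^1(X,\cO_X(D))=0$; (b) Riemann--Roch $\Rightarrow h^0(X,\cO_X(D))=1+\tfrac12 D.(D-K)$; (c) $f$ finite onto $\pp^2$ $\Rightarrow$ the linear system defining $f$ has projective dimension $2$, so $h^0(X,\cO_X(D))\ge 3$; (d) hence $D.(D-K)\ge 4$, i.e.\ $D.K\le D.D-4 = r+2s-4$, which gives the right-hand inequality of $(3)$ (in fact with room to spare, $\le r+2s-4$). Finally, for the finiteness statement: $(2)$ fixes $D.D$ once $(1)$ fixes $r,s$; but a priori $(r,s)$ ranges over $\Z_{\ge0}^2$. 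Here one invokes that on a del Pezzo surface $X(\R)$ has only finitely many topological types (the classification recalled in \Cref{sec:pre}), so only finitely many pairs $(r,s)$ occur, and $(3)$ bounds $D.K$ into a finite range; since $D$ is an ample class on a surface with finitely generated Picard group and $\rho(X_\C)\le 10$, and $D.D$ together with $D.K$ together with $D.L>0$ for all $(-1)$-lines $L$ confine $D$ to a bounded, hence finite, subset of the lattice $\Pic(X_\C)$ (the intersection form being nondegenerate and the lines spanning, up to $K$, the relevant sublattice), we conclude there are finitely many such $D$.

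The main obstacle I anticipate is step (c)/(d): making precise that "finite real-fibered morphism $X\to\pp^2$ with pull-back $D$" forces $h^0(X,\cO_X(D))\ge 3$ and hence $D.(D-K)\ge 4$ via Kodaira Vanishing — this is where the del Pezzo hypothesis is genuinely used and where the bookkeeping between $|D|$, the subsystem defining $f$, and Riemann--Roch must be done carefully. Everything else is either quoted from \Cref{thm:necsurf} or is the finiteness argument, which is routine once one recalls the finite classification of real del Pezzo surfaces from the Preliminaries.
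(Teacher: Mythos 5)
Your final chain (a)--(d) --- Kodaira Vanishing for $\cO_X(D)$ (valid since $D-K$ is ample on a del Pezzo surface), Riemann--Roch giving $\ell(D)=\tfrac12 D.(D-K)+1$, and $\ell(D)\geq 3$ because $f$ maps to $\pp^2$, hence $D.K+4\leq D.D=r+2s$ --- is exactly the paper's argument, and the remaining parts are quoted from the preceding theorem just as in the paper. The only point to tighten is the finiteness claim: the clean way to see that fixing $D.D$ and bounding $D.K$ leaves finitely many classes is the Hodge Index Theorem (the intersection form is negative definite on $K^\perp$ since $K.K>0$), which is what the paper invokes; also note that $(r,s)$ is already determined by the fixed surface $X$, so no separate finiteness of topological types is needed.
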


\begin{proof}
In order to prove the missing inequality of $(3)$, we will compute $\ell(D)$ using the Riemann--Roch Theorem for surfaces \cite[V, Thm.~1.6]{Hart77}. For this we note that by \cite[V, Cor.~3.5]{Hart77} the arithmetic genus of $X$ is zero. Furthermore, because $D$ and $-K$ are ample, we have that $D-K$ is ample as well and thus the Kodaira Vanishing Theorem \cite[V, Rem.~7.15]{Hart77} implies that $h^i(D)=0$ for $i>0$. Therefore, we have $$\ell(D)=\frac{1}{2}D.(D-K)+1$$by Riemann--Roch. Since $D$ comes from a morphism to $\pp^2$, we have $\ell(D)\geq3$. Together with $(2)$ this implies $D.K+4\leq r+2s$. Finally, it follows from the Hodge Index Theorem \cite[Rem~1.9.1]{Hart77} and the fact that $-K$ is ample that there can be only finitely many $D$ satisfying $(2)$ and $(3)$.
\end{proof}

For each real del Pezzo surface $X$ whose real part $X(\R)$ consists of connected components that are homeomorphic to spheres and real projective planes only, we determined all divisor classes on $X$ that satisfy all requirements of \Cref{thm:nec} via a brute-force search. The result is listed in \Cref{tb:list}. In this table, we use the following notation regarding generators of the real Picard group of real del Pezzo surface:
\begin{itemize}
\item $\textrm{Pic}(\mathbb{D}_{2})=\langle-K, F \rangle$, where $F$ denotes the class of a fiber (when regarding $\D_2$ as a conic bundle $\D_2\to\pp^1$) and $-K.F=2$;
\item $\textrm{Pic}(\mathbb{D}_{2}(1,0))=\langle-K, \tilde{F}, E \rangle$, where $E$ and $\tilde{F}$ are $(-1)$-curves and any two distinct generators have intersection equal to one;
\item $\textrm{Pic}(\mathbb{G}_{2}(1,0))=\langle-K, E \rangle$, where $E$ is a $(-1)$-curve and $-K.E=1$.
\end{itemize}

In the following example, we carry this computation out for the real del Pezzo surface $\mathbb{D}_2$. The other cases can be treated analogously but we do not write down the explicit calculations here. 

\begin{ex}
 The real part of the del Pezzo surface $\mathbb{D}_2$ of degree two consists of three connected components that are homeomorphic to a sphere \cite[Cor.~4.3]{russo}. Thus we have $s=3$ and $r=0$. The complexification $(\mathbb{D}_2)_\C$ of $\mathbb{D}_2$ is the blow-up of $\mathbb{P}^2$ at seven points. Thus the Picard group of $(\mathbb{D}_2)_\C$ is the free abelian group generated by the pull-back of $\cO_{\pp^2}(1)$ and the classes of the seven exceptional divisors $E_{1},..,E_{7}$. In this basis the complex conjugation on $\Pic(\mathbb{D}_2)_\C$ is given by the matrix $$\begin{pmatrix}
                          4& 3& 1& 1& 1& 1& 1& 1\\-3& -2& -1& -1& -1& -1& -1& -1\\-1& -1& -1& 0& 0& 0& 0& 0\\-1& -1& 0& -1& 0& 0& 0& 0\\-1& -1& 0& 0& -1&      0& 0& 0\\-1& -1& 0& 0& 0& -1& 0& 0\\-1& -1& 0& 0& 0& 0& -1& 0\\-1& -1&      0& 0& 0& 0& 0& -1
                         \end{pmatrix}$$
 where the first coordinate corresponds to the pull-back of $\cO_{\pp^2}(1)$, see \cite[Exp.~2]{russo}. The real Picard group of $\mathbb{D}_2$ consists of those divisor classes of $(\mathbb{D}_2)_\C$ that are fixed under this involution. Thus it is generated by the two divisor classes $F=(1,-1,0,0,0,0,0,0)$ and $K=(-3,1,1,1,1,1,1,1)$ where the latter is the canonical divisor class. We observe that $F.F=0$, $F.K=-2$ and $K.K=2$. Assume that the divisor $D=hF-lK$ for $h,l\in\Z$ satisfies the conditions from \Cref{thm:nec}. Condition $(2)$ says that $2(2h+l)l=6$. Here are the only pairs $(h,l)$ of integers that satisfy this condition: $$(1,-3),(-1,-1),(1,1),(-1,3).$$
 Finally, condition $(5)$ applied to $L=E_6$ for instance rules out the possibility of $l<0$, so we are left with $(h,l)=(1,1)$ and $(h,l)=(-1,3)$. Alternatively, this is also implied by conditions $(3)$ and $(4)$ which show that $h+l\in\{0,2\}$.
\end{ex}

\begin{table}[h!]
\begin{tabular}{|c|c| c| c| c| c| c|c|c|}
\hline
 $X$ & degree & $s$ & $r$ & $D$ & $\ell(D)$ & $g$& very ample?& See also: \\\hline
 $\pp^2$ & $9$& $0$ & $1$  & $\cO_{\pp^2}(1)$&3& $0$&yes&\ref{sec:easy}\\\hline
 $Q^{3,1}$ & $8$& $1$ & $0$  & $\cO_{\pp^2}(1,1)$&4& $0$&yes&\ref{sec:easy}\\\hline
 $\pp^2(0,2)$ & $7$& $0$ & $1$  & --- & -& -& -& \ref{cor:non-finite},\ref{rem:non-finite} \\\hline
 $Q^{3,1}(0,2)$ & $6$& $1$ & $0$  & --- & -& -& -&\ref{cor:non-finite} ,\ref{rem:non-finite} \\\hline
 $\pp^2(0,4)$ & $5$& $0$ & $1$  & --- & -& -& -& \ref{cor:non-finite},\ref{rem:non-finite} \\\hline
 $Q^{3,1}(0,4)$ & $4$& $1$ & $0$  & --- & -& -& -&\ref{cor:non-finite},\ref{rem:non-finite}  \\\hline
 $\mathbb{D}_4$ & $4$& $2$ & $0$  & $-K$& 5 & 1&yes&\ref{cor:d4} \\\hline
 $\pp^2(0,6)$ & $3$ & $0$ & $1$ & --- & -& -& -&\ref{cor:non-finite},\ref{rem:non-finite} \\\hline
 $\mathbb{D}_4(1,0)$ & $3$& $1$ & $1$  & $-K$& 4 & 1&yes&\ref{sec:easy} \\\hline
 $\mathbb{D}_4(2,0)^1_1$ & $2$& $0$ & $2$  & $-K$& 3 & 1&no&\ref{sec:easy} \\\hline
 $Q^{3,1}(0,6)$ & $2$& $1$ & $0$  & --- & -& -& -&\ref{cor:non-finite},\ref{rem:non-finite}  \\\hline
 $\mathbb{D}_4(0,2)$ & $2$& $2$ & $0$  & --- & -& -& -&\ref{cor:non-finite},\ref{rem:non-finite}  \\\hline
 $\mathbb{D}_2$ & $2$& $3$ & $0$  & $F-K$ & 6 & 2&yes&\ref{cor:d2}, \ref{geyser}
 \\\hline
  & &  &   & $-F-3K$ & 6 & 2&yes&\ref{cor:d2}, \ref{geyser}
  \\\hline
 $\mathbb{G}_2$ & $2$& $4$ & $0$  & $-2K$ & 7 & 3&yes& \ref{cor:g2} \\\hline
 $\pp^2(0,8)$ & $1$ & $0$ & $1$ & --- & -& -& -&\ref{cor:non-finite},\ref{rem:non-finite} \\\hline
 $\mathbb{D}_4(1,2)$ & $1$& $1$ & $1$  & --- & -& -&-& \ref{cor:non-finite},\ref{rem:non-finite} \\\hline
 $\mathbb{D}_2(1,0)$ & $1$& $2$ & $1$  & $-3K-\tilde{F}+E$ & 5& 2&yes&\ref{cor:d2}, \ref{geyser}
 \\\hline
  & &  &  & $-5K-\tilde{F}-E$ & 5& 2&yes&\ref{cor:d2}, \ref{geyser}
  \\\hline
  & &  &  & $-K+\tilde{F}+E$ & 5& 2&yes&\ref{cor:d2}, \ref{geyser}
  \\\hline
  & &  &  & $-3K+\tilde{F}-E$ & 5& 2&yes&\ref{cor:d2}, \ref{geyser}
  \\\hline
 $\mathbb{G}_2(1,0)$ & $1$& $3$ & $1$  & $-2K+E$ & 6& 3&yes&\ref{cor:g21} \\\hline
  & &  & $1$  & $-4K-E$ & 6& 3&yes&\ref{cor:g21} \\\hline
 $\mathbb{B}_1$ & $1$& $4$ & $1$  & $-3K$ & 7& 4&yes&\ref{prop:b1} \\\hline
\end{tabular}
  \caption{A list of all del Pezzo surfaces $X$ whose real part consists of spheres and real projective planes together with all divisor classes $D$ that satisfy the conditions of \Cref{thm:nec}. The $7$th column keeps track of the genus of the divisor $D$. In the last column, the references are \Cref{sec:easy}, \Cref{cor:non-finite}, \Cref{rem:non-finite}, \Cref{cor:d4}, \Cref{cor:d2}, \Cref{cor:g21} and \Cref{prop:b1}.}\label{tb:list}
\end{table}

In the next section we will prove that in fact all these divisors come from a real-fibered morphism $f:X\to\pp^2$ (and a hyperbolic embedding of $X$ in the very ample cases). The last column of \Cref{tb:list} indicates where a detailed treatment of these divisors can be found. Note that for determining which divisors are very ample we can employ \cite{veryrocco}.
 For now we extract from \Cref{tb:list} the following.

\begin{cor}\label{cor:list}
 Let $X$ be a smooth del Pezzo surface such that $X(\R ) \simeq s S^2 \sqcup r \R \pp^2$. 
 If there is a finite and real-fibered morphism $f:X\to\pp^2$, then the preimage of a generic real line is an $M$-curve, i.e., has genus $r+s-1$. Furthermore, the space of global sections of $f^*\cO_{\pp^2}(1)$ has dimension $s+3$ and if $f^*\cO_{\pp^2}(1)$ is very ample, then $r\in\{0,1\}$.
\end{cor}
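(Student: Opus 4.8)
The plan is to deduce everything from \Cref{thm:nec} by pinning down the single intersection number $D.K$, where we write $D=f^{*}\cO_{\pp^2}(1)$. Recall that, by \Cref{thm:nec}, this $D$ satisfies conditions $(1)$--$(5)$; moreover $D$ is ample (since $f$ is finite) and $-K$ is ample (since $X$ is del Pezzo), so $D.K=-(-K).D<0$, i.e.\ $D.K\leq-1$, because the intersection product of two ample classes on a surface is positive. The claim I want is that in fact $D.K=r-4$, that is, the left-hand inequality in \Cref{thm:nec}$(3)$ is always an equality.

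To prove this I would argue purely arithmetically. The integer $D.K$ satisfies $r-4\leq D.K\leq-1$ and $D.K\equiv r\pmod 4$. The two bounds already force $r\leq 3$, so the set of integers in $[r-4,-1]$ consists of at most four consecutive integers and hence contains at most one integer congruent to $r$ modulo $4$; since $r-4$ itself is such an integer, we conclude $D.K=r-4$. Substituting $D^2=r+2s$ (from \Cref{thm:nec}$(2)$) and $D.K=r-4$ into the Riemann--Roch computation $\ell(D)=\tfrac12 D.(D-K)+1$ already carried out in the proof of \Cref{thm:nec} gives $\ell(D)=s+3$, and substituting into the Adjunction computation $g=\tfrac12 D.(D+K)+1$ from the proof of \Cref{thm:necsurf} gives that the preimage $C$ of a general real line has genus $r+s-1$. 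By \Cref{top_real_morphism}, $C$ is a separating curve whose real part has $r+s$ connected components, and since $r+s=g+1$ this curve is an $M$-curve.

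For the last assertion I would suppose $D$ very ample and exclude $r\geq 2$ using the Hodge index theorem for the ample class $-K$: it yields $(D.K)^2\geq D^2K^2$, that is $(r-4)^2\geq(r+2s)K^2\geq r+2s$, using $K^2\geq 1$. For $r=3$ this reads $1\geq 3+2s$, which is impossible, so $r\leq 2$ holds unconditionally. If $r=2$, the same inequality reads $4\geq(2+2s)K^2$, which forces $s\leq 1$, hence $\ell(D)=s+3\in\{3,4\}$. If $\ell(D)=3$, a very ample $D$ realises $X$ as a closed surface in $\pp^2$, forcing $X\cong\pp^2$ and so $X(\R)\simeq\R\pp^2$, contradicting $r=2$. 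If $\ell(D)=4$ (so $s=1$ and $D^2=4$), a very ample $D$ realises $X$ as a smooth quartic surface in $\pp^3$, which has trivial canonical class and therefore cannot be del Pezzo. Either way $r=2$ is excluded, so $r\in\{0,1\}$.

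I expect the only place requiring any care to be this last paragraph, where one has to turn the hypothesis ``$D$ very ample'' into an explicit low-dimensional projective model of $X$ and then contradict the del Pezzo assumption; the rest is a short, uniform deduction from \Cref{thm:nec}, \Cref{top_real_morphism}, and the Hodge index theorem, and in particular avoids any case-by-case appeal to \Cref{tb:list}.
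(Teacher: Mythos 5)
Your argument is correct, and it takes a genuinely different route from the paper: the paper obtains \Cref{cor:list} by reading the statement off from the brute-force enumeration of admissible divisor classes recorded in \Cref{tb:list}, whereas you derive it uniformly from \Cref{thm:nec}. The key step you add --- that ampleness of $D$ and $-K$ forces $D.K\leq -1$, which together with the Harnack bound $r-4\leq D.K$ and the congruence $D.K\equiv r\pmod{4}$ pins down $D.K=r-4$ (the interval $[r-4,-1]$ contains only the $4-r\leq 4$ consecutive integers $r-4,\dots,-1$, of which only $r-4$ is congruent to $r$ modulo $4$) --- is sound, and it shows that the left-hand inequality in \Cref{thm:nec}(3) is in fact always an equality. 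The genus and $\ell(D)=s+3$ claims then follow by substituting $D^2=r+2s$ and $D.K=r-4$ into the adjunction and Riemann--Roch computations already carried out in the proofs of \Cref{thm:necsurf} and \Cref{thm:nec}, and \Cref{top_real_morphism} supplies the $r+s$ real components needed to conclude that $C$ is an $M$-curve. Your handling of the very-ampleness claim via the Hodge index inequality $(D.K)^2\geq D^2K^2$ together with $K^2\geq 1$ is also correct: it excludes $r=3$ outright and reduces $r=2$ to the two cases $\ell(D)\in\{3,4\}$, which you rule out by the correct observations that a surface embedded in $\pp^2$ has degree $1\neq 2+2s$ and that a smooth quartic in $\pp^3$ has trivial canonical class. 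What your approach buys is a short, conceptual proof independent of the case analysis (and, as a bonus, the unconditional bound $r\leq 2$); what the paper's table-based approach buys is the explicit list of divisors, which is needed anyway for the constructions in the subsequent sections.
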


\begin{rem}\label{sec:easy}
In some cases it is rather easy to verify that a morphism is real-fibered. For example, it is clear that any real automorphism $\pp^2\to\pp^2$ is real-fibered. Denote by $\mathbb{D}_4(2,0)^1_1$ the blow-up of $\mathbb{D}_{4}$ at two points belonging to different connected components. The anti-canonical map $\mathbb{D}_4(2,0)^1_1\to\pp^2$ is a double cover of $\pp^2$ ramified along a plane quartic curve without real points. This is clearly real-fibered.
Finally, the hypersurfaces $Q^{3,1}$ and $\mathbb{D}_4(1,0)$ in $\pp^3$ are hyperbolic with respect to any point in the interior of the $2$-sphere as each $2$-sphere disconnects $\pp^{3}(\mathbb{R})$, and the intersection of any $\pp^{1}(\mathbb{R})$ with any $\pp^{2}(\mathbb{R})$ is odd (see also \cite[Theorem 5.2]{HV07}).
\end{rem}

\section{Linking lemma and varieties of higher dimension}
\label{sec:linking}
In this section, we give a criterion to determine whether an embedded variety of dimension $k$ is hyperbolic in terms of linking numbers.

\begin{Def} [{\cite[\S 2.5]{prasolov}}]
Let $X,Y$ be disjoint embedded oriented spheres in $S^n$ of dimensions $l$ and $m$ respectively, where $n=l+m+1$.
Consider the fundamental cycles $[X]$ and $[Y]$ as cycles in the integral homology of $S^n$.
There exists a chain $W$ whose boundary is $[X]$.
The \emph{linking number} $\lk (X,Y)$ is defined to be the intersection number of $W$ and $[Y]$. 
\end{Def}

\begin{rem}
For $l$ and $m$ non zero, the linking number does not depend on the choice of $W$.
Indeed, given another chain $W'$ satisfying $\partial W' = [X]$, the chain $W' - W$ is a cycle, and hence the boundary of a chain $e$. 
Then the intersection number between $W' -W$ and $[Y]$ is the intersection number between $\partial e$ and $[Y]$, which is equal to the intersection number between $e$ and $\partial [Y] = 0$, hence it is equal to zero.
If one of $l,m$ is zero (say $m$), then the linking number must be computed as the intersection number of a 1-dimensional chain $W$ with boundary $[Y]$ with the cycle $[X]$ (and it will not depend on the choice of $W$ in that case), as otherwise the intersection number would depend on the choice of chain $W$ with boundary $[X]$.
\end{rem}

We extend the definition of linking numbers to spheres and linear subspaces inside a real projective space, following the idea in \cite[\S 2]{sep} of linking numbers of 1-dimensional spheres and linear subspaces inside a real projective space.

\begin{Def}
Let $K \subset \mathbb{P}^n (\mathbb{R} )$ be an embedded $k$-sphere in $\pp^n (\mathbb{R} $) and $L \subset \mathbb{P}^n (\mathbb{R} )$ be a linear subspace of dimension $n-k-1$.
Let $p : S^n \rightarrow \mathbb{P}^n (\mathbb{R} )$ be an unramified double cover.
The \emph{linking number} $\lk (K,L)$ is defined as the linking number of $K_1 \sqcup K_2$ with $p^{-1} (L)$ in $S^{n}$, where $K_1 \sqcup K_2$ is the preimage of $K$ via $p$ (one of the $K_{i}$ may be empty).
\end{Def}
The following proposition is a generalisation of \cite[Prop.~2.12]{sep} to the case of projective subvarieties of any dimension.
\begin{prop}
\label{LinkLem}
Let $X \subset \mathbb{P}^n$ be a smooth subvariety of dimension $k$ and degree $2s+r$ such that the set of real points $X(\R)$ is homeomorphic to $s S^k \sqcup r \R \pp^k$.
Let $E \subset \mathbb{P}^n$ be a linear subspace of 
dimension $n-k-1$ with $X\cap E = \emptyset$.
Let $X_1,\ldots, X_{r+s}$ be the connected components of $X(\R )$.
Then $X$ is hyperbolic with respect to $E$ if and only if 
\[ \sum\limits_{i=1}^{r+s} |\lk (X_i , E (\R ))| = 2s+r . \]
\end{prop}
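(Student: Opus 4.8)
The plan is to mimic the strategy of \cite[Prop.~2.12]{sep} for curves, passing to a double cover and counting intersection points of a generic $(n-k)$-dimensional linear subspace $E'\supset E$ with $X$, organised by the connected components of $X(\mathbb{R})$. First I would fix an unramified double cover $p\colon S^n\to\mathbb{P}^n(\mathbb{R})$ and recall that $p^{-1}(E(\mathbb{R}))$ is either a single $(n-k-1)$-sphere (when $E(\mathbb{R})$ is nontrivial in $H_{n-k-1}(\mathbb{P}^n(\mathbb{R});\mathbb{Z}_2)$) or a disjoint union of two such spheres; in $S^n$ the complement of a generic such configuration is controlled by linking numbers with $k$-spheres. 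For each component $X_i$ of $X(\mathbb{R})$, its preimage $p^{-1}(X_i)$ in $S^n$ is either one $k$-sphere (if $X_i\cong\mathbb{RP}^k$ with nontrivial class) or two $k$-spheres (if $X_i\cong S^k$, or if $X_i\cong\mathbb{RP}^k$ with trivial class — but such $X_i$ cannot occur for a hyperbolic $X$, as in \Cref{thm: alt_linking1}). The key bookkeeping identity to establish is: for a generic linear $E'$ with $E\subset E'\subset\mathbb{P}^n$ of dimension $n-k$, the number of real intersection points of $E'$ with the component $X_i$ has the same parity as, and is bounded below by, $|\lk(X_i,E(\mathbb{R}))|$; and $2\cdot\#(E'\cap X_i)_{\mathbb{R}}$ equals the intersection number of $p^{-1}(E'(\mathbb{R}))$ with $p^{-1}(X_i)$ counted with signs after fixing orientations, whose absolute value is $2|\lk(X_i,E(\mathbb{R}))|$ (the factor $2$ coming from the double cover).

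Next I would sum over $i$. By Bézout, the total number of complex intersection points $E'\cap X$, counted with multiplicity, is $\deg(X)=2s+r$ whenever $E'$ is chosen generically so that the intersection is transverse and finite (which is possible since $X\cap E=\emptyset$ means the projection $\pi_E$ is a finite morphism). Complex intersection points come in conjugate pairs, so the number of real ones has the same parity as $2s+r$ and is at most $2s+r$. Now $X$ is hyperbolic with respect to $E$ precisely when for \emph{every} such $E'$, all $2s+r$ intersection points are real. On the other hand, $\sum_i\#(E'\cap X_i)_{\mathbb{R}}\ge\sum_i|\lk(X_i,E(\mathbb{R}))|$ for every generic $E'$, and a standard argument (moving $E'$ and using that linking numbers are homological invariants, exactly as in the curve case) shows that the lower bound $\sum_i|\lk(X_i,E(\mathbb{R}))|$ is actually \emph{achieved} by some generic $E'$: the number of real points in $E'\cap X_i$ can always be reduced, by an isotopy of $E'$, down to $|\lk(X_i,E(\mathbb{R}))|$ without affecting the count on the other components, because the excess real intersection points can be cancelled in conjugate-type pairs. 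Combining: $X$ is hyperbolic with respect to $E$ iff $\min_{E'}\sum_i\#(E'\cap X_i)_{\mathbb{R}}=2s+r$ iff $\sum_i|\lk(X_i,E(\mathbb{R}))|=2s+r$, since the minimum over generic $E'$ of the left side is $\sum_i|\lk(X_i,E(\mathbb{R}))|$ and the maximum is at most $2s+r$.

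I would also record the inequality direction that is cleaner: one inclusion ($\Leftarrow$) is immediate once we know $\#(E'\cap X_i)_{\mathbb{R}}\ge|\lk(X_i,E(\mathbb{R}))|$ together with $\sum_i\#(E'\cap X_i)_{\mathbb{R}}\le 2s+r$: if the linking sum equals $2s+r$, then equality is forced everywhere and for every $E'$, so all intersection points are real, i.e.\ $X$ is hyperbolic. For the converse ($\Rightarrow$) one uses that hyperbolicity forces $\#(E'\cap X_i)_{\mathbb{R}}$ to be constant (equal to the generic complex intersection number of $E'$ with $X_i$, summing to $2s+r$) and that this constant value must then equal $|\lk(X_i,E(\mathbb{R}))|$ because, being realised for a generic $E'$ that can be isotoped to the minimising configuration, the minimum and the constant value coincide. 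The main obstacle I anticipate is making the double-cover orientation bookkeeping precise — i.e.\ justifying the factor-of-two relation between $\lk(X_i,E(\mathbb{R}))$ in $\mathbb{P}^n(\mathbb{R})$ and the honest linking number of the lifted spheres in $S^n$, and handling uniformly the four cases (component a sphere vs.\ projective space; $E(\mathbb{R})$ lifting to one vs.\ two spheres), including the degenerate cases where one lifted sphere is empty and linking numbers must be computed via a chain bounding the \emph{other} cycle, as flagged in the remark following the definition. Once that dictionary is set up, the rest is the same Bézout-plus-parity argument as for curves.
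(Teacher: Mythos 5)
Your overall skeleton is the one the paper uses: pass to the double cover $p\colon S^n\to\pp^n(\R)$, interpret $|\lk(X_i,E(\R))|$ as a lower bound for the number of real points of $L\cap X_i$ for every $(n-k)$-plane $L\supset E$, and play this off against the B\'ezout upper bound $\deg X=2s+r$. Your $(\Leftarrow)$ direction is exactly the paper's argument and is fine. However, two specific steps in your write-up are wrong as stated. First, the bookkeeping identity ``$2\cdot\#(E'\cap X_i)_{\R}$ equals the signed intersection number of $p^{-1}(E'(\R))$ with $p^{-1}(X_i)$, whose absolute value is $2|\lk(X_i,E(\R))|$'' cannot be right: $p^{-1}(E'(\R))$ and $p^{-1}(X_i)$ are cycles of complementary dimension in $S^n$, so their homological intersection number is $0$. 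The correct set-up (and the one the paper uses) replaces $p^{-1}(E'(\R))$ by one of the two hemispheres $W\subset p^{-1}(E'(\R))$ bounded by the equator $p^{-1}(E(\R))$; each real point of $E'\cap X_i$ has exactly one of its two lifts on $W$, so the geometric count $\#(W\cap p^{-1}(X_i))$ equals $\#(E'\cap X_i)_{\R}$ and the signed count is $\lk(X_i,E(\R))$ — no factor of $2$ anywhere.

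Second, and more seriously, your $(\Rightarrow)$ direction rests on the claim that $\min_{E'}\#(E'\cap X_i)_{\R}=|\lk(X_i,E(\R))|$, achieved by isotoping $E'$ so that ``excess real intersection points cancel in conjugate-type pairs.'' As a general topological statement about an embedded sphere and the pencil of linear spaces through $E$ this is false: take $X_0\subset\pp^2(\R)$ to be the boundary circle of a thin M\"obius-band neighbourhood of a line and $E$ a point inside the band but off the core line. Then $\lk(X_0,E(\R))=0$ (both lifts of $E$ lie outside the two discs bounded by the lifted circles, so the signed count along any half great circle vanishes), yet every line through $E$ meets $X_0$ in at least two real points, because the corresponding arc $W$ must leave and re-enter the lifted annulus. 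So the minimum is not the linking number, and the excess points cannot be isotoped away within the constrained family of $E'\supset E$. The paper's proof of the forward implication does not go through a minimisation at all: it uses that hyperbolicity forces all $\deg X$ points of $L\cap X$ to be real, hence all $\deg X$ of their lifts meeting $W$ to be accounted for, and identifies the resulting counts with the linking numbers via the hemisphere description. (Admittedly the paper is itself terse about why no sign cancellation occurs in that count — the honest justification is that hyperbolicity makes $\pi_E\colon X(\R)\to\pp^k(\R)$ a covering map, so each lifted component maps to the linking sphere $S^k$ by a covering, whose mapping degree has absolute value equal to its sheet number — but your proposed substitute argument is not a valid repair, since the statement it relies on is false.)
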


\begin{proof}
The variety $X$ is hyperbolic with respect to $E$ if and only if every 
dimension $n-k$ real linear space $L$ that contains $E$ intersects $X$ in $\deg X$ many (distinct) real points.
Let $p : S^n \rightarrow \mathbb{P}^n (\R )$ be an unramified double cover.
For any choice of such an $L \subset \mathbb{P}^n$ containing $E$, the preimage $p^{-1}(E (\R ))$ is a sphere of dimension $n-k-1$ inside $p^{-1}(L (\R ))$ which in turn is a sphere of dimension $n-k$.
Let $W \subset p^{-1} (L (\R ))$ be a hemisphere whose boundary is $p^{-1} (E(\R ))$.
If $X$ is hyperbolic with respect to $E$, then the absolute values of the linking numbers $\lk (X_i , E (\R))$, which are the intersection numbers of the $p^{-1} (X_i)$ with $W$, sum up to $\deg X$.
Conversely, if the intersection number of $W$ with the preimage of $X(\R )$ is $\deg X$, then $L$ has (at least) $\deg X$ many real intersection points with $X$.
\end{proof}

The following proposition is a direct application of \Cref{LinkLem} and is used in \Cref{explicit}. 

\begin{prop}
\label{S2}
Let $X\subset \mathbb{P}^n$, for some $n\geq 3$, be a smooth surface and let $X_0$ be a connected component of $X(\mathbb{R} )$ homeomorphic to $S^2$.
Let $E \subset \mathbb{P}^n$ be a linear subspace of codimension 3 with $E\cap X = \emptyset$.
If $E( \R )$ intersects a 3-dimensional disc $W\subset\pp^n(\R)$, whose boundary is $X_0$, in exactly one point, then $|\lk (X_0 ; E(\R ))| = 2$.
\end{prop}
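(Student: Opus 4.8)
The plan is to prove this by the same lifting technique that underlies \Cref{LinkLem}. Let $p\colon S^n\to\pp^n(\R)$ be the double cover with deck involution $\tau$. Since $X_0\cong S^2$ and the $3$-disc $W$ are simply connected, their preimages split as trivial double covers: $p^{-1}(X_0)=K_1\sqcup K_2$ is a disjoint union of two embedded $2$-spheres and $p^{-1}(W)=W_1\sqcup W_2$ a disjoint union of two embedded $3$-discs with $\partial W_i=K_i$, while $\tau$ interchanges $W_1\leftrightarrow W_2$ (hence $K_1\leftrightarrow K_2$); the preimage $\widetilde E:=p^{-1}(E(\R))$ is an embedded $(n-3)$-sphere that $\tau$ carries to itself as the antipodal map. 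We may assume that $W$ meets $E(\R)$ transversally at its single point $q$, perturbing $W$ rel $\partial W$ if necessary, which does not affect $\lk(X_0,E(\R))$.

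First I would locate the intersection that controls the linking number. From $p^{-1}(W\cap E(\R))=p^{-1}(\{q\})=\{q_1,q_2\}$ with $\tau(q_1)=q_2$, and the fact that $q_1\in W_1$ forces $q_2=\tau(q_1)\in\tau(W_1)=W_2$, one gets $W_1\cap\widetilde E=\{q_1\}$ and $W_2\cap\widetilde E=\{q_2\}$, each a single transverse point. Since $p^{-1}(W)=W_1\sqcup W_2$ is a $3$-chain bounding the cycle $p^{-1}(X_0)$, the linking number $\lk(X_0,E(\R))=\lk(p^{-1}(X_0),\widetilde E)$ in $S^n$ can be evaluated from this chain (directly when $n\ge 4$, and through a dual $1$-chain bounding the $0$-sphere $\widetilde E$ when $n=3$, as in the remark following the definition of linking numbers) as the intersection number $[p^{-1}(W)]\cdot[\widetilde E]=\varepsilon_1+\varepsilon_2$, up to a fixed overall sign, where $\varepsilon_i\in\{\pm 1\}$ is the local intersection sign of $W_i$ and $\widetilde E$ at $q_i$.

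It then remains to check that $\varepsilon_1=\varepsilon_2$, which gives $|\lk(X_0,E(\R))|=2$. This is where the symmetry under $\tau$ enters: $\tau$ transports the local intersection datum at $q_1$ to the one at $q_2$, and combining the orientation behaviour of $\tau$ on the ambient $S^n$, on $\widetilde E\cong S^{n-3}$, and on the pair $W_1\to W_2$ — together with the orientation that $p^{-1}(X_0)=K_1\sqcup K_2$ carries in the definition of the linking number — these contributions multiply to $+1$, so the two signs agree. The main obstacle is exactly this last sign bookkeeping: with the naive orientations on $K_1$ and $K_2$ (each obtained by lifting one fixed orientation of $X_0$) the two contributions would cancel and one would spuriously get $0$, so one must use the convention under which $\tau$ interchanges the two sheets of $p^{-1}(X_0)$ in an orientation-reversing way; granting that, the sign computation is mechanical, and the rest — the splitting of the preimages and the point count — is routine.
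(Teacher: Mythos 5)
Your argument follows the same basic strategy as the paper --- pass to the double cover $p\colon S^n\to\pp^n(\R)$ --- but executes it differently. The paper lifts only the sphere $X_0$ (via the lifting criterion for the simply connected $S^2$) to conclude $[X_0]=0$ in $H_2(\pp^n(\R);\Z_2)$, deduces that the linking number is even, and then invokes the single intersection point of $W$ with $E(\R)$. You instead lift the bounding disc $W$ itself, obtain $p^{-1}(W)=W_1\sqcup W_2$ with $W_i\cap\widetilde{E}=\{q_i\}$, and evaluate the linking number as $\varepsilon_1+\varepsilon_2$ with $\varepsilon_i=\pm1$. That reduction is correct, and it usefully makes explicit something the paper's own proof passes over quickly: evenness together with ``one intersection point'' only yields $\lk\in\{0,\pm2\}$, so the entire content of the proposition is ruling out the value $0$.

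That is exactly where your proof stops being a proof. You do not verify $\varepsilon_1=\varepsilon_2$: you assert that the orientation contributions ``multiply to $+1$,'' and when you notice that the pushforward orientations on $K_1\sqcup K_2$ would force cancellation, you resolve this by decreeing that one must use the convention in which $\tau$ interchanges the sheets orientation-reversingly. Since the definition of $\lk(K,L)$ in the paper never specifies how the two lifted spheres are oriented, selecting the convention that makes the proposition true is circular. To close the gap you should (a) actually carry out the sign computation --- $\tau$ has degree $(-1)^{n+1}$ on $S^n$ and degree $(-1)^{n-2}$ on $\widetilde{E}\cong S^{n-3}$, so these factors always combine to $-1$ and the pushforward convention indeed gives $\varepsilon_2=-\varepsilon_1$, independently of the parity of $n$ --- and (b) justify the opposite convention non-circularly, for instance by checking that it is the one under which \Cref{LinkLem} is valid (the two real points in which a generic $L\supset E$ meets a sphere component must count with equal signs for $\sum_i|\lk(X_i,E(\R))|$ to be able to reach $\deg X$). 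Without (b), your argument only establishes $|\lk(X_0,E(\R))|\in\{0,2\}$, with the value depending on an unspecified choice.
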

\begin{proof}
Let $p : S^n \rightarrow \mathbb{P}^n(\R)$ be an unramified double cover, and let $f : S^2 \rightarrow  \mathbb{P}^n(\R)$ be a continuous map. Let $p_* , f_*$ be the corresponding induced homomorphisms between fundamental groups. Since $\pi_1 (S^k) = 0$ for $k\geq 2$, we have that $f_* (\pi_1 (S^2)) \subseteq p_* (\pi_1 (S^n))$ and, by the Lifting Property (\cite[Proposition 1.33]{hatcher2002algebraic}), there exists a lift $\widetilde{f} : S^2 \rightarrow S^n$ of $f$. As homology is functorial (and covariant), it preserves compositions, hence we have $f_* = p_* \circ \widetilde{f}_*$ for
the induced homomorphisms between $\Z_2$-homology groups.
As $H_2 (S^n ; \Z_2) = 0$ for $n\geq 3$, the homomorphism $f_*$ factors through zero, therefore it is the zero homomorphism.
In particular, if $[S^2]\in H_2 (S^2; \Z_2)$ is the fundamental class of $S^2$, then $f_* [S^2] = 0$.
From this, we get that the linking number between $X_0$ and $E( \mathbb{R} )$ must be even.
By assumption, the real part $E(\R )$ intersects a 3-dimensional disc $W \subset \pp^n (\R)$ of boundary $X_0$ in exactly one point, therefore $\lk (X_0 , E(\R )) = \pm 2$.
\end{proof}

We conclude this section by giving a proof of \Cref{thm: alt_linking1}.

\begin{proof}[Proof of \Cref{thm: alt_linking1}]
The variety $C$ is hyperbolic with respect to $E$ if and only if every dimension $n-k$ linear space $L$ that contains $E$ intersects $C$ in $\deg C$ many (distinct) real points.
Let $p : S^{n-1} \rightarrow H (\R )$ be an unramified double cover. For any choice of such a $L \subset H$ containing $E$, the preimage $p^{-1}(E (\R ))$ is a sphere of dimension $n-k-1$ inside $p^{-1}(L (\R ))$ which in turn is a sphere of dimension $n-k$.
Let $W \subset p^{-1} (L (\R ))$ be a hemisphere whose boundary is $p^{-1} (E(\R ))$.
If $C$ is hyperbolic with respect to $E$, then the absolute values of the linking numbers $\lk (C_i , E (\R))$, which are the intersection numbers of the $p^{-1} (C_i)$ with $W$, sum up to $\deg C$ (\Cref{LinkLem}). Now, let $j: S^{n-1} \hookrightarrow S^{n}$, $i: H(\R) \hookrightarrow \pp^{n}(\R)$ and $q: S^{n}\rightarrow \pp^{n}(\R)$ respectively be two inclusions and a double unramified cover. One has that $j \circ p^{-1}= q^{-1}\circ i$.\\
Denote by $\tilde{E}\subset S^{n}$ the image of $p^{-1} (E (\R ))$ via $j$, which is still a sphere of dimension $n-k-1$ in $S^{n}$. The $n-k$ sphere $\tilde{L} : = j(p^{-1} (L(\R ))$ has $\tilde{E}$ as an equator, and $\tilde{W} := j(W)$ is one of its two hemispheres of boundary $\tilde{E}$. It follows that $q^{-1}(X_{i})$ has to intersect $\tilde{W}$ in at least $|\lk (C_i , E (\R ))|$ number of points. 
Therefore, 
\begin{equation}
\label{eqn: link_dimk}
\sum\limits_{i=1}^{r+s} |\lk (X_i , i( E(\R )))| \geq 2s+r,
\end{equation}
as $\tilde{E} = j (p^{-1} (E(\R )) = q^{-1} (i (E(\R ))$.
Moreover the inequality (\ref{eqn: link_dimk}) is an equality, since
the sum on the left-hand side is at most $\deg (X) = 2s+r$. Thus by Proposition \ref{LinkLem}, the variety $X$ is hyperbolic with respect to $i(E) = E$. 

The converse can be seen by taking the restriction of the set of linear spaces of dimension $n-k-1$ through $E$ to the hyperplane $H$. Every such linear space intersects $C= X\cap H$ in $2s+r$ real points. The hyperbolicity of $X$ with respect to $E$ implies that $C$ is hyperbolic with respect to $E$.
\end{proof}

\section{Hyperbolic del Pezzo surfaces}
\label{sec:hyppezzo}

In this section we will prove Theorems \ref{thm : RealFibDelPezzo}, \ref{thm : HypDelPezzo} and \ref{thm:main}.
 
\begin{lem}\label{lem:notrv}
 Let $X\subset\pp^n$ be a smooth surface such that $X(\R )$ is homeomorphic to $s S^2 \sqcup \R \pp^2$ 
 with $\deg(X)=2s+1$. The connected component that is homeomorphic to a real projective plane realises the nontrivial homology class in $H_2(\mathbb{P}^{n}(\R) ; \Z_2)$.
\end{lem}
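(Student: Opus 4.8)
The plan is to compute the total $\Z_2$-homology class $[X(\R)]\in H_2(\pp^n(\R);\Z_2)$ in two different ways and compare. Recall that $H_2(\pp^n(\R);\Z_2)\cong\Z_2$ for $n\ge 2$, generated by the class $[\pp^2(\R)]$ of a linearly embedded real plane, and that the mod-$2$ intersection pairing $H_2(\pp^n(\R);\Z_2)\times H_{n-2}(\pp^n(\R);\Z_2)\to\Z_2$ is nondegenerate with $[\pp^2(\R)]\cdot[\pp^{n-2}(\R)]=1$, since two generic linear subspaces of complementary dimension meet transversally in exactly one point. If $n=2$ then $X=\pp^2$, so $s=0$ and $X(\R)=\pp^2(\R)$ is already the nonzero class; hence we may assume $n\ge 3$. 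Write $X_1,\dots,X_s$ for the sphere components and $X_0$ for the component homeomorphic to $\R\pp^2$, so that $[X(\R)]=\sum_{i=0}^{s}[X_i]$ in $H_2(\pp^n(\R);\Z_2)$.

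First I would show that each sphere component is nullhomologous: $[X_i]=0$ in $H_2(\pp^n(\R);\Z_2)$ for $i\ge 1$. This is exactly the argument used in the proof of \Cref{S2}: since $\pi_1(S^2)$ is trivial, the inclusion $S^2\hookrightarrow\pp^n(\R)$ lifts through the unramified double cover $S^n\to\pp^n(\R)$ to a map $S^2\to S^n$, and $H_2(S^n;\Z_2)=0$ because $n\ge 3$; functoriality of $\Z_2$-homology then forces $[X_i]=0$. Consequently $[X(\R)]=[X_0]$.

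Next I would compute $[X(\R)]$ from the degree. Choose a real linear subspace $L\subset\pp^n$ of dimension $n-2$ that is generic, in the sense that $X(\C)\cap L(\C)$ consists of $\deg(X)=2s+1$ distinct points at which $X(\C)$ and $L(\C)$ are transverse; such an $L$ can be taken with real coefficients, since the locus of bad $L$ is a proper Zariski-closed subset of the Grassmannian of $(n-2)$-planes and therefore has real points of measure zero. Complex conjugation acts on the finite set $X(\C)\cap L(\C)$ with the real intersection points as its fixed locus, so $\#\bigl(X(\R)\cap L(\R)\bigr)\equiv 2s+1\equiv 1\pmod 2$. Moreover transversality of $X(\C)$ and $L(\C)$ at a real point forces transversality of $X(\R)$ and $L(\R)$ there inside $\pp^n(\R)$, so this count equals the mod-$2$ intersection number $[X(\R)]\cdot[L(\R)]$. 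Writing $[X(\R)]=c\,[\pp^2(\R)]$ and $[L(\R)]=[\pp^{n-2}(\R)]$, nondegeneracy of the pairing yields $c=1$. Combined with the previous paragraph, $[X_0]=[X(\R)]=[\pp^2(\R)]\ne 0$, which is the assertion.

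The only delicate point is the genericity bookkeeping over $\R$ in the third step: one must make sure a real $L$ with complex-transverse intersection of the expected cardinality exists, and that the naive count of real points of $X\cap L$ coincides with the $\Z_2$-intersection number in $\pp^n(\R)$; everything else is formal. Alternatively, one could invoke the Borel--Haefliger theorem, which gives $[X(\R)]=\overline{\deg X}\cdot h^{n-2}$ directly (with $h\in H^1(\pp^n(\R);\Z_2)$ the generator), bypassing this step entirely.
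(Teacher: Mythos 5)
Your proposal is correct and follows essentially the same route as the paper: the paper's proof likewise observes that odd degree forces $[X(\R)]$ to be the nontrivial class, that every embedded sphere in $\pp^n(\R)$ is nullhomologous by the lifting argument from the proof of Proposition~\ref{S2}, and concludes that the $\R\pp^2$ component carries the nontrivial class. You merely spell out the intersection-theoretic justification of the first step (which the paper states without proof) and handle the trivial case $n=2$ explicitly.
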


\begin{proof}
 Since $X$ has odd degree, its real part $X(\R)$ realises the nontrivial homology class in $H_2(\mathbb{P}^{n}(\R) ; \Z_2)$.  Every sphere embedded to $\mathbb{P}^{n}(\R)$ is homologous to zero (see proof of \Cref{S2}). Thus the remaining connected component must realise the nontrivial class.
\end{proof}

\begin{lem}\label{lem:alt2}
 Let $X\subset\pp^n$ be a smooth nondegenerate surface such that $X(\R )$ is homeomorphic to $s S^2 \sqcup r \R \pp^2$ 
 with $\deg(X)=2s+r$, $n=s+2$ and $r \in \{0,1\}$. Assume that the sectional genus of $X$ is $s+r-1$. There is a hyperplane $H\subset\pp^n$ such that $C=X\cap H$ is a smooth and nondegenerate $M$-curve with the property that each connected component of $X(\R)$ contains exactly one connected component of $C(\R)$.
\end{lem}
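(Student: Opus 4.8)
The plan is to produce $H$ by requiring its section of $X$ to pass through one chosen point in each sphere component of $X(\R)$, and then to determine the number of real components of $C$ from Harnack's inequality combined with the prescribed sectional genus.

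Concretely, I would first fix, for every connected component of $X(\R)$ homeomorphic to $S^2$, a point $x_i$ lying on it ($i=1,\dots,s$), chosen general inside that component, and consider the linear system $\mathfrak d\subseteq|\cO_X(1)|$ of hyperplane sections of $X$ passing through $x_1,\dots,x_s$. Since $X$ is nondegenerate in $\pp^n$ with $n=s+2$, the restriction map on global sections gives $h^0(X,\cO_X(1))\ge n+1=s+3$, so $\mathfrak d$ has projective dimension at least $2$, and the rational map it defines is the linear projection of the smooth surface $X$ from the span of the $x_i$, which for a general choice of the $x_i$ is generically finite onto a surface; hence $\mathfrak d$ is not composed with a pencil. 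Because $\cO_X(1)$ is the restriction of the very ample $\cO_{\pp^n}(1)$ to $X$ it is very ample, which (again for a general choice of the $x_i$) forces the members of $\mathfrak d$ that are singular at one of the base points $x_i$ to form a proper closed sublocus. Combining this with Bertini's theorem away from $x_1,\dots,x_s$ and with the nondegeneracy of $X$, a general member $C=X\cap H$ of $\mathfrak d$ is smooth, irreducible and nondegenerate in $H=\pp^{n-1}$; since $C\in|\cO_X(1)|$, the adjunction formula shows that its genus equals the sectional genus of $X$, namely $s+r-1$.

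Next I would check that $C(\R)$ meets every connected component of $X(\R)$. By construction $x_i\in C(\R)$ for each of the $s$ sphere components. If $r=1$, let $X_0\subseteq X(\R)$ denote the unique component homeomorphic to $\R\pp^2$; then $\deg X=2s+1$ is odd, so by \Cref{lem:notrv} the class $[X_0]$ is the nonzero element of $H_2(\pp^n(\R);\Z_2)$. Since $[H(\R)]$ is the nonzero element of $H_{n-1}(\pp^n(\R);\Z_2)$ and the mod $2$ intersection pairing of the generators of $H_2$ and $H_{n-1}$ in $\pp^n(\R)$ is the nonzero class of $H_1(\pp^n(\R);\Z_2)$, the set $H(\R)\cap X_0$ is nonempty for every hyperplane $H$. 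Hence $C(\R)$ has nonempty intersection with each of the $s+r$ connected components of $X(\R)$, and in particular $C(\R)$ has at least $s+r$ connected components.

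Finally, $C$ is a smooth irreducible real curve of genus $s+r-1$, so Harnack's inequality bounds the number of connected components of $C(\R)$ by $s+r$. Together with the previous step this forces $C(\R)$ to have exactly $s+r$ connected components, precisely one inside each connected component of $X(\R)$; in particular $C$ is an $M$-curve, as desired. The only genuinely delicate point I anticipate is the smoothness of a general member of $\mathfrak d$ at the imposed base points $x_1,\dots,x_s$ (and the fact that $\mathfrak d$ is not composed with a pencil), which is exactly where the very ampleness of $\cO_X(1)$ and the general choice of the $x_i$ are needed; the rest is a bookkeeping of Bertini's theorem and Harnack's inequality.
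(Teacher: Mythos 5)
Your proof is correct and follows essentially the same route as the paper: pick one point on each sphere component, pass a hyperplane through them (possible since $n=s+2>s$), use Bertini for smoothness and irreducibility, invoke \Cref{lem:notrv} to guarantee the $\R\pp^2$ component meets $H(\R)$, and conclude with Harnack's inequality applied to the genus $s+r-1$. You simply spell out the Bertini-with-base-points and homological intersection details more explicitly than the paper does.
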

\begin{proof}
 For any choice of one point $p_i$ on each connected component of $X(\R)$ that is homeomorphic to a sphere, there is a hyperplane of $\pp^n$ that contains these points since $n>s$. If we choose these points general enough, then by Bertini's lemma there is such a hyperplane $H$ whose intersection with $X$ is smooth and nondegenerate in $H$. Let $C$ be $X\cap H$. By construction and \Cref{lem:notrv}, each connected component of $X(\R)$ contains at least one connected component of $C(\R)$. But since the genus of $C$ is $s+r-1$, it must be exactly one connected component of $C(\R)$ on each connected component of $X(\R)$. 
\end{proof}

\begin{lem}\label{lem:alt3}
 Let $C\subset\pp^{s+1}$ be a smooth nondegenerate $M$-curve of genus $g=s+r-1$ and degree $2s+r$ such that $r$ components of $C(\R)$ realise the nontrivial homology class in 
$H_1(\mathbb{P}^{s+1}(\R) ; \Z_2)$. Then $C$ is hyperbolic.
\end{lem}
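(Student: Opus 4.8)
\textbf{Proof proposal for Lemma \ref{lem:alt3}.}

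The plan is to set up an induction on $s$ and feed the inductive hypothesis into \Cref{thm: alt_linking1}. The base case is $s=0$: then $C\subset\pp^1$ is a curve of degree $r$ with $r\in\{0,1,2\}$ consistent with $g=r-1\geq 0$, so in fact $g=0$, $r\in\{1,2\}$, and $C\to\pp^1$ is either an isomorphism or a degree-$2$ separating cover; in both cases the identity map exhibits $C$ as hyperbolic with respect to the empty linear subspace $E=\emptyset$ of dimension $-1$. (Alternatively, one may take $s=1$, $r\in\{0,1\}$ as a base case: a plane conic with nonempty real part, or a cubic whose real part is a pseudoline, are classically hyperbolic — the former with respect to any interior point of its real oval, the latter with respect to any point not on it; here one should invoke \Cref{LinkLem} directly, computing the linking number of the unique real component, which is a circle bounding a disc met once by the center, giving $|\lk|=2$ when $r=0$, or the pseudoline linking the dual point once when $r=1$.)

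For the inductive step, given $C\subset\pp^{s+1}$ as in the statement with $s\geq 2$ (so that $C$ has dimension $k-1$... no: here $C$ itself is the curve, and we want to realise it as a hyperplane section of a surface), I would instead run the induction the other way around: view $C$ as playing the role of "$C$" in \Cref{thm: alt_linking1}, meaning I need a smooth surface $X\subset\pp^{s+2}$ with $X\cap H = C$. Concretely, since $C$ is an $M$-curve of genus $s+r-1$ and degree $2s+r$ in $\pp^{s+1}$, one checks these are exactly the numerical invariants forcing $C$ to be (embedded by a complete linear system on) a del Pezzo-type curve, and one produces $X$ as the cone-like construction or, better, one uses that $C$ arises as the hyperplane section of a del Pezzo surface $X$ from \Cref{tb:list} with the matching data: $X(\R)\simeq s S^2\sqcup r\R\pp^2$, $\deg X=2s+r$, sectional genus $s+r-1$, and $n=s+2$ by \Cref{cor:list}. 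Then \Cref{lem:alt2} supplies a hyperplane section which is a smooth nondegenerate $M$-curve meeting each component of $X(\R)$ exactly once; by uniqueness of the complete linear system, this is (projectively equivalent to) $C$. Choosing $E\subset H$ a linear subspace of dimension $(s+2)-2-1 = s-1$ with $X\cap E=\emptyset$, the inductive hypothesis applied to the $(k-1)$-dimensional hyperbolic data lets us assume $C=X\cap H$ is hyperbolic with respect to $E$; the hypothesis on the nontrivial homology classes propagates because $r$ components of $C(\R)$ are nontrivial in $H_1(H(\R);\Z_2) = H_1(\pp^{s}(\R);\Z_2)$, hence so are the corresponding components of $X(\R)$ in $H_2(\pp^{s+2}(\R);\Z_2)$ by the long exact sequence / the argument in \Cref{lem:notrv}. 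Then \Cref{thm: alt_linking1} gives that $X$ is hyperbolic with respect to $E$, and restricting back to $H$ (exactly as in the converse direction of the proof of \Cref{thm: alt_linking1}) shows $C$ is hyperbolic with respect to $E$ inside $\pp^{s+1}$, completing the induction.

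The main obstacle I expect is the identification step: showing that a smooth nondegenerate $M$-curve $C\subset\pp^{s+1}$ with the prescribed genus and degree \emph{is} realisable as a hyperplane section of one of the surfaces in \Cref{tb:list}, and that the separating/topological hypotheses transfer correctly under this identification (in particular that "each component of $X(\R)$ meets $C(\R)$ once" can be arranged, which is where \Cref{lem:alt2} and the hypothesis $g = s+r-1$ do the work). A secondary subtlety is bookkeeping the homology condition through the inclusion $H\hookrightarrow\pp^{s+2}$ and through the double cover, to guarantee the hypothesis of \Cref{thm: alt_linking1}(2) about nontrivial classes is met at each stage; this should follow formally, but it must be stated carefully because a circle in $\pp^{s+1}(\R)$ can be homologically trivial while the surface component containing it is not (and vice versa), so one must track the specific $r$ components.
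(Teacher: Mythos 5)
There is a genuine gap: your inductive step is circular. Theorem \ref{thm: alt_linking1} reduces the hyperbolicity of a $k$-dimensional variety $X$ to the hyperbolicity of its hyperplane section $C=X\cap H$; it can never be used to \emph{establish} hyperbolicity of the curve at the bottom of that reduction. In your step you write that ``the inductive hypothesis \dots lets us assume $C=X\cap H$ is hyperbolic with respect to $E$'' --- but for a curve $C\subset\pp^{s+1}$ and $E$ of dimension $s-1=(s+1)-1-1$ this \emph{is} the conclusion of the lemma for the given value of $s$. An induction on $s$ changes the ambient space of the curve, whereas \Cref{thm: alt_linking1} only lowers the dimension of the variety (and requires $k\geq 2$), so it provides no passage from curves in $\pp^{s}$ to curves in $\pp^{s+1}$. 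Concluding that $X$ is hyperbolic and then ``restricting back to $H$'' just returns the assumption you started from. Separately, realising $C$ as a hyperplane section of a del Pezzo surface from \Cref{tb:list} would invert the paper's logical order: \Cref{lem:alt3} is an input to \Cref{thm:suff}, which is what establishes hyperbolicity of those surfaces in the first place.

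The missing content is genuinely curve-theoretic and does not follow from the linking machinery alone. The paper argues directly: since $C$ is an $M$-curve with $s+r$ components, a general hyperplane meeting each of the $s$ null-homologous components cuts out a totally real divisor $D=D_0+\sum_{i=1}^{s}P_i$, where $D_0$ consists of one point on each real component. Gabard's result (\cite[Prop.~4.1]{gabard}) shows $|D_0|$ defines a real-fibered pencil $C\to\pp^1$; the intermediate divisors $D_k=D_0+\sum_{i\le k}P_i$ are nonspecial by \cite[Thm.~2.5]{nsd}; and iterating \cite[Prop.~3.2]{sep} with \cite[Lem.~2.10]{sep} shows the embedding by $|D|$ is hyperbolic. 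Nonspeciality of $D$ plus Riemann--Roch then identifies $|D|$ with the given embedding $C\subset\pp^{s+1}$. Your proposal contains none of these ingredients, so the hyperbolicity of $C$ is never actually proved.
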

\begin{proof}
 First we note that $C$ has $s+r$ connected components. A general enough hyperplane $H$ that intersects each of the $s$ connected components $C_1,\ldots,C_s$ of $C(\R)$ that realise the trivial homology class will intersect $C$ in $2s+r$ distinct real points. Let $D$ be the divisor corresponding to this hyperplane section. It is of the form $$D=D_0+\sum_{i=1}^sP_i$$ for some effective divisor $D_0$ and  points $P_i\in C_i$ that are not in the support of $D_0$. Note that $D_0$ is the sum of one point from each connected component of $C(\R)$. The divisor $D_k=D_0+\sum_{i=1}^kP_i$ is nonspecial by \cite[Thm.~2.5]{nsd} for all $k=1,\ldots,s$. The complete linear system $|D_0|$ has dimension $2$ and the corresponding morphism $C\to\pp^1$ is real-fibered by \cite[Prop.~4.1]{gabard}. Therefore, an iterated application of \cite[Prop.~3.2]{sep} shows together with \cite[Lem.~2.10]{sep} that the embedding of $C$ via the complete linear system $|D|$ is hyperbolic. But since $D$ is nonspecial, Riemann--Roch shows that this is exactly our embedding $C\subset\pp^{s+1}$ we started with.
\end{proof}
\begin{thm}\label{thm:suff}
 Let $X\subset\pp^n$ be a smooth nondegenerate surface such that $X(\R)$ is homeomorphic to $s S^2 \sqcup r \R \pp^2$ with $\deg(X)=2s+r$, $r\in\{0,1\}$ and $n=s+2$. Assume that the genus of a hyperplane section on $X$ is $s+r-1$. Then $X$ is hyperbolic.
\end{thm}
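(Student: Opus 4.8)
The plan is to combine the reductive \Cref{thm: alt_linking1} with the curve results \Cref{lem:alt2} and \Cref{lem:alt3}. First I would produce, via \Cref{lem:alt2}, a hyperplane $H\subset\pp^n$ such that $C=X\cap H$ is a smooth nondegenerate $M$-curve of genus $s+r-1$ and degree $2s+r$, living in $H\cong\pp^{n-1}=\pp^{s+1}$, with the crucial property that each connected component of $X(\R)$ contains exactly one connected component of $C(\R)$. (Here I use that $\deg X=2s+r$, $n=s+2$, $r\in\{0,1\}$, and that the sectional genus is $s+r-1$, which are exactly the hypotheses we are handed.)

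Next I would verify the homological hypothesis needed for \Cref{lem:alt3}: namely that $r$ of the components of $C(\R)$ realise the nontrivial class in $H_1(\pp^{s+1}(\R);\Z_2)$. If $r=0$ there is nothing to check. If $r=1$, then $X$ has odd degree, so $X(\R)$ is nontrivial in $H_2(\pp^n(\R);\Z_2)$; since every embedded sphere in $\pp^n(\R)$ is null-homologous (as recalled in the proof of \Cref{S2}), the unique $\R\pp^2$-component of $X(\R)$ carries the nontrivial class. The corresponding component of $C(\R)$ sits on that $\R\pp^2$, and a general hyperplane section of $\R\pp^2\subset\pp^n(\R)$ realising the nontrivial $H_2$-class meets a generic $\pp^{n-2}(\R)$ in an odd number of points, hence is nontrivial in $H_1(\pp^{n-1}(\R);\Z_2)$ — one should phrase this carefully using that the intersection pairing $H_2\times H_{n-2}\to\Z_2$ of $\pp^n(\R)$ is nondegenerate, or restrict attention to a generic $H$. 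With this in hand, \Cref{lem:alt3} applies and tells us that $C\subset\pp^{s+1}$ is hyperbolic, i.e.\ there is a linear subspace $E\subset H$ of codimension $3$ in $H$ (dimension $(s+1)-2-1 = s-2 = n-3$) with $C\cap E=\emptyset$ and $C$ hyperbolic with respect to $E$.

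Finally I would feed everything into \Cref{thm: alt_linking1}, applied with $k=2$: we have a smooth surface $X\subset\pp^n$, a hyperplane $H$ with $C=X\cap H$ a smooth curve, each component of $X(\R)$ containing exactly one component of $C(\R)$, a linear space $E\subset H$ of dimension $n-k-1=n-3$ disjoint from $X$, and condition (2) of that theorem holds: $X(\R)\simeq sS^2\sqcup r\R\pp^2$ with $\deg X=2s+r$, the $\R\pp^2$-component is nontrivial in $H_2(\pp^n(\R);\Z_2)$ by \Cref{lem:notrv}, and $C$ is hyperbolic with respect to $E$ in $H=\pp^{n-1}$. Therefore (1) holds, i.e.\ $X$ is hyperbolic with respect to $E$, which is exactly the claim.

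The main obstacle I anticipate is the bookkeeping around the $r=1$ case: one must ensure that the single $\R\pp^2$-component of $X(\R)$ really does cut $C$ in a component that is homologically nontrivial in $\pp^{s+1}(\R)$, so that \Cref{lem:alt3} is genuinely applicable, and simultaneously that $E\cap X=\emptyset$ (not merely $E\cap C=\emptyset$) — the latter is automatic since $E\subset H$ and $C=X\cap H$, but it is worth stating. A secondary subtlety is checking the dimension arithmetic so that $E$ has the right dimension to serve in both \Cref{lem:alt3} (center of a codimension-$3$ projection of $C$ in $\pp^{s+1}$) and \Cref{thm: alt_linking1} (dimension $n-k-1$), which matches because $n-3 = (s+2)-3 = s-1$... wait, more carefully: $\dim E = (s+1)-3 = s-2$ as a subspace of $\pp^{s+1}$, and $n-k-1 = (s+2)-2-1 = s-1$; these do not match, so in fact one takes $E$ of dimension $s-2$ and notes $s-2 = n-3$, reconciling with the codimension-$3$ hypothesis of \Cref{thm:main} rather than the generic $n-k-1$ — so I would double-check that \Cref{thm: alt_linking1} is being invoked in the form appropriate to surfaces (where $\codim(X,\pp^n)-1 = n-3$), and that is precisely the setting, so no genuine conflict arises.
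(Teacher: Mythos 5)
Your proposal is correct and follows essentially the same route as the paper's proof: \Cref{lem:alt2} to produce the hyperplane section $C$ meeting each component of $X(\R)$ in exactly one oval, \Cref{lem:notrv} to supply the homological hypothesis of \Cref{lem:alt3}, and then \Cref{thm: alt_linking1} to lift hyperbolicity from $C$ to $X$. The dimension worry in your last paragraph dissolves once you note that a curve in $H=\pp^{s+1}$ is hyperbolic with respect to a centre of codimension $2$ in $H$ (not $3$), i.e.\ $\dim E=s-1=n-3$, which is exactly the dimension $n-k-1$ required by \Cref{thm: alt_linking1} for $k=2$.
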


\begin{proof}
 By \Cref{lem:alt2} there is a hyperplane $H\subset\pp^n$ such that $C=X\cap H$ is a smooth and nondegenerate $M$-curve with the property that each connected component of $X(\R)$ contains exactly one connected component of $C(\R)$. Note that by \Cref{lem:notrv} exactly $r$ connected components of $C(\R)$ realise the nontrivial homology class. This curve $C$ is hyperbolic by \Cref{lem:alt3}. Thus $X$ is hyperbolic by \Cref{thm: alt_linking1}.
\end{proof}

\begin{prop}
\label{cor:d2}
 All the divisors listed in \Cref{tb:list} correspond to a real-fibered morphism. In addition, those divisors which are very ample correspond to a hyperbolic embedding.
\end{prop}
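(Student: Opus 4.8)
The plan is to prove the statement by running through the rows of \Cref{tb:list} and reducing each to a result already established. The rows whose last column points to \Cref{sec:easy} are treated there directly: the identity of $\pp^2$ for $D=\cO_{\pp^2}(1)$; the projection from a point interior to the $2$-sphere for $Q^{3,1}$ and for $\mathbb{D}_4(1,0)$, both regarded as hypersurfaces in $\pp^3$ via the hyperplane class; and the anticanonical double cover $\mathbb{D}_4(2,0)^1_1\to\pp^2$ branched along a real-point-free quartic for $D=-K$. In each of these cases real-fiberedness is checked in \Cref{sec:easy}, and for $Q^{3,1}$ and $\mathbb{D}_4(1,0)$ the hyperbolicity of the displayed embedding is checked there as well; moreover $D=-K$ on $\mathbb{D}_4(2,0)^1_1$ is the only row whose divisor is not very ample, so after this step only very ample divisors remain.

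For each of the remaining rows, $D$ is very ample (by the criterion of \cite{veryrocco}, as recorded in the table), so the complete linear system $|D|$ embeds $X$ as a smooth nondegenerate surface $X\subset\pp^{\ell(D)-1}$. I would then verify the hypotheses of \Cref{thm:suff}, reading the data off \Cref{tb:list}: we have $X(\R)\simeq sS^2\sqcup r\R\pp^2$ with $r\in\{0,1\}$; the degree is $\deg(X)=D.D=2s+r$ by part $(2)$ of \Cref{thm:nec}; the embedding dimension is $n=\ell(D)-1=s+2$, equivalently $\ell(D)=s+3$, which also follows from Riemann--Roch exactly as in the proof of \Cref{thm:nec}; and the sectional genus equals $s+r-1$, which is the genus column of the table (equivalently $D.K=r-4$). \Cref{thm:suff} then yields a codimension-$3$ linear subspace $E\subset\pp^{s+2}$ with $X\cap E=\emptyset$ such that $X$ is hyperbolic with respect to $E$; this is the asserted hyperbolic embedding, and the linear projection $\pi_E\colon X\to\pp^2$ is a real-fibered morphism since hyperbolicity with respect to $E$ is equivalent to real-fiberedness of $\pi_E$ (the remark following \Cref{hyperbolic}). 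This supplies both a real-fibered morphism and, in the very ample cases, a hyperbolic embedding for every row of the table.

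I do not expect a genuine obstacle: the content is concentrated in \Cref{thm:suff} and the lemmas feeding it (\Cref{lem:notrv}, \Cref{lem:alt2}, \Cref{lem:alt3}), and what remains here is the uniform, finite check that the numerical invariants of each row fit the hypotheses of \Cref{thm:suff}. The two inputs used but proved elsewhere are the very-ampleness of the larger divisor classes --- e.g.\ $F-K$ and $-F-3K$ on $\mathbb{D}_2$, the four classes listed on $\mathbb{D}_2(1,0)$, $-2K+E$ and $-4K-E$ on $\mathbb{G}_2(1,0)$, and $-3K$ on $\mathbb{B}_1$ --- for which I would invoke \cite{veryrocco}, and the identification of the real topology $X(\R)\simeq sS^2\sqcup r\R\pp^2$ of each surface, which is part of the classification of real del Pezzo surfaces and is already incorporated into \Cref{tb:list}.
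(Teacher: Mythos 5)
Your proposal is correct and follows essentially the same route as the paper, whose proof is precisely the one-line reduction to \Cref{thm:suff}, \Cref{cor:list} and \Cref{sec:easy} that you spell out row by row. The only difference is that you make explicit the verification of the hypotheses of \Cref{thm:suff} from the table data and the passage from a hyperbolic embedding to the real-fibered projection $\pi_E$, which the paper leaves implicit.
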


\begin{proof}
 This follows from \Cref{thm:suff} together with \Cref{cor:list} and \Cref{sec:easy}.
\end{proof}

\begin{proof}[Proof of Theorem \ref{thm : RealFibDelPezzo}]
All del Pezzo surfaces admitting a real-fibered morphism (i.e. those given by \Cref{cor:d2}) satisfy one of the conditions $(1),(2),(3)$.  
Namely, the surfaces $\pp^2 , \mathbb{G}_2$ and $\mathbb{B}_1$ have real Picard rank 1, the surfaces $\mathbb{D}_4$ and $\mathbb{D}_2$ are conic bundles of real Picard rank 2, and the other surfaces listed are blow-ups of one of the previous surfaces at one or two real points.  
\end{proof}

\begin{proof}[Proof of \Cref{cor:non-finite}]
Excluding the cases already treated in \Cref{thm : RealFibDelPezzo} and those where $X(\R)$ is not homeomorphic to $sS^{2} \sqcup r \mathbb{R}P^{2}$, we get that each remaining del Pezzo surface $X$  is a blow-up at pairs of complex conjugate points of another del Pezzo surface which admits finite real-fibered morphisms. Therefore $X$ does admit real-fibered morphisms to $\pp^2$.
\end{proof}

\begin{proof}[Proof of Theorem \ref{thm : HypDelPezzo}]
All del Pezzo surfaces admitting a hyperbolic embedding (i.e. those given by \Cref{cor:d2} with the very ampleness condition) satisfy one of the conditions $(1),(2),(3)$. 
Those satisfying $(1)$ or $(2)$ are the same as for Theorem \ref{thm : RealFibDelPezzo}, and all other surfaces listed satisfy $(3)$, except for $\mathbb{D}_4 (2,0)^1_1$, whose divisor was the only one not satisfying very ampleness.  
\end{proof}

\begin{proof}[Proof of Theorem \ref{thm:main}]
 First assume that we have $(1)$---$(3)$ from \Cref{thm:main}. Then the divisor $D$ given by a generic hyperplane section satisfies $(1)$---$(5)$ from \Cref{thm:nec}. 
The conditions $(1)$, $(2)$ and $(5)$ are clear and $(3)$, $(4)$ follow from our assumption on the genus of a hyperplane section. Indeed, the Adjunction Formula implies that $r=D.K+4$. Thus by \Cref{tb:list} we also have $r\in\{0,1\}$ and $n=s+2$. Now \Cref{thm:suff} implies that $X$ is hyperbolic. 
 Conversely, if $X$ is hyperbolic, then $(1)$---$(3)$ follow from \Cref{cor:list}.
\end{proof}
\begin{rem}
\label{rem:non-finite} 
Let $X$ be either $Q^{3,1}(0,2h)$ with $1 \leq h \leq 3$, or $\mathbb{P}^{2}(0,2j)$ with $1 \leq j \leq 4$, or $\mathbb{D}_{4}(0,2)$, or $\mathbb{D}_{4}(1,2)$. Each $X$ admits a real-fibered morphism (\Cref{cor:non-finite}) but no finite real-fibered morphism (\Cref{thm : RealFibDelPezzo}). Therefore, each such $X$ can only admit non-finite real-fibered morphisms.
\end{rem}
\begin{rem}
\label{geyser}
Let us consider the surface $\mathbb{D}_{2}$ and the divisors $D_{1}=F-K$ and $D_{2}=-F-3K$. Thanks to \Cref{thm:suff}, both divisors correspond to hyperbolic embeddings in $\pp^{5}$. Observe that $D_{i}$ is obtained by applying the Geiser involution to $D_j$, where $\{i,j\}=\{1,2\}$ (see \cite[\S 4, Example 3]{russo} for a description of Geiser involution). A similar approach works for the surface $\mathbb{D}_{2}(1,0)$ and the divisors $D_1=-3K-\tilde{F}+E$, $D_2=-3K+\tilde{F}-E$, $D_3=-5K-\tilde{F}-E$ and $D_4=-K+\tilde{F}+E$, one has that all divisors correspond to hyperbolic embeddings in $\pp^{4}$. Moreover, the Bertini involution sends $D_{j}$ to $D_{j+1}$, for $j=1,3$ (see \cite[\S 5 , Example 4]{russo}).
\end{rem}

\section{Explicit constructions and examples}
\label{explicit}
Here we construct for most of the embeddings in \Cref{tb:list} explicit linear subspaces with respect to which the del Pezzo surfaces under consideration are hyperbolic.

\begin{ex}
\label{prop:b1}
Consider the embedding $h: \mathbb{B}_1 \rightarrow \mathbb{P}^{6}$ associated to $|-3K|$.
We will explicitly construct linear subspaces $E\subset H\subset \pp^6$ of dimension $3$ and $5$ to which we can apply Theorem \ref{thm: alt_linking1}. 
The anti-bicanonical map $\phi$ of $\mathbb{B}_1$ is the double cover of the quadratic cone $Q$ in $\mathbb{P}^{3}$ ramified along the vertex $V$ of $Q$ and a real non-singular cubic section $S \subset Q$ disjoint from $V$ that is an $M$-curve of genus four. Via the map $\phi$, we want to construct three smooth curves $C_{1}$, $C_{2}$ and $C_{3}$ on $X$ that are linearly equivalent to $-3K$. Let us pick $C_{3}$ as $\phi^{-1}(S)$ (set-theoretical preimage). We observe that each connected component of $\mathbb{B}_1(\R)$ contains exactly one connected component of $C_3(\R)$. We construct $C_1$ and $C_2$ as follows. Choose a point $p_{i}$ on each connected component of $Q(\mathbb{R}) \setminus S(\mathbb{R})$ homeomorphic to a disk for $i=1,2,3,4$. Pick two curves $C_{ijk}$ and $C_{jkt}$ on $Q$ as the intersection of $Q$ with the hyperplane passing through $p_{i},p_{j},p_{k}$ and $p_{j},p_{k},p_{t}$ respectively, where $\{i,j,k,t\}=\{1,2,3,4\}$. Moreover, pick the two generatrices $L_{t}$ and $L_{i}$ of $Q$ passing through $p_{t}$ and $p_{i}$ respectively (see Fig. \ref{fig: DP1}). One can perturb the union of $C_{ijk}$ and $L_{t}$ resp. the union of $C_{jkt}$ and $L_{i}$ to a smooth curve $X_{1}$ resp. $X_{2}$ (see \cite[Section 4]{Manz20} for details) such that $S(\mathbb{R})\cap X_{i}(\mathbb{R})$ consists of nine distinct real points and $X_{i}(\mathbb{R})$ intersects each connected component of $S(\mathbb{R})$, for $i=1,2$. Then we let $C_i=\phi^{-1}(X_i)$ for $i=1,2$. We further choose hyperplanes $H_i\subset\mathbb{P}^6$ such that $C_i=X\cap H_i$ for $i=1,2,3$. The linear subspace $E=H_1\cap H_2\cap H_3$ has dimension $3$ and the divisors $H_i\cap C_3$ on $C_3$ for $i=1,2$ interlace on $C_3$ in the sense of \cite[\S 2.1]{sep}. Thus \cite[Lemma 2.1]{sep} shows that $C_3$ is hyperbolic with respect to $E$. Therefore, by Theorem \ref{thm: alt_linking1} applied to $E$ and $H=H_3$ we find that $\mathbb{B}_1$ is hyperbolic with respect to $E$ as well.
\begin{figure}
\begin{picture}(100,50)
\put(-95,3){\includegraphics[width=0.90\textwidth]{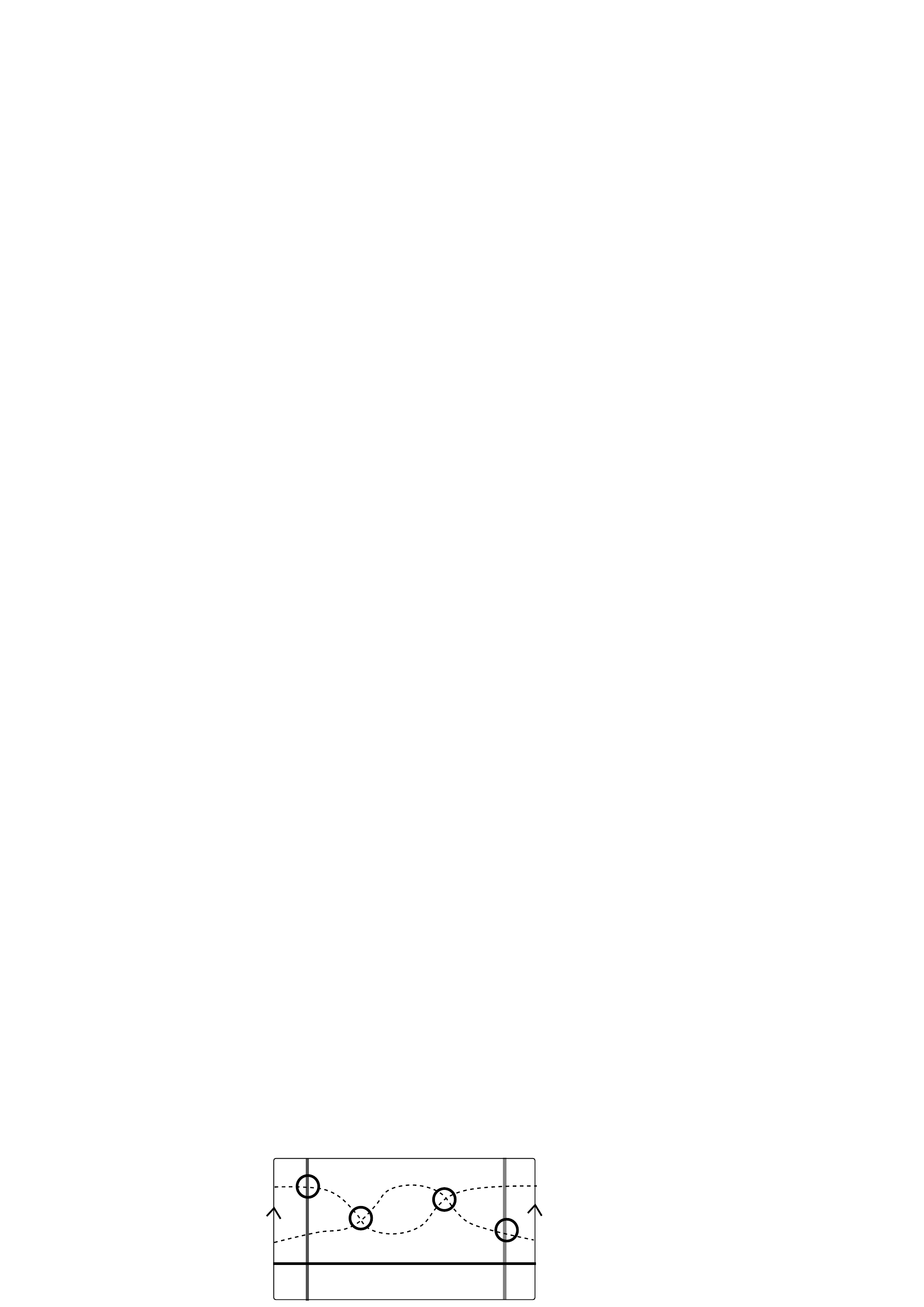}} 
\end{picture}
\caption{The quadrangle, whose vertical sides are identified accordingly with the arrows and horizontal sides represent the vertex $V$, is $Q(\R)$. The cubic section $S(\R)$ is in thick black, while $C_{ijk}(\R)$ and  $C_{jkt}(\R)$ in dashed. The generatrices $L_{i}(\R)$ and $L_{t}(\R)$ are in grey.}
\label{fig: DP1}
\end{figure}
\end{ex}

\begin{rem}
\label{rem: alternating}
Let $X$ be a real non-singular del Pezzo surface and $D$ a real very ample divisor on $X$. An analogue construction to \Cref{prop:b1} can be applied to the embedding associated to $|D|$, where $X=\mathbb{G}_{2}$ and $D=-2K$. 
\end{rem}
 Now, we study in more details the cases of real del Pezzo surfaces obtained as a double cover of some real surfaces ramified along a real curve.
In particular, the real part of these surfaces consist of spheres. The double cover assumption allows us to talk about the ``interior" of these spheres, enabling us to choose a suitable linear space $E$ in order to apply Proposition \ref{LinkLem}. 

\begin{prop}\label{Double}
 Let $Y\subset\pp^n$ be a smooth surface of degree $n-2$ contained in some hyperplane $H\subset\pp^n$. Let $p\in\pp^n$ be a real point that is not in $H$ and let $\tilde{Y}\subset\pp^n$ be the cone over $Y$ with apex $p$. Finally, let $X\subset\tilde{Y}$ be a smooth surface with $X(\R)\neq\emptyset$ that does not contain $p$ such that the projection $\pi_p: X\to Y$ is a double cover branched along the intersection $C$ of $Y$ with a quadratic hypersurface. 
 \begin{enumerate}
  \item If $C(\R)=\emptyset$ and $Y$ is hyperbolic, then $X$ is hyperbolic.
  \item If $C(\R)\neq\emptyset$ and $X(\R)$ is homeomorphic to the disjoint union of $n-2$ spheres, then $X$ is hyperbolic. Furthermore, for any $v\in X(\R)$ the embedding of $\Bl_v X$ to $\mathbb{P}^{n-1}$ obtained by projecting $X$ from $v$ is also hyperbolic.
 \end{enumerate}
\end{prop}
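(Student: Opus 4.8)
The plan is to reduce both parts to an application of \Cref{LinkLem} (or \Cref{thm: alt_linking1}), using the cone structure to produce an explicit center $E$ of the projection and to understand how it links with the components of $X(\R)$. Throughout, write $L=\langle p\rangle$ for the vertex point and recall that $\tilde Y$ is swept out by the lines joining $p$ to points of $Y$; the projection $\pi_p\colon\pp^n\dashrightarrow H=\pp^{n-1}$ restricts to the double cover $X\to Y$ on $X$ and to the identity-type embedding on $Y$. The branch locus $C\subset Y$ is where the two sheets of $X$ come together, and $X(\R)$ sits over $Y(\R)$ with the fiber over a point of $Y(\R)\setminus C(\R)$ consisting of either two real points or two complex conjugate points, while over $C(\R)$ it is a single real point.

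For part $(1)$: here $C(\R)=\emptyset$, so over every real point of $Y$ the fiber of $X\to Y$ is either two real points or zero real points, locally constant on connected components of $Y(\R)$. Since $Y$ is hyperbolic in $\pp^n$ with respect to some center $E_0\subset H$ of dimension $n-4$ (note $\codim(Y,\pp^n)=n-2$, so $\dim E_0=n-4$; a center inside $H$ of dimension one less than a hyperbolicity center for $Y$ inside $H\cong\pp^{n-1}$... actually $Y$ of degree $n-2$ in $\pp^n$ has $\dim Y=2$ so a hyperbolicity center has dimension $n-3$), I would choose as center for $X$ a linear space $E\subset\pp^n$ with $E\cap X=\emptyset$ built from the hyperbolicity center of $Y$; the key point is that a general $3$-plane $E'\supset E$ meets $X$ by first meeting $Y$ in $\deg Y=n-2$ real points (by hyperbolicity of $Y$) and then each such real point of $Y$ either contributes two real points of $X$ or none — but a counting/connectedness argument using that $X(\R)\neq\emptyset$ and the covering $X(\R)\to Y(\R)$ over components forces the fiber to be two real points on the relevant components, so $E'$ meets $X$ in $2(n-2)=\deg X$ real points. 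One then concludes hyperbolicity either directly or via \Cref{LinkLem} after checking the linking-number sum equals $\deg X$.

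For part $(2)$: now $C(\R)\neq\emptyset$ and $X(\R)$ is a disjoint union of $n-2$ spheres. I would use \Cref{S2}: for each sphere component $X_0$ of $X(\R)$, the double cover structure lets one identify the ``interior'' $W$ of $X_0$ (the $3$-disc bounded by $X_0$ inside $\pp^n(\R)$) as the preimage under $\pi_p$ of the disc in $Y(\R)$ bounded by the corresponding oval of $C(\R)$, fattened along the fiber direction toward $p$. Choosing the center $E$ to be (or to contain) the vertex $p$ together with a suitable $(n-4)$-plane in $H$ chosen so that $E(\R)$ passes through the interior of $Y(\R)$'s ovals exactly once per sphere, \Cref{S2} gives $|\lk(X_i,E(\R))|=2$ for each of the $n-2$ components, so $\sum_i|\lk(X_i,E(\R))|=2(n-2)=\deg X$ and \Cref{LinkLem} yields hyperbolicity of $X$. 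For the blow-up statement: projecting $X$ from a real point $v\in X(\R)$ realises $\Bl_v X$ in $\pp^{n-1}$; its real part is $s-1$ spheres plus one $\R\pp^2$ (the blow-up of a sphere at a real point), and $\deg(\Bl_v X)=\deg X-1=2(n-2)-1=2(n-3)+1$. I would verify the sectional genus drops appropriately and apply \Cref{thm:suff} (or re-run the linking argument, noting the new $\R\pp^2$ component is nontrivial in homology by \Cref{lem:notrv}) to conclude.

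The main obstacle I anticipate is the second step of each part: pinning down exactly which $(n-4)$-plane inside $H$ to adjoin to $p$ so that the resulting center $E$ is disjoint from $X$ \emph{and} its real locus links each sphere component exactly twice — this requires controlling the geometry of the branch curve $C$ on $Y$ and its ovals simultaneously with the hyperbolicity data of $Y$, and in part $(2)$ verifying that the $3$-disc $W$ bounded by a sphere component really does meet $E(\R)$ in a single point rather than an odd number $>1$ (which would still give an even linking number but not necessarily $\pm 2$). The cone structure is what makes this tractable: $W$ is essentially a cone-like region over a planar disc in $Y(\R)$, so a generic line through $p$ meets it once, and choosing $E\ni p$ reduces the count to a statement about $Y$ and $C$ in one lower dimension.
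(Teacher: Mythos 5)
Your part (1) follows the paper's route (join the hyperbolicity center of $Y\subset H$ with the apex $p$), but the step you leave vague is exactly the one that needs an argument: you must rule out that $f$ (writing $X=\tilde Y\cap\cV(x_0^2-f)$ with $p=[1:0:\cdots:0]$) is negative on some component of $Y(\R)$, since over such a component a real point of $Y$ contributes \emph{zero} real points of $X$ and the count $2(n-2)$ fails. The paper simply observes that $C(\R)=\emptyset$ forces $f$ to have constant sign on each component and then asserts $f>0$ on all of $Y(\R)$, so that $\pi_p|_X$ is real-fibered and every $(n-2)$-plane through $\langle E,p\rangle$ pulls back the $n-2$ real points of $Y$ to $2(n-2)$ real points of $X$. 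Your ``counting/connectedness argument on the relevant components'' does not close this gap.

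The more serious problem is your choice of center in part (2). You take $E=\langle p, E_0\rangle$ with $E_0\subset H$ of dimension $n-4$. Since $\tilde Y\setminus\{p\}=\pi_p^{-1}(Y)$, one has $E\cap\tilde Y=\{p\}\cup\pi_p^{-1}(E_0\cap Y)\cap E$. If $E_0\cap Y=\emptyset$ (the generic case, as $\dim E_0+\dim Y<n-1$), then $E\cap\tilde Y=\{p\}$; but $p$ does not lie in any of the regions $W_i=\{x\in\tilde Y(\R): x_0^2\leq f(x)\}$, so $E(\R)$ misses every $W_i$, all linking numbers vanish, and $X$ is \emph{not} hyperbolic with respect to $E$. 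If instead $E_0$ meets $Y$, then $E$ contains entire rulings of the cone, so $E(\R)\cap W_i$ is a segment rather than a point (\Cref{S2} is inapplicable) and $E$ meets $X$ itself wherever $f\geq0$ on $E_0\cap Y(\R)$, violating $E\cap X=\emptyset$. The apex must be kept \emph{out} of the center: the paper spans $E$ by one point $p_i$ chosen in the interior $W_i\setminus X_i$ of each of the $n-2$ regions, and then uses $\deg\tilde Y=n-2$ to conclude that these are the \emph{only} intersection points of $E$ with $\tilde Y$, hence $E\cap X=\emptyset$ and each $W_i$ is met exactly once, so \Cref{S2} and \Cref{LinkLem} apply. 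For the blow-up statement your proposed detour through \Cref{thm:suff} would require verifying nondegeneracy and the sectional genus for the projected surface; the paper instead lets the points $p_1=v_j\in W_1\setminus X_1$ degenerate to $v\in X_1$, obtaining a limiting center $\tilde E\ni v$ with respect to which every $(n-2)$-plane still meets $X$ only in real points, and then projects from $v$.
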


\begin{proof}
 Without loss of generality, we can assume that $H=\cV(x_0)$ and $p=[1:0:\cdots:0]$. Then there is a quadratic polynomial $f\in\R[x_1,\ldots,x_n]$ such that $X$ is the intersection of $\tilde{Y}$ with $\cV(x_0^2-f)$. In case $(1)$ the quadratic polynomial $f$ is strictly positive on the real part of $Y$ and the map $\pi_p$ is real-fibered. Thus if $Y$ is hyperbolic with respect to a linear subspace $E\subset H$, then $X$ is hyperbolic with respect to the subspace spanned by $E$ and $p$. Now assume that we are in case $(2)$. Consider the set $W\subset\tilde{Y}(\R)$ of all points $x$ with $x_0^2\leq f(x)$. This set has $n-2$ connected components $W_1,\ldots,W_{n-2}$ and the boundary of each $W_i$ is a connected component $X_i$ of $X(\R)$. Let $E$ be a linear space of dimension $n-3$ that intersects each $W_i\setminus X_i$ in (at least) one point $p_i$. Then since $\deg(\tilde{Y})=n-2$ there are no further intersection point of $E$ and $\tilde{Y}$ (note that this implies that $E$ is spanned by the $p_i$ and that $E\cap X=\emptyset$). Thus $E$ intersects each $W_i$ in exactly one point, so by \Cref{S2} we have $|\lk (X_i ; E(\R ))| = 2$ for $i=1,\ldots,n-2$. Therefore, by \Cref{LinkLem} the variety $X$ is hyperbolic with respect to $E$. For the additional statement assume without loss of generality that $v\in X_1$ and take $v_j\in W_1\setminus X_1$ a sequence of points that converges to $v$. Let $E_j$ be the linear subspace of dimension $n-3$ that is spanned by $v_j$ and $p_2,\ldots, p_{n-2}$. This sequence converges to the linear subspace $\tilde{E}$ that is spanned by $v$ and $p_2,\ldots, p_{n-2}$. Thus, since $X$ is hyperbolic with respect to each $E_j$, every linear subspace $E'$ of dimension $n-2$ that contains $\tilde{E}$ intersects $X$ only in real points. This implies that the image of $X$ under the projection from $v$ is hyperbolic with respect to the image of $\tilde{E}$ under this projection.
\end{proof}

\begin{ex}\label{cor:d4}
 The anticanonical divisor on $X= \mathbb{D}_4$ gives an embedding into $\mathbb{P}^4 = \mathbb{P}^{n+2}$ where $n=2$ is the number of spheres in $X(\R)$. The image is cut out by a pencil of quadrics. The corresponding complex pencil contains (counted with multiplicity) five singular quadrics. Thus the image of $\mathbb{D}_4$ is contained in at least one real singular quadric $Q\subset\pp^4$. The projection from the vertex of $Q$ realises $\mathbb{D}_4$ as a double cover of a quadratic hypersurface $Y\subset\pp^3$ ramified along a smooth curve $C$ of bidegree $(2,2)$. If $C(\R)=\emptyset$, then $Y=Q^{3,1}$ and we can apply part $(1)$ of \Cref{Double} to obtain a plane $E$ with respect to which the image of $ \mathbb{D}_4$ is hyperbolic. Otherwise, we can apply part $(2)$ of \Cref{Double}.
\end{ex}

\begin{ex}\label{cor:g2}
 Consider the embedding $\mathbb{G}_2 \rightarrow \mathbb{P}^6$ associated to $|-2K|$.
 We have $l(-2K) = 7$, hence $|-2K|$ embeds $\G_2$ into $\mathbb{P}^6 = \mathbb{P}^{n+2}$ for $n$ the number of spheres in the real part of $\mathbb{G}_2$. The canonical map $\G_2\to\pp^2$ is a double cover of $\mathbb{P}^2$ ramified along a smooth quartic curve with four connected components in its real part. Therefore, we can apply part $(2)$ of \Cref{Double} to $Y\subset\pp^5$ being the image of $\pp^2$ under second Veronese map.
\end{ex}

\begin{ex}\label{cor:g21}
 Now consider the embeddings $\mathbb{G}_2(1,0) \rightarrow \mathbb{P}^5$ associated to $|D_i|$ for $i=1,2$ with $D_1=-2K+E$ and $D_2=-4K-E$.
 By applying part $(2)$ of \Cref{Double} to the situation considered in \Cref{cor:g2}, we obtain a hyperbolic embedding of $\mathbb{G}_2(1,0)$ to $\pp^5$. 
 This correspond to the divisors $D_1$. The other divisor is obtained by applying the Bertini involution $\tau$ on $D_1$ ( \cite[\S 5 , Example 4]{russo}).
\end{ex}

\section{Conic bundles}
\label{conic_bundles}
In this section $\pi: X\to\pp^1$ will denote a geometrically irreducible smooth minimal conic bundle, i.e. each fiber of $\pi$ is isomorphic to a plane conic and the real Picard rank of $X$ is $2$. Assume that $X(\R)$ consists of $s$ spheres. Let $F$ denote a fiber of $\pi$. Then $\textrm{Pic}(X)$ is generated by $-K$ and $F$. We clearly have $F.F=0$. We further have $K.K=8-2s$ and $F.K=-2$, see \cite[p.~5]{calabi}. Applying the necessary criteria for the existence of a real-fibered morphism $X\to\pp^2$ from \Cref{thm:necsurf} to this situation, gives the following.

\begin{cor}\label{thm:necbund}
 Let $f: X\to\pp^2$ be a finite real-fibered morphism and $D=aF-bK$, $a,b\in\Z$, the corresponding ample divisor class. Then we have the following:
 \begin{enumerate}
  \item $b\geq1$;
  \item $a\geq-1$;
  \item $s=b\cdot((4-s)\cdot b+2a)$;
  \item $a+(4-s)\cdot b\leq 2$;
  \item $a\equiv s\cdot b \mod 2$;
  \item $2a>b(s-4)$;
 \end{enumerate}
\end{cor}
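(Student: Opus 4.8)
The strategy is to translate each item of \Cref{thm:necsurf} into the language of the conic bundle $X$ using the intersection-theoretic data $F.F=0$, $F.K=-2$ and $K.K=8-2s$, together with $D=aF-bK$. First I would record the basic computations: $D.D = (aF-bK).(aF-bK) = 2ab(F.(-K)) + b^2(K.K) = 4ab + b^2(8-2s)$, and $D.K = (aF-bK).K = a(F.K) - b(K.K) = -2a - b(8-2s) = -2a + b(2s-8)$. With these in hand, each numbered conclusion becomes a matter of substitution and bookkeeping.

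\textbf{Deriving the items.} Condition $(2)$ of \Cref{thm:necsurf} reads $D.D = r+2s$; since $X(\R)$ consists of $s$ spheres we have $r=0$ in the notation of that theorem and the surface-theoretic degree is $2s$. Hence $4ab + b^2(8-2s) = 2s$, i.e.\ $2s = 2b(2a + b(4-s)) = b(4a + 2b(4-s))$; dividing by $2$ and reorganising gives $s = b\cdot((4-s)\cdot b + 2a)$, which is $(3)$. Condition $(4)$ of \Cref{thm:necsurf} gives $r \le D.K + 4$, i.e.\ $0 \le -2a + b(2s-8) + 4$, which rearranges to $a + (4-s)\cdot b \le 2$, yielding $(4)$. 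The parity condition $D.K \equiv r \bmod 4$ from \Cref{thm:necsurf}$(4)$ becomes $-2a + b(2s-8) \equiv 0 \bmod 4$, i.e.\ $-2a + 2bs \equiv 0 \bmod 4$ (since $8b \equiv 0$), i.e.\ $a \equiv bs \bmod 2$, which is $(5)$. For $(6)$, one applies \Cref{thm:necsurf}$(5)$: the divisor $D$ is ample, hence $D.F > 0$ since $F$ is the class of a fiber, an effective divisor; and $D.F = (aF-bK).F = -b(K.F) = 2b$. Hmm --- that gives $b>0$ rather than $(6)$; so instead I would use a different effective curve. The class $-K$ is ample (as $X$ is del Pezzo, being a minimal conic bundle of the listed type) and is in particular effective, so $D.(-K) > 0$, i.e.\ $-D.K > 0$, i.e.\ $2a - b(2s-8) > 0$, i.e.\ $2a + b(8-2s) > 0$, i.e.\ $2a > b(2s-8) = 2b(s-4)$, hence $a > b(s-4)$; multiplying through, $2a > b(s-4)$ after dividing appropriately --- I need to be careful to land exactly on $2a > b(s-4)$, which follows directly from $D.(-K)>0$ once one divides by $2$. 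This gives $(6)$. For $(1)$, namely $b\ge 1$: since $D$ is ample and $D.F = 2b$, ampleness forces $2b>0$, so $b\ge 1$. For $(2)$, namely $a\ge -1$: this should follow by combining $(3)$ with the constraint that $D$ be ample, or more directly by testing $D$ against a suitable effective $(-1)$-curve or section of the conic bundle; I would look for a section $\Sigma$ with $\Sigma.F=1$ and $\Sigma.(-K)$ computable, forcing $D.\Sigma>0$ to give a lower bound on $a$. Alternatively, $(3)$ together with $(4)$ and $b\ge 1$ pins $a$ from below: from $(3)$, $2a = s/b - (4-s)b$, and feeding this into $(4)$ or using $s\ge 1$ and $b\ge 1$ should yield $a\ge -1$ after a short case analysis on $b$.

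\textbf{Main obstacle.} The bulk of the argument is routine substitution; the one place that requires genuine care is establishing $(1)$ and $(2)$, i.e.\ the lower bounds $b\ge 1$ and $a\ge -1$, since these do not come from \Cref{thm:necsurf} by pure substitution but require exhibiting the right effective (or ample) test classes on the conic bundle and invoking ampleness of $D$. I would handle $b\ge 1$ via $D.F = 2b > 0$ and then obtain $a\ge -1$ either by intersecting $D$ with a section class of the conic bundle or by combining the Diophantine identity $(3)$ with the inequalities $(4)$, $b\ge 1$, $s\ge 1$ and running through the finitely many small cases; the parity and quadratic relations then cut the possibilities down further, but for the statement at hand it suffices to verify the six displayed inequalities and congruences, which the above substitutions accomplish.
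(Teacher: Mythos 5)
Your derivations of items (1), (3), (4) and (5) match the paper's: $D.F=2b>0$ gives (1), and (3)--(5) are direct substitutions into \Cref{thm:necsurf}(2)--(4) with $r=0$. The two places where your argument does not go through are (6) and (2).

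For (6) your route via $D.(-K)>0$ has two problems. First, the justification "$-K$ is ample as $X$ is del Pezzo" is false in the generality of this corollary: the section treats arbitrary minimal conic bundles with $s$ spheres, and for $s\geq 5$ one has $K.K=8-2s<0$, so $X$ is not del Pezzo and $-K$ is neither ample nor (in general) effective, so \Cref{thm:necsurf}(5) cannot be applied to $L=-K$. Second, even granting $D.(-K)>0$, the computation gives $-D.K=2a+2b(4-s)>0$, i.e. $a>b(s-4)$ after dividing by $2$ --- not $2a>b(s-4)$. These are genuinely different assertions when $a=-1$ and $b(s-4)=-2$, so your sentence "which follows directly from $D.(-K)>0$ once one divides by $2$" does not close the gap. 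The intended argument is much simpler: ampleness gives $0<D.D=4ab+b^2(8-2s)=2b\bigl(2a+(4-s)b\bigr)$, and since $b>0$ this is exactly $2a>b(s-4)$. (Equivalently, (6) falls out of (1) and (3): $s=b\cdot(2a+(4-s)b)$ with $s>0$ and $b>0$ forces the second factor to be positive.)

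For (2) you list several possible strategies but carry none of them out; "should yield $a\geq -1$ after a short case analysis" is not a proof. The paper's argument is the second alternative you mention, executed as follows: rewrite (4) as $2a+(4-s)b\leq 2+a$, multiply by $b>0$ and use (3) to get $s=\bigl(2a+(4-s)b\bigr)\cdot b\leq (2+a)b$; since $s>0$ and $b>0$ this forces $2+a>0$, i.e. $a\geq -1$. You should include this (or an equivalent) explicitly; note in particular that no section class or $(-1)$-curve is needed.
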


\begin{proof}
 Since $F$ is effective and $D$ ample we must have $2b=F.D>0$. This shows $(1)$. By \Cref{thm:necsurf}(2) we have $$2s=D.D=4ab+b^2(8-2s)$$which shows $(3)$. By \Cref{thm:necsurf}(3) we have $$0\leq D.K+4=-2a-b\cdot(8-2s)+4$$which shows $(4)$. Part $(5)$ follows from \Cref{thm:necsurf}(4). Finally, part $(4)$ implies $$2a+(4-s)\cdot b\leq 2+a.$$ Multiplying this with $b$ and using part $(3)$ we obtain $$s=(2a+(4-s)\cdot b)\cdot b\leq(2+a)\cdot b.$$ Since $b,s>0$, we obtain $2+a>0$. For part $(6)$ we observe that since $D$ is ample we must have $$0<D.D=4ab+b^2(8-2s).$$ Since $b>0$, this shows $0<2a+b(4-s)$.
\end{proof}

\begin{lem}
\label{lem:RR_conic}
Let $D=aF-bK$ be an ample divisor class satisfying $(1)-(6)$ of \Cref{thm:necbund}. Then $\ell(D)\geq s+3$.
\end{lem}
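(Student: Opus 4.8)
The plan is to compute $\ell(D)$ via Riemann--Roch on the surface $X$, exactly as was done for del Pezzo surfaces in the proof of \Cref{thm:nec}, and then to bound the resulting expression from below using the constraints $(1)$--$(6)$ of \Cref{thm:necbund}. First I would observe that a minimal conic bundle $X$ is a rational surface, so its arithmetic genus is zero, and that $D - K = aF - (b+1)K$; since $D$ is ample, $F$ is nef and $-K$ is ample on a conic bundle over $\pp^1$ with $s \geq 1$ (or at least $D-K$ has positive intersection with every curve because $D$ is ample and $-K$ is nef on the fibres), the divisor $D-K$ should again be nef and big, so the Kawamata--Viehweg/Kodaira vanishing theorem gives $h^i(D) = 0$ for $i > 0$. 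Hence Riemann--Roch yields
\[
\ell(D) = \frac{1}{2}D.(D-K) + 1 = \frac{1}{2}\bigl(D.D - D.K\bigr) + 1.
\]

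Next I would substitute the intersection numbers. Using $F.F = 0$, $F.K = -2$, $K.K = 8 - 2s$ we get $D.D = 4ab + b^2(8-2s)$ and $D.K = -2a - b(8-2s)$, so
\[
\ell(D) = \frac{1}{2}\Bigl(4ab + b^2(8-2s) + 2a + b(8-2s)\Bigr) + 1 = 2ab + \tfrac12 b(b+1)(8-2s) + a + 1.
\]
By condition $(3)$ we have $4ab + b^2(8-2s) = 2s$, i.e. $D.D = 2s$, so the first two summands combine: $2ab + \tfrac12 b^2(8-2s) = s$. This leaves
\[
\ell(D) = s + \tfrac12 b(8-2s) + a + 1 = s + b(4-s) + a + 1.
\]
Now I must show $b(4-s) + a + 1 \geq 3$, i.e. $a + b(4-s) \geq 2$. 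But condition $(6)$ reads $2a > b(s-4)$, i.e. $2a + b(4-s) > 0$; combined with $(4)$, which is $a + b(4-s) \leq 2$, this alone is not quite enough, so I would instead argue via a parity/integrality refinement.

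The main obstacle, and the part requiring care, is precisely pinning down the inequality $a + b(4-s) \geq 2$ rather than merely $> 0$. The idea is to combine condition $(6)$, $a + b(4-s) > -a$ hence $a + b(4-s) \geq 1 - a$ is too weak; better: rewrite $(3)$ as $b\,(a + b(4-s)) = s - ab$, or more directly note $b\,(2a + b(4-s)) = 2s > 0$ with $b \geq 1$ forces $2a + b(4-s) \geq 1$, and then use the congruence $(5)$, $a \equiv sb \pmod 2$, to check that $2a + b(4-s) = 2a + 4b - sb$ has the same parity as $sb \equiv a$, hence... I would in fact take the cleaner route: since $\ell(D) = s + a + b(4-s) + 1$ must be a nonnegative integer and $D$ comes from (or is intended to come from) a morphism to $\pp^2$ one expects $\ell(D) \geq 3$; but to make the lemma self-contained I would show $a + b(4-s) \geq 2$ by noting from $(3)$ that $a + b(4-s)/1$... concretely, from $b\cdot(4-s)\cdot b + 2ab = s$ and $b\geq1$ we get $2a + b(4-s) = s/b + (4-s)(b-1) \cdot(\text{sign considerations})$; using $(4)$ to control $(4-s)(b-1)$ and $(2)$ that $a\geq -1$, one deduces $a + b(4-s) \geq 2$. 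I would present this final elementary case analysis (splitting on whether $s \leq 4$ or $s \geq 4$, and on small values of $b$) carefully, since it is the only nontrivial step; everything else is bookkeeping. Thus $\ell(D) \geq s + 3$, as claimed.
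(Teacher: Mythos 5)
Your overall strategy (Riemann--Roch on the surface plus arithmetic from the conditions of \Cref{thm:necbund}) is the same as the paper's, and your computation $\tfrac12 D.(D-K)+1=s+a+b(4-s)+1$ is correct. The first gap is the vanishing step. To get $\ell(D)=\chi(D)$ you invoke Kodaira/Kawamata--Viehweg for $D-K$, justified by ``$-K$ is ample'' or ``$D-K$ is nef because $D$ is ample and $-K$ is nef on the fibres.'' Neither justification works for a general minimal conic bundle: here $K^2=8-2s$, so $-K$ is not ample once $s\geq 4$ and is not even nef once $s\geq 5$ (a nef class has nonnegative self-intersection), and nefness of $-K$ on the fibre components says nothing about horizontal curves, so $(D-K).C>0$ is not forced for all curves $C$. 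The paper sidesteps all of this: for the stated \emph{inequality} one only needs $h^2(D)=\ell(K-D)=0$, which is immediate because $F$ is nef (pullback of a nef class) while $(K-D).F=-2(1+b)<0$, so $K-D$ cannot be effective; then $\ell(D)\geq\chi(D)$ with no control on $h^1$ required.

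The second, more serious gap is the inequality $a+b(4-s)\geq 2$ that you defer to a ``final elementary case analysis.'' This is \emph{not} a consequence of conditions $(1)$--$(6)$: for instance $(s,a,b)=(8,6,2)$ satisfies all six conditions --- $(3)$: $2\bigl((4-8)\cdot 2+12\bigr)=8$; $(4)$: $6+(4-8)\cdot 2=-2\leq 2$; $(5)$: $6\equiv 16 \bmod 2$; $(6)$: $12>8$ --- yet has $a+b(4-s)=-2$. Conditions $(4)$ and $(5)$ only force $a+b(4-s)$ to be an even integer at most $2$, and $(3)$, $(6)$ do not push it up to $2$, so no case analysis on the numerical conditions alone can close this step; you would need to input actual ampleness of $D$ on the surface beyond the listed conditions, or restrict to $D=(s-2)F-K$, the only divisor to which the lemma is later applied and for which equality in $(4)$ holds. (You have in fact isolated a point the paper's own proof glosses over: writing $\tfrac12 D.(D-K)+1=s+3$ there amounts to assuming equality in condition $(4)$. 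But identifying the difficulty is not the same as resolving it, and your proposal does not resolve it.)
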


\begin{proof}
 By Riemann--Roch and since $X$ is rational we have $$\ell(D)+\ell(K-D)\geq\frac{1}{2}D.(D-K)+1=s+3 .$$ Thus it suffices to show that $\ell(K-D)=0$. We note that $F$ is nef as the pullback of a nef divisor \cite[Exp.~1.4.4]{nefl}. But since we have $(K-D).F=-2(1+b)$, the divisor $K-D$ cannot be effective.
\end{proof}

This gives us a good candidate for a linear system giving rise to a hyperbolic embedding. Namely, for any fixed $s$, the divisor $D=(s-2)F-K$ satisfies all of the necessary conditions in \Cref{thm:necbund} and we have the following.

\begin{cor}
\label{cor: conic_very_ample}
 If $D=(s-2)F-K$ is very ample, then it gives rise to a hyperbolic embedding of $X$.
\end{cor}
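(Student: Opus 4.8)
The plan is to verify the hypotheses of \Cref{thm:suff} for the surface $X$ embedded via the very ample divisor $D=(s-2)F-K$, and then invoke that theorem directly. So first I would compute the relevant numerical invariants. Since $X(\R)$ consists of $s$ spheres, we are in the case $r=0$; thus the first hypothesis of \Cref{thm:suff}, namely $X(\R)\simeq s S^2\sqcup r\R\pp^2$ with $r\in\{0,1\}$, holds. Next, using $F.F=0$, $F.K=-2$ and $K.K=8-2s$, I would compute the degree of the embedded surface as
\[
D.D=((s-2)F-K).((s-2)F-K)=-2(s-2)F.K+K.K=4(s-2)+8-2s=2s,
\]
which matches $\deg(X)=2s+r=2s$ as required (this is of course just \Cref{thm:necbund}(3) with $a=s-2$, $b=1$).

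Second, I would pin down the ambient dimension $n$ and the sectional genus. The dimension of the ambient space is $n=\ell(D)-1$. By \Cref{lem:RR_conic} we have $\ell(D)\geq s+3$, and in fact since $D$ is very ample (hence ample) and $K-D$ is not effective (as shown in the proof of \Cref{lem:RR_conic}, because $(K-D).F<0$ while $F$ is nef), Riemann--Roch combined with Kodaira vanishing — exactly as in the proof of \Cref{thm:nec}, using that $D-K$ is ample since both $D$ and $-K$ are — gives the exact value $\ell(D)=\tfrac12 D.(D-K)+1$. Computing $D.(D-K)=D.D-D.K=2s-(-2(s-2)F.K+\dots)$; more directly $D.K=((s-2)F-K).K=(s-2)F.K-K.K=-2(s-2)-(8-2s)=-4$, so $D.(D-K)=2s+4$ and $\ell(D)=s+3$, hence $n=s+2$. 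The sectional genus is then computed via the Adjunction Formula \cite[V, Prop.~1.5]{Hart77}: $g=\tfrac12 D.(D+K)+1=\tfrac12(2s-4)+1=s-1=s+r-1$ since $r=0$. This is precisely the third hypothesis of \Cref{thm:suff}.

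Third, I would need to know that the embedding via $|D|$ is nondegenerate, which is automatic: an embedding by a complete linear system is nondegenerate, since a hyperplane containing the image would correspond to a section vanishing on $X$, contradicting that $\ell(D)$ is the full dimension. At this point all hypotheses of \Cref{thm:suff} are in place — $X(\R)\simeq sS^2\sqcup r\R\pp^2$ with $r=0\in\{0,1\}$, $\deg(X)=2s=2s+r$, $n=s+2$, and sectional genus $s-1=s+r-1$ — so \Cref{thm:suff} yields that $X\subset\pp^{s+2}$ is hyperbolic, i.e. there is a linear subspace $E$ of codimension $3$ with respect to which $X$ is hyperbolic, equivalently the linear projection $\pi_E\colon X\to\pp^2$ is real-fibered. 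This is exactly the claim.

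The main obstacle is really just bookkeeping: one must be careful that the very ampleness hypothesis (which is assumed, not proved, in the statement) genuinely supplies everything \Cref{thm:suff} needs, in particular that $D$ is then also ample so that $D-K$ is ample and Kodaira vanishing applies to force $\ell(D)=s+3$ and hence $n=s+2$ exactly rather than just $n\geq s+2$; if $n$ were larger, \Cref{thm:suff} would not apply verbatim. A secondary point worth checking is that the sectional genus computed from Adjunction indeed equals the genus of a \emph{smooth} hyperplane section, which holds since very ampleness together with Bertini guarantees a smooth irreducible hyperplane section whose genus is the arithmetic genus given by Adjunction. Once these are confirmed, the corollary is immediate from \Cref{thm:suff}.
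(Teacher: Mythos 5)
Your overall route is the same as the paper's: compute the sectional genus of $D=(s-2)F-K$ via adjunction (getting $s-1$), combine with \Cref{lem:RR_conic}, and feed everything into \Cref{thm:suff}. You were also right to flag that \Cref{thm:suff} needs $n=s+2$ \emph{exactly}, whereas \Cref{lem:RR_conic} only gives $\ell(D)\geq s+3$, i.e.\ $n\geq s+2$; this is a point the paper's own proof passes over silently.

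However, the way you close that gap is wrong. You assert that $D-K$ is ample ``since both $D$ and $-K$ are,'' importing the argument from \Cref{thm:nec}. That argument is specific to del Pezzo surfaces. Here $X$ is a minimal conic bundle with $K.K=8-2s$, so for $s\geq 4$ the anticanonical class is not ample (for $s\geq 5$ it is not even nef, since a nef class has nonnegative self-intersection), and the corollary is most interesting precisely for large $s$. So Kodaira vanishing cannot be invoked this way, and your derivation of $\ell(D)=s+3$ collapses in exactly the cases that matter. The conclusion is nevertheless true and can be rescued without any vanishing theorem: take a smooth irreducible $C\in|D|$ (Bertini, using very ampleness); by your adjunction computation $g(C)=s-1$, and $\deg(D|_C)=D.D=2s>2g(C)-2$, so $D|_C$ is nonspecial and $h^0(C,D|_C)=2s-(s-1)+1=s+2$. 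The restriction sequence $0\to\cO_X\to\cO_X(D)\to\cO_C(D|_C)\to 0$ then gives $h^0(D)\leq 1+(s+2)=s+3$, and equality holds by \Cref{lem:RR_conic}, so $n=s+2$ as required. With that substitution your proof is complete and agrees with the paper's.
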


\begin{proof}
 By the Adjunction Formula we compute the sectional genus of $D$ as $$\frac{1}{2}D.(D+K)+1=\frac{1}{2}((s-2)F-K).(s-2)F+1=s-1.$$ 
 Thus by \Cref{lem:RR_conic} we can apply \Cref{thm:suff}.
\end{proof}

\begin{rem}
The divisor $D=(s-2)F-K$ from \Cref{cor: conic_very_ample} is the only divisor for which equality holds in \Cref{thm:necsurf}(3).
\end{rem}

\begin{rem}
Observe that $\mathbb{D}_{4}$ and $\mathbb{D}_{2}$ are minimal conic bundles with $s=2$ and $s=3$ respectively. The only divisors of these surfaces corresponding to real-fibered morphisms are of the form $(s-2)F-K$ (\Cref{tb:list}). Note that in the case of $\D_2$ we have two different divisors which is due to the fact that $\D_2$ can be equipped with two different structures of a conic bundle. In both cases, these divisors are very ample.
\end{rem}

We focus on the divisor class $D=(s-2)F-K$ on a minimal conic bundle $X$. We first observe the existence of a particular rank $3$ bundle $\mathcal{E}$ for any given $X$ by \Cref{calc}. 
Then we look at some minimal conic bundles for which $D$ is always very ample.
\begin{prop}
\label{calc}
There is a vector bundle $\mathcal{E}$ of rank 3 on $\pp^1$ with first Chern class $s$ and a section $t$ of $\mathcal{O}_{\mathbb{P}(\mathcal{E})} (2)$ such that $X$ is the zero set of $t$ and $\pi$ is the restriction of the natural projection $f:\mathbb{P} (\mathcal{E})\rightarrow \pp^1$ to $X$.
Furthermore, the restriction $\mathcal{O}_{\mathbb{P}(\mathcal{E})} (1)|_X$ corresponds to the class $(s-2)F-K$.
\end{prop}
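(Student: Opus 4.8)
The plan is to take the divisor class $D=(s-2)F-K$ itself as the relatively very ample class: set $\mathcal{L}=\mathcal{O}_X((s-2)F-K)$ and $\mathcal{E}:=\pi_*\mathcal{L}$. Note first that $\mathcal{L}.F=((s-2)F-K).F=-F.K=2$, so $\mathcal{L}$ should restrict to a degree-two class on each fibre. I would begin by showing $\mathcal{E}$ is locally free of rank $3$ with $R^1\pi_*\mathcal{L}=0$. For this one computes $h^0(F_x,\mathcal{L}|_{F_x})$ and $h^1(F_x,\mathcal{L}|_{F_x})$ on every fibre $F_x$ of $\pi$: if $F_x$ is a smooth conic then $\mathcal{L}|_{F_x}\cong\mathcal{O}_{\pp^1}(2)$; if $F_x=L_1\cup L_2$ is a reduced union of two lines, then $F.L_i=0$ and $\mathcal{L}.L_i=-K.L_i=1$, so $\mathcal{L}|_{F_x}$ has degree $1$ on each component; in both cases $h^0=3$ and $h^1=0$ (for the nodal fibre via the normalization sequence $0\to\mathcal{L}|_{F_x}\to\mathcal{L}|_{L_1}\oplus\mathcal{L}|_{L_2}\to\mathcal{L}|_{\{\mathrm{node}\}}\to 0$). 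One also notes that smoothness of $X$ forces every fibre to be reduced --- a non-reduced component $mL$ with $m\geq2$ would give $L^2=0$ and $L.K=-2/m$, which together with the adjunction formula on $L$ is impossible for integer reasons. Grauert's base change theorem then yields that $\mathcal{E}=\pi_*\mathcal{L}$ is locally free of rank $3$, that $R^1\pi_*\mathcal{L}=0$, and that $\mathcal{E}\otimes k(x)\xrightarrow{\sim}H^0(F_x,\mathcal{L}|_{F_x})$ for all $x$.

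Next I would compute $c_1(\mathcal{E})=\deg\mathcal{E}$ by comparing Euler characteristics. Since $R^1\pi_*\mathcal{L}=0$ and the fibres of $\pi$ are curves, the Leray spectral sequence gives $\chi(\pp^1,\mathcal{E})=\chi(X,\mathcal{L})$. Riemann--Roch on $\pp^1$ gives $\chi(\pp^1,\mathcal{E})=\deg\mathcal{E}+3$, while Riemann--Roch on the (rational, so $\chi(\mathcal{O}_X)=1$) surface $X$, together with $F^2=0$, $F.K=-2$ and $K^2=8-2s$, gives $\chi(X,\mathcal{L})=\tfrac12\mathcal{L}.(\mathcal{L}-K)+1=s+3$. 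Hence $\deg\mathcal{E}=s$.

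Now I would produce the embedding. The base change property gives a surjection $\pi^*\mathcal{E}\twoheadrightarrow\mathcal{L}$, hence (in Grothendieck's convention $f_*\mathcal{O}_{\pp(\mathcal{E})}(1)=\mathcal{E}$) a morphism $\iota\colon X\to\pp(\mathcal{E})$ over $\pp^1$ with $\iota^*\mathcal{O}_{\pp(\mathcal{E})}(1)\cong\mathcal{L}$ and $f\circ\iota=\pi$. Since $\mathcal{L}|_{F_x}$ is very ample on $F_x$ for every $x$ and $\pi_*\mathcal{L}$ commutes with base change, $\iota$ is fibrewise a closed immersion; being moreover proper and universally injective with vanishing relative differentials (checked fibrewise via the right-exact sequence $\iota^*\Omega_{\pp(\mathcal{E})/\pp^1}\to\Omega_{X/\pp^1}\to\Omega_{X/\pp(\mathcal{E})}\to 0$), it is a closed immersion. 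Regarding $X\subset\pp(\mathcal{E})$, and using $\Pic(\pp(\mathcal{E}))=\Z\xi\oplus\Z\,f^*[\mathrm{pt}]$ with $\xi=c_1(\mathcal{O}_{\pp(\mathcal{E})}(1))$, the fact that $X$ meets each $\pp^2$-fibre of $f$ in a conic gives $[X]=2\xi+m\,f^*[\mathrm{pt}]$ for some $m\in\Z$. To pin down $m$ I would use adjunction, together with the relative Euler sequence which gives $K_{\pp(\mathcal{E})}=-3\xi+(c_1(\mathcal{E})-2)f^*[\mathrm{pt}]=-3\xi+(s-2)f^*[\mathrm{pt}]$, and the identifications $\xi|_X=\mathcal{L}=(s-2)F-K$ and $f^*[\mathrm{pt}]|_X=F$:
\[
K=K_X=(K_{\pp(\mathcal{E})}+[X])\big|_X=-\xi|_X+(s-2+m)\,f^*[\mathrm{pt}]|_X=-\mathcal{L}+(s-2+m)F=K+mF,
\]
whence $mF\sim 0$ and so $m=0$. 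Therefore $\mathcal{O}_{\pp(\mathcal{E})}(X)\cong\mathcal{O}_{\pp(\mathcal{E})}(2)$, i.e. $X$ is the zero locus of a section $t$ of $\mathcal{O}_{\pp(\mathcal{E})}(2)$, and by construction $\pi=f|_X$ and $\mathcal{O}_{\pp(\mathcal{E})}(1)|_X=\mathcal{L}=(s-2)F-K$.

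The step I expect to be the main obstacle is not the numerology but the verification that $\mathcal{L}$ is \emph{relatively very ample}, i.e. that $\iota$ is a genuine closed immersion rather than merely a finite birational morphism onto its image --- this forces one to control the degenerate fibres of $\pi$ (that they are reduced pairs of distinct lines, and that $\mathcal{O}(1)$ restricts to a very ample class on such a nodal conic) and to feed the correct hypotheses into the base change theorem. Once relative very ampleness and $c_1(\mathcal{E})=s$ are in hand, the identification $[X]=2\xi$ via adjunction and the remaining assertions are routine.
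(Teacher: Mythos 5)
Your proof is correct, but it takes a genuinely different route from the paper. The paper imports the existence of an embedding of $X$ into a $\pp^2$-bundle $\pp(\cE')$ over $\pp^1$ from the literature (Koll\'ar--Smith--Corti), computes in the Chow ring $\Z[H,E]/(E^2,H^3-cEH^2)$ that $[X]=2H+aE$ and $K.K=8-3a-2c$, deduces $a=2b$ and $s=3b+c$, and then normalises by replacing $\cE'$ with $\cE=\cE'(b)$ so that $c_1(\cE)=s$ and $[X]=2H'$; the restriction $\cO_{\pp(\cE)}(1)|_X=xF+yK$ is then pinned down a posteriori by intersecting with $F$ and $K$ in both Chow rings. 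You instead build $\cE$ directly as $\pi_*\cO_X((s-2)F-K)$, which makes the identification $\cO_{\pp(\cE)}(1)|_X=(s-2)F-K$ true by construction and yields $c_1(\cE)=s$ from a clean $\chi$-comparison (your $\chi(X,\cL)=s+3$ is consistent with the paper's Lemma on $\ell(D)$); the class $[X]=2\xi$ then drops out of adjunction. The price you pay is having to verify relative very ampleness of $\cL$, which requires the fibre analysis (reducedness of all fibres, $h^0=3$ and $h^1=0$ on smooth and nodal conics, Grauert/cohomology-and-base-change, and the fibrewise-immersion criterion); all of these steps are carried out correctly, including the degree-$(1,1)$ computation on the split fibres via $L_i^2=-1$, $K.L_i=-1$. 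Your construction has the mild additional virtue of being manifestly compatible with the real structure, since $\cL$ is a real divisor class and pushforward commutes with the Galois action, whereas the paper relies on the cited construction being available over $\R$.
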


\begin{proof}
By \cite[Ex. 3.13.4]{nearlyrat}, there is a rank 3 vector bundle $\mathcal{E} '$ on $\pp^1$ and an embedding $X\rightarrow \pp^1$ where $X$ is realised as a family of conics in the projective plane fibers of $\pp (\mathcal{E} ')$. From  \cite[\S 9.3]{3264}, the Chow ring of $\pp (\mathcal{E} ')$ is given by $\Z [H,E] / (E^2 , H^3 - cEH^2)$, where $H$ is the class of the line bundle $\mathcal{O}_{\pp (\mathcal{E} ')} (1)$, the class $E$ is a fiber of the map $\pp (\mathcal{E} ') \rightarrow \pp^1$ and $c$ is the first Chern class of $\mathcal{E} '$.
The class of a point is $EH^2$.
Because $X$ has codimension $1$ in $\pp(\cE')$, its class of $X$ in the Chow ring of $\pp (\mathcal{E} ')$ must be of the form $aE+a'H$ for some integers $a,a'$. Because the intersection of $X$ with a fiber $E$ is a plane conic, we must have $a'=2$.
The canonical class on $\pp (\mathcal{E} ')$ is $(c-2)E-3H$.
Thus by the Adjunction Formula \cite[Prop.~1.33]{3264} the canonical class of $X$ can be obtained by intersecting $((a+c-2)E-H)$ with $X$. This implies that we have 
\[ K.K  = (( a+c -2)E-H)^2.(2H+aE)=8-3a-2c. \]
On the other hand we know that $K.K = 8-2s$ which implies that $s=3b+c$ where $b$ is an integer satisfying $2b=a$.
Consider the vector bundle $\mathcal{E} = \mathcal{E}'(b)$ on $\pp^1$.
The first Chern class of $\mathcal{E}$ is $c+3b=s$ \cite[Prop.~5.17]{3264} and $\pp (\mathcal{E} )$ is isomorphic to $\pp (\mathcal{E} ')$ as scheme over $\pp^1$ \cite[Cor.~9.5]{3264}.
Finally, we have that $\mathcal{O}_{\mathbb{P}(\mathcal{E})} (1) = \mathcal{O}_{\mathbb{P}(\mathcal{E} ')} (1) \otimes f^* \mathcal{O}_{\pp^1}(b)$ \cite[Cor.~9.5]{3264}, i.e., the zero set of a nonzero section in $\mathcal{O}_{\mathbb{P}(\mathcal{E})} (1)$ has the class $H+bE$.
This shows that the class corresponding to $\mathcal{O}_{\mathbb{P}(\mathcal{E})} (2)$ is $2H+2bE$, the class of $X$.
In order to prove the additional statement we can, after replacing $\mathcal{E}'$ by $\mathcal{E}$, assume without loss of generality that $c=s$ and $a=0$.
Let $xF+yK$ be the class of $\mathcal{O}_{\mathbb{P}(\mathcal{E})} (1)|_X$.
On one hand, we have $(xF+yK).F = -2y$ in the Chow ring of $X$.
We can compute the same number in the Chow ring of $\pp (\mathcal{E})$ as 
\[ 2H.H.E = 2 \]
which shows that $y=-1$.
Similarly, we have in the Chow ring of $X$ that $(xF-K).K = -2x-8+2s$.
This can be computed in the Chow ring of $\pp (\mathcal{E})$ as 
\[ 2H.H.((s-2)E-H) = -4   \]
which implies $x=s-2$.
\end{proof}

\begin{rem}
Conversely, let $\mathcal{E}$ be a vector bundle of rank 3 on $\pp^1$ with first Chern class $s$ and $X$ the smooth zero set of a section of $\mathcal{O}_{\pp (\mathcal{E})}(2)$.
Then clearly the restriction of the natural projection $\pp (\mathcal{E}) \rightarrow \pp^1$ to $X$ gives $X$ the structure of a conic bundle.
A direct computation as in the proof of Proposition \ref{calc} shows that $X$ has $2s$ singular fibers and that the restriction of $\cO_{\pp(\cE)}(1)$ to $X$ corresponds to the divisor class $(s-2)F-K$.
\end{rem}

Let $\mathcal{E}$ be a vector bundle as in \Cref{calc}. By Grothendieck's splitting theorem (\cite[Theorem 6.29]{3264}) we have $\mathcal{E}=\mathcal{O}_{\pp^1}(a_1)\oplus\mathcal{O}_{\pp^1}(a_2)\oplus\mathcal{O}_{\pp^1}(a_3)$ where the integers $a_i$ sum up to $s$. If each $a_i>0$, then $\mathcal{O}_{\pp(\mathcal{E})}(1)$ is very ample \cite[Proof of Cor.~9.9]{3264}, and therefore $D$ is also very ample on $X$. It follows that one can construct explicit examples in which $D$ is very ample and embeds $X$ in some projective space as a hyperbolic variety, see \Cref{ex2_final}.

\begin{ex}[Very ample]
\label{ex2_final}
Consider the zero set $X$ of the bihomogeneous polynomial 
\[ G = uvx_0^2 + (u^2-v^2)x_1^2 + (u^2-4v^2)x_2^2   \]
inside $\pp^1 \times \pp^2$.
Clearly $X$ is a conic bundle with $2s = 6$ singular fibers.
Letting $\mathcal{E}=\mathcal{O}_{\pp^1}(1)^3$ we have $\pp (\mathcal{E}) = \pp^1 \times \pp^2$ and $G$ is a global section of $\mathcal{O}_{\pp (\mathcal{E})} (2)$.
The Segre embedding of $\pp^1 \times \pp^2$ to $\pp^5$ is the embedding associated to $\mathcal{O}_{\pp (\mathcal{E})} (1)$.
Via this embedding $X$ is hyperbolic.

\end{ex}
Here is an example in which the very ampleness fails.
\begin{ex}[Not very ample]
\label{ex1_final}
Consider the vector bundle $\mathcal{E} = \mathcal{O}_{\pp^1}^2 \oplus \mathcal{O}_{\pp^1} (2)$ and let $\pp (\mathcal{E}) = \Proj (\Sym \mathcal{E})$ the associated projective plane bundle over $\pp^1$.
We define $X\subset \pp (\mathcal{E})$ to be the Zariski closure of the zero set of 
\[ (t^3 - t)x_0^2 + x_1^2 + x_2^2   \]
in a chart $\mathbb{A}^1 \times \mathbb{P}^2 \subset \pp (\mathcal{E}).$
One can check that $X$ is a smooth conic bundle with singular fibers at $t=-1,0,1,\infty$.
It is a desingularisation of the intersection of 
\[ S(0,0,2) = \mathcal{V}(y_0 y_2 -y_1^2) \subset \pp^4   \]
with the quadratic hypersurface defined by 
\[ y_3^2 + y_4^2 - y_0 y_1 + y_1 y_2 = 0.  \]
For example using the package \texttt{Divisor} \cite{DivisorSource, DivisorArticle} for the computer algebra system \texttt{Macaulay2} \cite{M2} one can show that in this case $-K$ is not very ample.
\end{ex}

If not all of the $a_i$ are positive, we can still get a hyperbolic embedding under some conditions.

\begin{prop}
Let $\mathcal{E} = \mathcal{O}_{\pp^1} \oplus \mathcal{O}_{\pp^1}(a_1) \oplus \mathcal{O}_{\pp^1}(a_2)$ be the vector bundle from Proposition \ref{calc} with $0 < a_1 \leq a_2$ (and $a_1 + a_2 = s$ by Proposition \ref{calc}).
Then we have an hyperbolic embedding $X \rightarrow \pp^{s+2}$ if the image of $X$ does not contain the vertex of $S(0,a_1,a_2)$. 
\end{prop}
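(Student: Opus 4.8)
The plan is to deduce this proposition from \Cref{Double}, part (2). First I would set up the geometry of the scroll. By Grothendieck's splitting and the hypothesis $0<a_1\le a_2$, the bundle $\mathcal{E}=\mathcal{O}_{\pp^1}\oplus\mathcal{O}_{\pp^1}(a_1)\oplus\mathcal{O}_{\pp^1}(a_2)$ is globally generated but not ample. Hence $\mathcal{O}_{\pp(\mathcal{E})}(1)$ is base-point-free and defines a morphism $\phi\colon\pp(\mathcal{E})\to\pp^{s+2}$, where $h^0(\pp^1,\mathcal{E})=1+(a_1+1)+(a_2+1)=s+3$. It is classical (see \cite{3264}) that the image of $\phi$ is the $3$-dimensional rational normal scroll $\tilde Y=S(0,a_1,a_2)$ of degree $s$; because the first entry is $0$, this scroll is the cone with a single apex $p$ over the smooth surface scroll $Y=S(a_1,a_2)\subset H\cong\pp^{s+1}$ of degree $s=a_1+a_2$. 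Concretely, $\phi$ restricts on each fibre $\pp^2$ of $\pp(\mathcal{E})\to\pp^1$ to an isomorphism onto the plane spanned by $p$ and the corresponding ruling line of $Y$, it is injective and unramified away from the section $\sigma\subset\pp(\mathcal{E})$ attached to the projection $\mathcal{E}\to\mathcal{O}_{\pp^1}$ onto the trivial summand, and it contracts $\sigma$ to the apex $p$.

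Second, I would translate the hypothesis of the proposition. Since $\phi$ is injective outside $\sigma$ and collapses $\sigma$ to $p$, the image $\phi(X)$ contains $p$ if and only if $X\cap\sigma\neq\emptyset$; thus the assumption ``$\phi(X)$ does not contain the vertex of $S(0,a_1,a_2)$'' says exactly that $X\cap\sigma=\emptyset$. Because $\phi$ is a closed immersion on $\pp(\mathcal{E})\setminus\sigma$, it follows that $\phi|_X\colon X\hookrightarrow\pp^{s+2}$ is a closed embedding, whose image lies on $\tilde Y$, and which by \Cref{calc} realises the divisor class $D=(s-2)F-K$ (so in particular $D$ is very ample on $X$ under this hypothesis). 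This gives us the embedding $X\to\pp^{s+2}$ claimed in the statement; it remains to prove hyperbolicity.

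Third, I would verify the remaining hypotheses of \Cref{Double}(2) with $n=s+2$: the surface $Y\subset H\cong\pp^{s+1}$ is smooth of degree $s=n-2$, the apex $p$ lies off $H$, $\tilde Y$ is the cone over $Y$ with apex $p$, and $X\subset\tilde Y$ is a smooth surface with $X(\R)\neq\emptyset$ and $p\notin X$. The linear projection $\pi_p\colon\tilde Y\dashrightarrow Y$ pulls back via $\phi$ to the natural projection $\pp(\mathcal{E})\setminus\sigma\to Y$ forgetting the $\mathcal{O}_{\pp^1}$-coordinate; restricted to $X$ this is finite of degree $2$, being on each conic fibre of $X\to\pp^1$ the projection of a plane conic from a point not on it. Writing the section $G\in H^0(\mathcal{O}_{\pp(\mathcal{E})}(2))$ cutting out $X$ as $x_0^2+g_1(x_1,x_2)\,x_0+g_2(x_1,x_2)$ — the $x_0^2$-coefficient being a nonzero constant precisely because $p\notin X$ — and completing the square in $x_0$ identifies $\pi_p|_X\colon X\to Y$ as the double cover branched along $Y\cap\{Q=0\}$ with $Q=\tfrac14 g_1^2-g_2$; a short degree count shows the coefficients of $Q$ lie in $H^0(\mathcal{O}(2a_1))$, $H^0(\mathcal{O}(a_1+a_2))$, $H^0(\mathcal{O}(2a_2))$, exactly the spaces cut out by a quadric hypersurface of $\pp^{s+1}$ on $Y$. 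Finally, the branch curve $C=Y\cap\{Q=0\}$ cannot have empty real part: otherwise $X(\R)\to Y(\R)$ would be an unramified double cover and $X(\R)$ a union of copies of $Y(\R)$, which is a circle bundle over a circle, contradicting that $X(\R)$ is a disjoint union of $s$ spheres. Hence we are in case (2) of \Cref{Double} with $X(\R)\simeq sS^2$ and $n-2=s$, and that proposition yields that $X$ is hyperbolic.

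I expect the main obstacle to be the careful bookkeeping in the second and third steps rather than any single deep idea: namely, confirming that $\phi$ is genuinely an isomorphism from $\pp(\mathcal{E})\setminus\sigma$ onto $\tilde Y\setminus\{p\}$ (so that $\phi|_X$ is an embedding when $X$ misses $\sigma$), and checking that the discriminant of the degree-two map $X\to Y$ is cut on $Y$ by an honest quadric of the ambient $\pp^{s+1}$. Once these are in place, the conclusion is immediate from \Cref{Double}.
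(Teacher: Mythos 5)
Your argument is correct, but it reaches hyperbolicity by a genuinely different route than the paper. Both proofs begin identically: $\cO_{\pp(\cE)}(1)$ gives an immersion $\varphi:\pp(\cE)\to\pp^{s+2}$ onto $S(0,a_1,a_2)$ that contracts exactly the section $\sigma$ attached to the trivial summand, so the hypothesis that $\varphi(X)$ misses the vertex is precisely what makes $\varphi|_X$ a closed embedding, i.e.\ $D=(s-2)F-K$ very ample. At that point the paper simply invokes \Cref{cor: conic_very_ample}, which rests on the sectional-genus computation and the $M$-curve hyperplane-section criterion of \Cref{thm:suff} (hence ultimately on \Cref{thm: alt_linking1}). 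You instead feed the geometry into \Cref{Double}(2): you exhibit $\pi_p|_X:X\to S(a_1,a_2)$ as a double cover branched along a quadric section --- the completion of the square, the observation that the $x_0^2$-coefficient is a nonzero constant exactly when $X\cap\sigma=\emptyset$, and the surjectivity of restriction of quadrics of $\pp^{s+1}$ onto $H^0(\Sym^2(\cO_{\pp^1}(a_1)\oplus\cO_{\pp^1}(a_2)))$ are all sound --- and you rule out $C(\R)=\emptyset$ because otherwise $X(\R)$ would be empty or an unramified double cover of the torus/Klein bottle $Y(\R)$, hence of Euler characteristic zero rather than a union of $s$ spheres (your phrase ``a union of copies of $Y(\R)$'' is slightly imprecise for a connected cover, but the conclusion stands). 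What the paper's route buys is brevity, since \Cref{cor: conic_very_ample} is proved two results earlier in the same section; what yours buys is explicitness, since the proof of \Cref{Double}(2) actually produces a concrete codimension-$3$ center of hyperbolicity $E$ meeting each bounded region $W_i$ once, in the spirit of \Cref{explicit}. Both arguments are complete.
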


\begin{proof}
The morphism induced by $\mathcal{O}_{\pp (\mathcal{E})} (1)$ is an immersion \[ \varphi : \pp (\mathcal{E}) \rightarrow \pp^{s+2} = \pp^{a_1 +a_2 + 2} . \] 
The image of $\pp (\mathcal{E})$ via $\varphi$ is the rational normal scroll $S (0,a_1 ,a_2)$, which is a cone over the rational normal scroll $S(a_1 , a_2)$ \cite[\S 9.1]{3264}.
The immersion $\varphi$ is an embedding except for the vertex of the cone.
Since $X$ is given as the zero set of a section of $\mathcal{O}_{\pp (\mathcal{E})} (2)$, the restriction of $\varphi$ to $X$ is an immersion, which is an embedding if $\varphi (X)$ does not contain the vertex of $S(0, a_1 , a_2)$.
Since the restriction $\mathcal{O}_{\pp (\mathcal{E})} (1) |_X$ is associated to the divisor $D = (s-2)F-K$ on $X$ by Proposition \ref{calc}, and by Corollary \ref{cor: conic_very_ample}, the embedding $\varphi |_X$ is hyperbolic. 
\end{proof}

We conclude this section with the following question on the topology of hyperbolic surfaces.

\begin{qu}
 For which pairs $(s,r)$ does a smooth irreducible hyperbolic surface $X\subset\pp^n$ exist such that $X(\R)$ is homeomorphic to the disjoint union of $s$ spheres and $r$ real projective planes?
\end{qu}

From \cite[Theorem 5.2]{HV07} it is known that for arbitrary $s\in\Z_{\geq0}$ and $r\in\{0,1\}$ there is a smooth irreducible hyperbolic hypersurface $X\subset\pp^3$ such that $X(\R)$ is homeomorphic to the disjoint union of $s$ spheres and $r$ real projective planes. Moreover, when $s\in\{0,1\}$, then we must have $r\in\{0,1\}$ as well. This follows from \cite[Lemmas 2.16 and 2.17]{sep} by intersecting $X$ with a generic hyperplane containing the space of hyperbolicity. We want to show that for $s\geq3$, we can have arbitrary $r\in\Z_{\geq0}$.

\begin{prop}
 Let $s\geq3$ and $r\geq0$. There exists a smooth irreducible hyperbolic surface $X\subset\pp^n$ such that $X(\R)$ is homeomorphic to the disjoint union of $s$ spheres and $r$ real projective planes.
\end{prop}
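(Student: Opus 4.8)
The plan is to build $X$ as the blow-up at $r$ real points of a real minimal conic bundle with $s+r$ spheres, embedded by its tautological scroll embedding, and to carry hyperbolicity through the blow-ups by the limiting argument used for the additional statement of \Cref{Double}. Set $m=s+r$; since $s\geq3$ we have $m\geq3$, so we may pick $\mathcal{E}=\mathcal{O}_{\pp^1}(a_1)\oplus\mathcal{O}_{\pp^1}(a_2)\oplus\mathcal{O}_{\pp^1}(a_3)$ with each $a_i\geq1$ and $a_1+a_2+a_3=m$. By the discussion following \Cref{calc}, a suitably chosen real section of $\mathcal{O}_{\pp(\mathcal{E})}(2)$ has smooth zero set $X_0$, a minimal conic bundle with $2m$ singular fibres, and one can arrange the real coefficients so that $X_0(\R)$ consists of exactly $m$ spheres $X_1,\dots,X_m$: the $2m$ real singular fibres cut $\pp^1(\R)=S^1$ into $m$ arcs over which the fibre conic is a real ellipse. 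Since each $a_i\geq1$, $\mathcal{O}_{\pp(\mathcal{E})}(1)$ is very ample, so the tautological map embeds $\pp(\mathcal{E})$ as the rational normal scroll $S\subset\pp^{m+2}$, a threefold of degree $m$, and hence embeds $X_0\hookrightarrow\pp^{m+2}$.

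Next I would exhibit the bounding discs. Over each arc, the real ellipse in the $\pp^2(\R)$-fibre bounds a closed elliptical disc (its ``inside''), and the union of these discs over the arc is a closed $3$-ball $W_i\subset\pp(\mathcal{E})(\R)\subset S(\R)$ with $\partial W_i=X_i$; the $W_i$ are pairwise disjoint. Fix general real points $v_1\in X_1,\dots,v_r\in X_r$ and, for $1\leq a\leq r$, choose sequences $v_a^{(k)}$ in the interior of $W_a$ converging to $v_a$; also fix an interior point $p_i$ of $W_i$ for $r<i\leq m$. Let $E^{(k)}\subset\pp^{m+2}$ be the linear span of $v_1^{(k)},\dots,v_r^{(k)},p_{r+1},\dots,p_m$; for general choices this is a linear space of codimension $3$. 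Since $S$ has degree $m$ and these $m$ points already lie on $S$, we get $E^{(k)}\cap S=\{v_1^{(k)},\dots,v_r^{(k)},p_{r+1},\dots,p_m\}$; in particular $E^{(k)}\cap X_0=\emptyset$ and $E^{(k)}$ meets each $W_i$ in exactly one point. By \Cref{S2}, $|\lk(X_i,E^{(k)}(\R))|=2$ for all $i$, so $\sum_i|\lk(X_i,E^{(k)}(\R))|=2m=\deg X_0$, and \Cref{LinkLem} shows $X_0$ is hyperbolic with respect to $E^{(k)}$.

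Finally, letting $k\to\infty$ the spaces $E^{(k)}$ converge to the codimension-$3$ linear space $E^{(\infty)}$ spanned by $v_1,\dots,v_r,p_{r+1},\dots,p_m$, which contains the $\pp^{r-1}$ spanned by $v_1,\dots,v_r$. Exactly as in the proof of the additional statement of \Cref{Double}, hyperbolicity with respect to each $E^{(k)}$ forces every codimension-$2$ linear space containing $E^{(\infty)}$ to meet $X_0$ only in real points; projecting $\pp^{m+2}$ from $\operatorname{span}(v_1,\dots,v_r)=\pp^{r-1}$ then realises $X:=\Bl_{v_1,\dots,v_r}X_0$ as a smooth surface in $\pp^{m+2-r}=\pp^{s+2}$ that is hyperbolic with respect to the image of $E^{(\infty)}$. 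Since blowing up a smooth real surface at a real point on a sphere component replaces that sphere by $\R\pp^2$ (a disc glued to a Möbius band) and leaves the other components unchanged, $X(\R)\simeq s\,S^2\sqcup r\,\R\pp^2$ with $s=m-r\geq3$, as required.

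The step I expect to be the main obstacle is checking that the projection from $\operatorname{span}(v_1,\dots,v_r)$ really is an embedding of $\Bl_{v_1,\dots,v_r}X_0$ — equivalently, that this $\pp^{r-1}$ meets $X_0$ exactly in $\{v_1,\dots,v_r\}$, carries no trisecant or tangent–secant line of $X_0$ off these points, and that $E^{(\infty)}$ has the expected dimension; conic bundles do carry positive-dimensional families of trisecant lines, so this needs a careful general-position argument, and presumably it is exactly this point (the target $\pp^{s+2}$ must be large enough) that forces $s\geq3$. An alternative that sidesteps the first difficulty is to perform the $r$ blow-ups one at a time, choosing a general point of a sphere component at each stage and re-running the $3$-ball/linking argument on the successive projected surfaces; one then has to verify that the images of the $W_i$ remain (topological) $3$-balls bounding the sphere components of the intermediate surfaces.
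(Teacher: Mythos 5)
Your route is genuinely different from the paper's, and it contains a real gap that you yourself flag but do not close. The paper never blows up or projects: it keeps $c_1(\cE)=s$ (not $s+r$), takes the minimal conic bundle $X$ with $s$ spheres inside $\pp(\cE)$, and forms the union $X'$ of $X$ with $r$ plane fibers of $\pp(\cE)\to\pp^1$ lying over real points of $\pp^1$ where the fiber conic has empty real part. Each such fiber is a linearly embedded $\pp^2$, so $X'(\R)\simeq sS^2\sqcup r\,\R\pp^2$ is smooth, and $X'$ is hyperbolic as a union of hyperbolic varieties with respect to a common center. Since $X'\in|2H+rE|$ and this class is very ample, Bertini gives a smooth irreducible small perturbation of $X'$ in its linear system, which is still hyperbolic. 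This ``degenerate and smooth inside a very ample system'' argument avoids every projection-theoretic difficulty; that is precisely what your construction runs into.

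Concretely, the unresolved step in your proposal is the claim that projection from $\operatorname{span}(v_1,\dots,v_r)$ embeds $\Bl_{v_1,\dots,v_r}X_0$ into $\pp^{s+2}$. This is not a routine general-position fact: you must show that $\operatorname{span}(v_1,\dots,v_r)$ meets $X_0$ scheme-theoretically in exactly $\{v_1,\dots,v_r\}$, that no trisecant or tangent--secant line of $X_0$ passes through any $v_a$, and that no secant line joins two of the $v_a$ (note $D.F=2$, so two points in one conic fiber would already contract that fiber; you avoid this by placing the $v_a$ on distinct spheres, but the other conditions remain). The paper only ever carries out a single-point version of this (the additional statement in \Cref{Double}), and even there in a more rigid geometric setting. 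A plausible repair is to argue that the scroll $S$ is cut out by quadrics, so any line meeting $X_0$ in length $\geq3$ lies in $S$, hence in a plane fiber, where it meets $X_0$ in a conic of length $2$ --- and then to iterate the projection one point at a time; but that is genuine additional work, not a remark. Two smaller points also need attention in your limit step: you must verify that $E^{(\infty)}\cap X_0=\{v_1,\dots,v_r\}$ exactly (disjointness of the approximating $E^{(k)}$ from $X_0$ does not by itself control the limit), and when you conclude hyperbolicity of the image you must account for intersections of codimension-$2$ spaces with the images of the exceptional curves, not only with $\pi(X_0\setminus\{v_1,\dots,v_r\})$. As written, the proof is an interesting alternative strategy but is incomplete at exactly the step that carries the main difficulty.
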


\begin{proof}
 Let $a_1,a_2,a_3>0$ such that $s=a_1+a_2+a_3$ and let $\cE=\mathcal{O}_{\pp^1}(a_1)\oplus\mathcal{O}_{\pp^1}(a_2)\oplus\mathcal{O}_{\pp^1}(a_3)$. We denote by $H$ the class of the line bundle $\mathcal{O}_{\pp (\mathcal{E})} (1)$ and by $E$ the class of a fiber of the map $\pp (\mathcal{E}) \rightarrow \pp^1$. For $i=1,2,3$, let $p_i\in\R[u,v]_{2a_i}$ be a binary form of degree $2a_i$ which has only simple, real zeros and assume that the $p_i$ are pairwise coprime. Then $$t=p_1x_1^2+p_2x_2^2+p_3x_3^2$$ is a global section of $\cO_{\pp(\cE)}(2)$. Here the $x_i$'s are the coordinates on the projective plane fibers. After a small perturbation if necessary, the zero set $X$ of $t$ is smooth and irreducible by Bertini's Theorem \cite[Thm.~6.10]{bertini}. It is thus a minimal conic bundle which has by construction $2s$ singular fibers. 
 We also note that $X(\R)$ is homeomorphic to the disjoint union of $s$ spheres. Now let $X'$ be the union of $X$ with $r$ different fibers of $\pp (\mathcal{E}) \rightarrow \pp^1$ all of whose real parts are disjoint from $X(\R)$. This guarantees that $X'(\R)$ is smooth and $X'\subset S(a_1,a_2,a_3)\subset\pp^{s+2}$ is hyperbolic as the union of hyperbolic varieties. The divisor class of $X'$ on $\pp(\cE)$ is $D=2H+rE$. Since $H$ is very ample and $E$ is base-point free, we have that $D$ is also very ample. Thus by Bertini's theorem, a general member of the linear system $|D|$ is smooth and irreducible \cite[Thm.~6.10]{bertini}. In particular, a sufficiently small perturbation of $X'$ in $|D|$ gives a surface with the desired properties.
\end{proof}

\begin{rem}
 The above construction does not work for $s=2$ and $r\geq2$ as in this case we must have $a_1=0$ and $a_2=a_3=1$. The image of $\pp(\cE)$ in $\pp^4$ under the map associated to the linear system $|H|$ is then $S(0,1,1)$, i.e. the cone over a quadratic hypersurface in $\pp^3$. Letting $F\subset\pp(\cE)$ be the preimage of the vertex of $S(0,1,1)$, we find that $X'.F=(2H+rE).F>1$ which implies that the image of $X'$ in $\pp^4$ cannot be smooth.
 We do not know the possible values for $r$ when $s=2$.
\end{rem}

\bigskip

\noindent \textbf{Acknowledgements.}
We would like to thank Kristin Shaw for supporting and encouraging this project and for interesting mathematical discussions. Also, we thank John Christian Ottem for helpful insights into very ampleness criteria.

 \bibliographystyle{alpha}
 \bibliography{biblio}
 \end{document}